\providecommand{\U}[1]{\protect\rule{.1in}{.1in}}
\newtheorem{theorem}{Theorem}[section]
\newtheorem{corollary}[theorem]{Corollary}
\newtheorem{lemma}[theorem]{Lemma}
\newtheorem{proposition}[theorem]{Proposition}
\theoremstyle{definition}
\newtheorem{definition}[theorem]{Definition}
\theoremstyle{remark}
\newtheorem{example}[theorem]{Example}
\newtheorem{remark}[theorem]{Remark}
\renewenvironment{proof}[1][Proof]{\noindent\textbf{#1.} }{\ \rule{0.5em}{0.5em}}
\numberwithin{equation}{section}
\begin{document}

\title{Modified spectral parameter power series representations for solutions of
Sturm-Liouville equations and their applications}
\author{Vladislav V. Kravchenko and Sergii M. Torba\\{\small Departamento de Matem\'{a}ticas, CINVESTAV del IPN, Unidad
Quer\'{e}taro, }\\{\small Libramiento Norponiente No. 2000, Fracc. Real de Juriquilla, }\\{\small Quer\'{e}taro, Qro. C.P. 76230 MEXICO}\\{\small
e-mail: vkravchenko@math.cinvestav.edu.mx}\\
{\small e-mail: storba@math.cinvestav.edu.mx
\thanks{Research was supported by CONACYT,
Mexico via the project 166141.}}}
\maketitle

\begin{abstract}
Spectral parameter power series (SPPS) representations for solutions of
Sturm-Liouville equations proved to be an efficient practical tool for solving
corresponding spectral and scattering problems. They are based on a
computation of recursive integrals, sometimes called formal powers. In this
paper new relations between the formal powers are presented which considerably
improve and extend the application of the SPPS method. For example, originally
the SPPS method at a first step required to construct a nonvanishing (in
general, a complex-valued) particular solution corresponding to the zero-value
of the spectral parameter. The obtained relations remove this limitation.
Additionally, equations with \textquotedblleft nasty\textquotedblright
 Sturm-Liouville coefficients $1/p$ or $r$ can be solved by the SPPS method.

We develop the SPPS representations for solutions of Sturm-Liouville equations
of the form
\[
\left(  p(x)u^{\prime}\right)  ^{\prime}+q(x)u=%
{\displaystyle\sum\limits_{k=1}^{N}}
\lambda^{k}R_{k}\left[  u\right]  ,\quad x\in(a,b)
\]
where $R_{k}\left[  u\right]  :=r_{k}(x)u+s_{k}(x)u^{\prime}$, $k=1,\ldots N$,
the complex-valued functions $p$, $q$, $r_{k}$, $s_{k}$ are continuous on the
finite segment $\left[  a,b\right]  $.

Several numerical examples illustrate the efficiency of the method and its
wide applicability.

\end{abstract}

\section{Introduction}

Solutions of sufficiently regular linear second order Sturm-Liouville
equations considered as functions of a spectral parameter are entire functions
which in particular means that they admit a normally convergent Taylor series
representation in terms of the spectral parameter in the whole complex plane.
The coefficients of the series are functions of the independent variable. For
example, in the simplest case of the equation $y^{\prime\prime}(x)=\lambda
y(x)$ two linearly independent solutions (satisfying in the origin the initial
conditions $(1,0)$, $(0,1)$) can be chosen in the form $y_{1}(x)=\cosh
\sqrt{\lambda}x$ and $y_{2}(x)=\left(  \sinh\sqrt{\lambda}x\right)
/\sqrt{\lambda}$. The Taylor coefficients in their power series in terms of
the spectral parameter $\lambda$ with the center $\lambda=0$ are powers of the
independent variable divided by corresponding factorials $x^{2n}/(2n)!$ and
$x^{2n+1}/(2n+1)!$ respectively.

In \cite{KrCV08} a simple way for calculating the Taylor coefficients for
spectral parameter power series (SPPS) defining solutions of the
Sturm-Liouville equation $(pu^{\prime})^{\prime}+qu=\lambda u$ was proposed,
based on the theory of complex pseudoanalytic functions. In
\cite{KrPorter2010} (see also \cite{APFT}) that result was extended onto
equations of the form
\begin{equation}
(pu^{\prime})^{\prime}+qu=\lambda ru\label{IntroSL}%
\end{equation}
and proved in a simpler way with no need of pseudoanalytic function theory
(see Theorem \ref{ThSolSL} below). The Taylor coefficients in the SPPS
representations are calculated as recursive integrals and called formal
powers. The SPPS representations found numerous applications, see two recent review papers \cite{KKRosu}, \cite{KT Obzor}. In \cite{KKB} SPPS representations were obtained for solutions of
fourth order Sturm-Liouville equations of the form
\begin{equation*}
(  pu^{\prime\prime})^{\prime\prime}+(  qu^{\prime})
^{\prime}=\lambda R\left[  u\right]  \label{SL4OrderR}%
\end{equation*}
where $R$ is a linear differential operator of the order $n\leq3$, and in
\cite{CKT2013} for Bessel-type singular Sturm-Liouville equations. In \cite{KTV} the SPPS representations were obtained for equations of the form
\begin{equation*}
(  p(x)u^{\prime})^{\prime}+q(x)u=\sum_{k=1}^{N}
\lambda^{k}r_{k}(x)u
\end{equation*}
and used for studying spectral problems for Zakharov-Shabat systems.

In \cite{CKT} it was shown that at least in the case of the one-dimensional
Schr\"{o}dinger equation
\begin{equation}
u^{\prime}{}^{\prime}+qu=\lambda u\label{IntroSchr}%
\end{equation}
the formal powers are the images of usual powers $x^{k}$, $k=0,1,2,\ldots$
under the action of a corresponding transmutation operator. In \cite{KrT2013}
based on this observation a new method for solving spectral problems for
(\ref{IntroSchr}) was developed. The method possesses a remarkable unique
feature: it allows one to compute thousands of eigendata with a non-decreasing
accuracy. In \cite{CKR}, \cite{CCK}, \cite{CKT} and \cite{KKTT} methods for
solving different problems for partial differential equations involving the
computation of formal powers were developed.

Thus, the computation of formal powers is required for application of
different methods and in different models. An important restriction for
computing formal powers as proposed in \cite{KrCV08}, \cite{KrPorter2010} and
further publications consisted in the necessity of a nonvanishing particular
solution of the equation
\begin{equation}
(pv^{\prime})^{\prime}+qv=0.\label{IntroSL0}%
\end{equation}
When $p$ and $q$ are real valued (and sufficiently regular) such nonvanishing
solution can be proposed in the form $v_{0}=v_{1}+iv_{2}$ where $v_{1}$ and
$v_{2}$ are arbitrary linearly independent real-valued solutions of
(\ref{IntroSL0}). However for complex-valued coefficients $p$ and $q$ there is
no such simple way for its construction. Moreover, even when $v_{0}$ does not
vanish but in some points is relatively close to zero, the computation of
formal powers may present difficulties.

In the present work we solve two problems. 1) We develop an SPPS
representation which is not limited to nonvanishing particular solutions of
auxiliary equations and admits certain \textquotedblleft
nastiness\textquotedblright\ in the coefficients. For example, $p$ is allowed
to have zeros. 2) We extend the SPPS method onto equations of the form
\begin{equation}
(  p(x)u^{\prime})^{\prime}+q(x)u=\sum_{k=1}^{N}
\lambda^{k}R_{k}\left[  u\right]  ,\quad x\in(a,b)\label{IntroPencil}%
\end{equation}
where $R_{k}\left[  u\right]  :=r_{k}(x)u+s_{k}(x)u^{\prime}$, $k=1,\ldots N$,
the complex-valued functions $p$, $q$, $r_{k}$, $s_{k}$ are continuous on the
finite segment $\left[  a,b\right]  $. The presented numerical results show
that nowadays this is one of the most accurate ways for solving corresponding spectral problems with a wide range of applicability (e.g., few available algorithms are applicable to complex coefficients, complex spectra, polynomial pencils of operators, etc.).

In Section \ref{Section2} we prove new relations concerning formal powers and obtain the
modified SPPS representations for Sturm-Liouville equations of the form
(\ref{IntroSL}). In Section \ref{Section3} we extend this result onto equations of the form
(\ref{IntroPencil}). In Section \ref{Section4} we describe the algorithm and the numerical
implementation of the proposed method for solving spectral problems and give
eight numerical examples illustrating its performance.

\section{SPPS representations}\label{Section2}
\subsection{The original SPPS representation}

In \cite{KrPorter2010} the following theorem was proved.

\begin{theorem}[SPPS representation, \cite{KrPorter2010}] \label{ThSolSL} Assume that on a
finite segment $[a,b]$, equation
\begin{equation}
(pv^{\prime})^{\prime}+qv=0, \label{SL0}%
\end{equation}
possesses a particular solution $f$ such that the functions $f^{2}r$ and
$1/(f^{2}p)$ are continuous on $[a,b]$. Then the general solution of the
equation
\begin{equation}
(pu^{\prime})^{\prime}+qu=\lambda ru \label{SL}%
\end{equation}
on $(a,b)$ has the form
\begin{equation}
u=c_{1}u_{1}+c_{2}u_{2} \label{genmain}%
\end{equation}
where $c_{1}$ and $c_{2}$ are arbitrary complex constants,
\begin{equation}
u_{1}=f\sum_{k=0}^{\infty}
\lambda^{k}\widetilde{X}^{(2k)}\quad\text{and}\quad u_{2}=f\sum_{k=0}^{\infty}
\lambda^{k}X^{(2k+1)} \label{gensol}%
\end{equation}
with $\widetilde{X}^{(n)}$ and $X^{(n)}$ being defined by the recursive
relations $\widetilde{X}^{(-n)}\equiv X^{(-n)}\equiv0$ for $n\in\mathbb{N}$,
\begin{align}
\widetilde{X}^{(0)}& \equiv1,\qquad X^{(0)}\equiv1, \label{Xgen1}\\
\widetilde{X}^{(n)}(x)&=
\begin{cases}
\displaystyle\int_{x_{0}}^{x}
\widetilde{X}^{(n-1)}(s)f^{2}(s)r(s)\,ds, & n \text{ odd,}\smallskip \\
\displaystyle\int_{x_{0}}^{x}
\widetilde{X}^{(n-1)}(s)\dfrac{1}{f^{2}(s)p(s)}\,ds, & n \text{ even,}
\end{cases} \label{Xgen2}\\
X^{(n)}(x)&=
\begin{cases}
\displaystyle\int_{x_{0}}^{x}
X^{(n-1)}(s)\dfrac{1}{f^{2}(s)p(s)}\,ds, & n \text{ odd,}\smallskip \\
\displaystyle\int_{x_{0}}^{x}
X^{(n-1)}(s)f^{2}(s)r(s)\,ds, & n \text{ even},
\end{cases}\label{Xgen3}
\end{align}
where $x_{0}$ is an arbitrary point in $[a,b]$ such that $p$ is continuous at
$x_{0}$ and $p(x_{0})\neq0$. Further, both series in \eqref{gensol} converge
uniformly on $[a,b]$.

The solutions $u_{1}$ and $u_{2}$ satisfy the initial conditions
\begin{align*}
u_{1}(x_{0})&=f(x_{0}),& u_{1}^{\prime}(x_{0})&=f^{\prime}(x_{0}),\\
u_{2}(x_{0})&=0,& u_{2}^{\prime}(x_{0})&=\frac{1}{f(x_{0})p(x_{0})}.
\end{align*}
\end{theorem}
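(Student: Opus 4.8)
The plan is to reduce equation \eqref{SL} to a form admitting a direct power-series-in-$\lambda$ construction via the substitution $u=fv$. Writing $u=fv$ and using that $f$ solves the homogeneous equation \eqref{SL0}, a short computation gives the key identity $(pu')'+qu=\frac{1}{f}\left(pf^{2}v'\right)'$. Hence $u=fv$ solves \eqref{SL} if and only if $v$ solves $\left(pf^{2}v'\right)'=\lambda\,rf^{2}\,v$. Setting $P:=pf^{2}$ and $Q:=rf^{2}$ (both under control by the hypotheses, since $1/P=1/(f^{2}p)$ and $Q=f^{2}r$ are continuous, and continuity of $1/P$ forces $f$ and $p$ to be nonvanishing, so the substitution is legitimate), the problem becomes $(Pv')'=\lambda Qv$, which I invert by two integrations.

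Next I look for $v=\sum_{k=0}^{\infty}\lambda^{k}v_{k}$ and match powers of $\lambda$. This yields $(Pv_{0}')'=0$ together with the recursion $(Pv_{k}')'=Qv_{k-1}$ for $k\ge1$. The zeroth equation has the two independent solutions $v_{0}\equiv1$ (from $Pv_{0}'\equiv0$) and $v_{0}=\int_{x_{0}}^{x}1/P$ (from $Pv_{0}'\equiv1$); these generate, respectively, the even and the odd families. Integrating the recursion twice gives $v_{k}(x)=\int_{x_{0}}^{x}\frac{1}{P}\left(\int_{x_{0}}^{s}Qv_{k-1}\right)ds$, and an induction shows that the choice $v_{0}\equiv1$ reproduces $v_{k}=\widetilde{X}^{(2k)}$, while $v_{0}=\int_{x_{0}}^{x}1/P=X^{(1)}$ reproduces $v_{k}=X^{(2k+1)}$, matching \eqref{Xgen2}--\eqref{Xgen3} exactly; the alternating integrations against $f^{2}r$ and $1/(f^{2}p)$ are precisely what the recursion prescribes. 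This produces $u_{1}=fv^{(1)}$ and $u_{2}=fv^{(2)}$ as in \eqref{gensol}.

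To make this rigorous I need uniform convergence. Let $A:=\max_{[a,b]}|f^{2}r|$ and $B:=\max_{[a,b]}|1/(f^{2}p)|$, finite by continuity. By the nested-integral structure and induction, $|\widetilde{X}^{(2k)}(x)|\le(AB)^{k}(b-a)^{2k}/(2k)!$ and similarly for $X^{(2k+1)}$, so the Weierstrass $M$-test gives uniform convergence on $[a,b]$ (locally uniform in $\lambda$). The same bounds applied to the once-integrated series $\sum_{k}\lambda^{k}Pv_{k}'$ show it converges uniformly as well, which licenses term-by-term differentiation and confirms that the sums satisfy $(Pv')'=\lambda Qv$. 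The initial conditions then follow by evaluating at $x_{0}$: every formal power of positive index vanishes at $x_{0}$, giving $v^{(1)}(x_{0})=1$, $(v^{(1)})'(x_{0})=0$, $v^{(2)}(x_{0})=0$, and $(v^{(2)})'(x_{0})=1/(f^{2}(x_{0})p(x_{0}))$, from which the stated values of $u_{1},u_{1}',u_{2},u_{2}'$ drop out after using $u=fv$ and $u'=f'v+fv'$.

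The main obstacle is the first step: spotting and verifying the exact product form $(pu')'+qu=\frac{1}{f}\left(pf^{2}v'\right)'$, which turns the variable-coefficient operator into one invertible by two plain integrations and is what makes the factorial bookkeeping linear. Everything afterward — the recursion, the convergence estimates, and the initial conditions — is routine provided the two independent choices for $v_{0}$ are tracked correctly.
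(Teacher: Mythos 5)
Your proof is correct. Note that the paper itself contains no proof of Theorem \ref{ThSolSL} --- it is quoted verbatim from \cite{KrPorter2010} --- and your argument is essentially the standard one from that reference: the Polya factorization $(pu^{\prime})^{\prime}+qu=\frac{1}{f}\bigl(pf^{2}v^{\prime}\bigr)^{\prime}$ for $u=fv$ (legitimate since continuity of $1/(f^{2}p)$ forces $f$ and $p$ to be nonvanishing), the two-integration recursion that reproduces \eqref{Xgen2}--\eqref{Xgen3} from the two choices of $v_{0}$, the factorial bounds $(AB)^{k}(b-a)^{2k}/(2k)!$ with the Weierstrass test, and termwise differentiation justified by uniform convergence of the once- and twice-integrated series. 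The only point you leave tacit is that the stated initial conditions give $p\,(u_{1}u_{2}^{\prime}-u_{2}u_{1}^{\prime})(x_{0})=1\neq0$, so $u_{1}$ and $u_{2}$ are linearly independent, which is what entitles one to call \eqref{genmain} the general solution.
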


This result was first obtained in \cite{KrCV08} with the aid of pseudoanalytic
function theory \cite{APFT} and for the case $r\equiv1$. The functions
$\widetilde{X}^{(n)}$ and $X^{(n)}$ are called \emph{formal powers} since they
generalize the usual powers $(x-x_{0})^{n}$ or more precisely $(x-x_{0}%
)^{n}/n!$ (when $f\equiv p\equiv r\equiv1$).

\subsection{Relations between formal powers associated with two different particular solutions}

Now let us suppose additionally that $f(x_{0})=1$ and that together with $f$
there exists another linearly independent solution $g$ of (\ref{SL0})
satisfying the same conditions as $f$ and such that $g(x_{0})=1$. Then one can
construct formal powers corresponding to $g$. Let us denote them by
$\widetilde{Y}^{(n)}$ and $Y^{(n)}$ correspondingly. Thus, $\widetilde
{Y}^{(-n)}\equiv Y^{(-n)}\equiv0$ for $n\in\mathbb{N}$,
\begin{align*}
\widetilde{Y}^{(0)}&\equiv1,\qquad Y^{(0)}\equiv1,\\ 
\widetilde{Y}^{(n)}(x)&=
\begin{cases}
\displaystyle\int_{x_{0}}^{x}
\widetilde{Y}^{(n-1)}(s)g^{2}(s)r(s)\,ds, & n \text{ odd,}\smallskip\\
\displaystyle\int_{x_{0}}^{x}
\widetilde{Y}^{(n-1)}(s)\dfrac{1}{g^{2}(s)p(s)}\,ds, & n \text{ even,}
\end{cases}\\ 
Y^{(n)}(x)&=
\begin{cases}
\displaystyle\int_{x_{0}}^{x}
Y^{(n-1)}(s)\dfrac{1}{g^{2}(s)p(s)}\,ds, & n \text{ odd,}\smallskip \\
\displaystyle\int_{x_{0}}^{x}
Y^{(n-1)}(s)g^{2}(s)r(s)\,ds, & n \text{ even}.
\end{cases}
\end{align*}

Later on we will show that the restrictions imposed on $f$ and $g$ can be
relaxed. At this moment we need them to establish relations between the two
sets of formal powers. Denote $\rho=\frac{1}{p(x_{0})(g^{\prime}%
(x_{0})-f^{\prime}(x_{0}))}$.

\begin{proposition}
\label{Prop Relations Formal Powers} Assume that on a finite interval
$[a,b]$, equation \eqref{SL0} possesses two particular solutions $f$ and
$g$ such that $f(x_{0})=g(x_{0})=1$, $x_{0}$ is an arbitrary point in $[a,b]$
such that $p$ is continuous at $x_{0}$ and $p(x_{0})\neq0$, the functions
$f^{2}r$, $1/(f^{2}p)$, $g^{2}r$ and $1/(g^{2}p)$ are continuous on $[a,b]$.
Then the following relations hold
\begin{align}
gY^{(2k+1)}  &  =fX^{(2k+1)}\label{R1A}\\
&  =\rho\bigl(  g\widetilde{Y}^{(2k)}-f\widetilde{X}^{(2k)}\bigr)
\label{R1B}\\
&  =\rho\bigl(  gX^{(2k)}-fY^{(2k)}\bigr)  , \label{R2}\\
g\widetilde{Y}^{(2k)}& =gX^{(2k)}+\rho\bigl(  g\widetilde{X}^{(2k-1)}%
-f\widetilde{Y}^{(2k-1)}\bigr), \label{R3}\\
f\widetilde{X}^{(2k)}&=fY^{(2k)}+\rho\bigl(  g\widetilde{X}^{(2k-1)}
-f\widetilde{Y}^{(2k-1)}\bigr)  \label{R4}%
\end{align}
for any $k=0,1,2,\ldots$.
\end{proposition}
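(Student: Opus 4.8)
The plan is to pass to generating functions in $\lambda$ and reduce every relation to a statement about the actual solutions produced in Theorem~\ref{ThSolSL}, where uniqueness of the initial value problem does the work. Set $U_{f}:=f\sum_{k}\lambda^{k}\widetilde{X}^{(2k)}$, $V:=f\sum_{k}\lambda^{k}X^{(2k+1)}$ and, analogously, $U_{g}:=g\sum_{k}\lambda^{k}\widetilde{Y}^{(2k)}$, $\widehat{V}:=g\sum_{k}\lambda^{k}Y^{(2k+1)}$. By Theorem~\ref{ThSolSL} these are, for each fixed $x\in[a,b]$, entire functions of $\lambda$ whose power series converge uniformly, and they solve $(pu')'+qu=\lambda ru$ with the initial data stated at $x_{0}$. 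I would also record once and for all that $f$ and $g$ solve \eqref{SL0}, so their Wronskian $p(fg'-gf')$ is constant in $x$; evaluating at $x_{0}$ via $f(x_{0})=g(x_{0})=1$ gives $p(fg'-gf')\equiv 1/\rho$. Matching powers of $\lambda$ is then legitimate everywhere, since two entire functions of $\lambda$ that agree for all $\lambda$ have identical Taylor coefficients.

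The first two relations are then immediate. Both $V$ and $\widehat{V}$ satisfy $u(x_{0})=0$, $u'(x_{0})=1/p(x_{0})$, so $V=\widehat{V}$ by uniqueness, and comparing coefficients of $\lambda^{k}$ yields \eqref{R1A}. Likewise $U_{g}-U_{f}$ solves the same equation with $u(x_{0})=0$ and $u'(x_{0})=g'(x_{0})-f'(x_{0})$, whence $\rho(U_{g}-U_{f})$ has the same initial data as $V$; thus $V=\rho(U_{g}-U_{f})$, and extracting the $\lambda^{k}$-coefficient gives \eqref{R1B}.

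For the remaining relations I would identify the \emph{intermediate} generating functions $\Phi_{f}:=\sum_{k}\lambda^{k}X^{(2k)}$, $\Psi_{f}:=\sum_{k}\lambda^{k}\widetilde{X}^{(2k+1)}$ (and $\Phi_{g},\Psi_{g}$ for $g$) with quasi-derivatives of the solutions. Differentiating the series for $V$ and $U_{f}$ termwise (justified by the uniform convergence of the differentiated SPPS series) and using the recursions \eqref{Xgen2}--\eqref{Xgen3} gives $\Phi_{f}=p\bigl(fV'-f'V\bigr)$ and $\lambda\Psi_{f}=p\bigl(fU_{f}'-f'U_{f}\bigr)$, with the analogous formulas for $g$; the index shift $\sum_{k\ge1}\lambda^{k}\widetilde{X}^{(2k-1)}=\lambda\Psi_{f}$ accounts for the factor $\lambda$. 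Relation \eqref{R2} is now the quasi-derivative counterpart of \eqref{R1A}: substituting these formulas and $p(fg'-gf')\equiv1/\rho$ one computes $\rho\bigl(g\Phi_{f}-f\Phi_{g}\bigr)=\rho\,p\,V\,(fg'-gf')=V$, and reading off $\lambda^{k}$ gives \eqref{R2}. For \eqref{R3} and \eqref{R4} the summed statements read $U_{g}=g\Phi_{f}+\rho\lambda\bigl(g\Psi_{f}-f\Psi_{g}\bigr)$ and $U_{f}=f\Phi_{g}+\rho\lambda\bigl(g\Psi_{f}-f\Psi_{g}\bigr)$; inserting the quasi-derivative formulas together with $V=\rho(U_{g}-U_{f})$ collapses each right-hand side, after cancellation, to a single multiple of the constant Wronskian, returning $U_{g}$ and $U_{f}$ respectively. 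Equating $\lambda^{k}$-coefficients (and using $\widetilde{X}^{(-1)}\equiv\widetilde{Y}^{(-1)}\equiv0$ to handle $k=0$) then yields \eqref{R3} and \eqref{R4}.

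The routine parts are the termwise-differentiation identities and the algebraic cancellations; I expect the main obstacle to be precisely the bookkeeping in the third step — correctly identifying $\Phi_{f}$ and $\Psi_{f}$ as quasi-derivatives with the right index shift, and then tracking which terms cancel so that only the $p(fg'-gf')\equiv1/\rho$ contribution survives. An alternative, purely inductive route (differentiate each claimed identity, reduce it via the recursions to the previous index, and fix the constant of integration at $x_{0}$) also works, but it is messier because the five relations are coupled and must be advanced simultaneously.
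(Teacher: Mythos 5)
Your proposal is correct and follows essentially the same route as the paper: both derive \eqref{R1A} and \eqref{R1B} by matching initial data of the SPPS solutions generated by $f$ and $g$, and both obtain \eqref{R2}--\eqref{R4} by termwise differentiation of those series combined with Liouville's formula $p(fg'-gf')\equiv 1/\rho$. The only distinction is organizational: you package the differentiated series into quasi-derivative generating functions and cancel at the level of whole series before extracting coefficients, whereas the paper equates coefficients of $\lambda^{k}$ first and then performs the same algebra coefficient-by-coefficient.
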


\begin{proof}
Consider two pairs of linearly independent solutions of (\ref{SL}) constructed
according to Theorem \ref{ThSolSL}. One pair is generated by the particular
solution $f$ and has the form (\ref{gensol}) meanwhile the second pair is
generated by $g$ and has the form
\[
v_{1}=g%
{\displaystyle\sum\limits_{k=0}^{\infty}}
\lambda^{k}\widetilde{Y}^{(2k)}\quad\text{and}\quad v_{2}=g%
{\displaystyle\sum\limits_{k=0}^{\infty}}
\lambda^{k}Y^{(2k+1)}.
\]
Due to Theorem \ref{ThSolSL} the solutions $v_{1}$ and $v_{2}$ satisfy the
initial conditions $v_{1}(x_{0})=g(x_{0})$, $v_{1}^{\prime}(x_{0})=g^{\prime
}(x_{0})$, $v_{2}(x_{0})=0$, $v_{2}^{\prime}(x_{0})=\frac{1}{g(x_{0})p(x_{0}%
)}$. Since $f(x_{0})=g(x_{0})=1$, we obtain $u_{2}\equiv v_{2}$. From the
equality of the corresponding series \eqref{gensol} for any value of the parameter $\lambda$
we obtain (\ref{R1A}).

Comparison of the initial conditions gives us also the following relation
\[
v_{1}=u_{1}+\frac{1}{\rho}u_{2}.
\]
Thus,
\[
g%
{\displaystyle\sum\limits_{k=0}^{\infty}}
\lambda^{k}\widetilde{Y}^{(2k)}=f%
{\displaystyle\sum\limits_{k=0}^{\infty}}
\lambda^{k}\widetilde{X}^{(2k)}+\frac{1}{\rho}f%
{\displaystyle\sum\limits_{k=0}^{\infty}}
\lambda^{k}X^{(2k+1)}%
\]
for any $\lambda\in\mathbb{C}$. Hence for any $k=0,1,2,\ldots$ we have
\[
g\widetilde{Y}^{(2k)}=f\left(  \widetilde{X}^{(2k)}+\frac{1}{\rho}%
X^{(2k+1)}\right)
\]
from where (\ref{R1B}) follows.

Consider the equality $u_{2}^{\prime}\equiv v_{2}^{\prime}$. It implies the
equality of the series
\[
f^{\prime}\sum_{k=0}^{\infty}\lambda^{k}X^{(2k+1)}+\frac{1}{fp}
\sum_{k=0}^{\infty}
\lambda^{k}X^{(2k)}=g^{\prime}\sum_{k=0}^{\infty}
\lambda^{k}Y^{(2k+1)}+\frac{1}{gp}\sum_{k=0}^{\infty}
\lambda^{k}Y^{(2k)}
\]
and hence
\[
f^{\prime}X^{(2k+1)}+\frac{1}{fp}X^{(2k)}=g^{\prime}Y^{(2k+1)}+\frac{1}%
{gp}Y^{(2k)}%
\]
for any $k=0,1,2,\ldots$. From (\ref{R1A}) we have $g^{\prime}Y^{(2k+1)}%
=\frac{g^{\prime}}{g}fX^{(2k+1)}$ and consequently,
\[
\left(  f^{\prime}-\frac{g^{\prime}}{g}f\right)  X^{(2k+1)}=\frac{1}{p}\left(
\frac{1}{g}Y^{(2k)}-\frac{1}{f}X^{(2k)}\right)  .
\]
Notice that by Liouville's formula for the Wronskian
\begin{equation}
g^{\prime}f-gf^{\prime}=W(f,g)=\frac{p(x_0)}{p}W(f,g)(x_0)=\frac{1}{\rho p}. \label{W[f,g]}%
\end{equation}
Then
\[
\frac{1}{\rho}X^{(2k+1)}=\frac{g}{f}X^{(2k)}-Y^{(2k)}%
\]
from where we obtain (\ref{R2}).

Consider the equality $v_{1}^{\prime}=u_{1}^{\prime}+\frac{1}{\rho}%
u_{2}^{\prime}$. It can be written in the form
\begin{align*}
g^{\prime}%
\sum_{k=0}^{\infty}
\lambda^{k}\widetilde{Y}^{(2k)}+\frac{1}{gp}\sum_{k=1}^{\infty}
\lambda^{k}\widetilde{Y}^{(2k-1)}  &  =f^{\prime}
\sum_{k=0}^{\infty}
\lambda^{k}\widetilde{X}^{(2k)}+\frac{1}{fp}\sum_{k=1}^{\infty}
\lambda^{k}\widetilde{X}^{(2k-1)}\\
&\qquad  +\frac{1}{\rho}\left(  f^{\prime}\sum_{k=0}^{\infty}
\lambda^{k}X^{(2k+1)}+\frac{1}{fp}\sum_{k=0}^{\infty}
\lambda^{k}X^{(2k)}\right)
\end{align*}
which leads to the equality%
\[
g^{\prime}\widetilde{Y}^{(2k)}+\frac{1}{gp}\widetilde{Y}^{(2k-1)}=f^{\prime
}\widetilde{X}^{(2k)}+\frac{1}{fp}\widetilde{X}^{(2k-1)}+\frac{1}{\rho}\left(
f^{\prime}X^{(2k+1)}+\frac{1}{fp}X^{(2k)}\right)
\]
for any $k=0,1,2,\ldots$. Using (\ref{R1B}) we obtain%
\[
g^{\prime}\widetilde{Y}^{(2k)}+\frac{1}{gp}\widetilde{Y}^{(2k-1)}=\frac{1}%
{fp}\widetilde{X}^{(2k-1)}+\frac{f^{\prime}g}{f}\widetilde{Y}^{(2k)}+\frac
{1}{\rho fp}X^{(2k)}.
\]
Thus,
\[
\left(  g^{\prime}-\frac{f^{\prime}}{f}g\right)  \widetilde{Y}^{(2k)}=\frac
{1}{p}\left(  \frac{1}{f}\widetilde{X}^{(2k-1)}-\frac{1}{g}\widetilde
{Y}^{(2k-1)}+\frac{1}{\rho f}X^{(2k)}\right)  ,
\]
and taking into account (\ref{W[f,g]}) we arrive at (\ref{R3}). Finally,
(\ref{R4}) is the same (\ref{R3}) where $g$ plays the role of $f$ and vice versa.
\end{proof}

\subsection{Modified SPPS representation}
The relations between formal powers established in Proposition
\ref{Prop Relations Formal Powers} suggest another way for defining the formal
powers and formulating the SPPS representations for solutions of the
Sturm-Liouville equation.

\begin{definition}
\label{Def Systems Fn}Let equation \eqref{SL0} admit two linearly independent
solutions $f$ and $g$ such that $\left\{  f,\,g,\,pf^{\prime},\,pg^{\prime
}\right\}  \subset C^{1}[a,b]$ and $f(x_{0})=g(x_{0})=1$ where $x_{0}$ is any
point of $[a,b]$ such that $p(x_{0})\neq0$. Then the following systems of
functions $\{F_{n}\}$, $\{\widetilde{F}_{n}\}$,
$\{G_{n}\}$, $\{\widetilde{G}_{n}\}$ are defined
recursively as follows
\begin{align}
F_{-n}&\equiv G_{-n}\equiv\widetilde{F}_{-n}\equiv\widetilde{G}_{-n}
\equiv0\qquad\text{for }n\in\mathbb{N}, \label{D-}\\
F_{0}&\equiv G_{0}\equiv1, \qquad\widetilde{F}_{0}\equiv f, \qquad
\widetilde{G}_{0}\equiv g, \label{D0}
\end{align}
for an odd $n$:
\begin{align}
F_{n}&=G_{n}=\rho\left(  gF_{n-1}-fG_{n-1}\right)  , \label{D2}\\
\widetilde{F}_{n}(x)&=\int_{x_{0}}^{x}
\widetilde{F}_{n-1}(s)f(s)r(s)\,ds, \label{D3}\\
\widetilde{G}_{n}(x)&=\int_{x_{0}}^{x}
\widetilde{G}_{n-1}(s)g(s)r(s)\,ds, \label{D4}
\end{align}
and for an even $n$:
\begin{align}
F_{n}(x)&=\int_{x_{0}}^{x}
F_{n-1}(s)f(s)r(s)\,ds, \label{D5}\\
G_{n}(x)&=\int_{x_{0}}^{x}
G_{n-1}(s)g(s)r(s)\,ds, \label{D6}\\
\widetilde{F}_{n}&=fG_{n}-\rho\bigl(  f\widetilde{G}_{n-1}-g\widetilde{F}
_{n-1}\bigr), \label{D7}\\
\widetilde{G}_{n}&=gF_{n}-\rho\bigl(  f\widetilde{G}_{n-1}-g\widetilde{F}%
_{n-1}\bigr). \label{D8}
\end{align}
\end{definition}

Notice that from (\ref{D7}) and (\ref{D8}) we have that
\begin{equation*}
\widetilde{G}_{2n}-\widetilde{F}_{2n}=gF_{2n}-fG_{2n} \label{D9}%
\end{equation*}
and hence from (\ref{D2}) we obtain the relation
\begin{equation}
F_{2n+1}=G_{2n+1}=\rho\bigl(  \widetilde{G}_{2n}-\widetilde{F}_{2n}\bigr).\label{D10}
\end{equation}

\begin{remark}
\label{Rem Relations Old and New}It is easy to see that when additionally the
function $1/(f^{2}p)$ is continuous on $[a,b]$ and hence the systems of
functions $\{X^{(n)}\}$, $\{\widetilde{X}^{(n)}\}$ can be constructed, the following relations hold
\[
F_{n}=fX^{(n)}\quad\text{and}\quad\widetilde{F}_{n}=\widetilde{X}^{(n)}%
\qquad\text{for an odd }n
\]
and
\[
F_{n}=X^{(n)}\quad\text{and}\quad\widetilde{F}_{n}=f\widetilde{X}^{(n)}%
\qquad\text{for an even }n.
\]
\end{remark}

In the following lemma we prove several properties of the introduced functions.

\begin{lemma}
\label{Lemma Derivatives}For the functions defined by Definition
\ref{Def Systems Fn} the following relations hold.

For an odd $n$:
\begin{gather}
F_{n}^{\prime}=G_{n}^{\prime}=\rho\left(  g^{\prime}F_{n-1}-f^{\prime}%
G_{n-1}\right)  , \label{Fprime}\\
\left(  pF_{n}^{\prime}\right)  ^{\prime}+qF_{n}=rF_{n-2}, \label{Fbiprime}\\
\left(  pG_{n}^{\prime}\right)  ^{\prime}+qG_{n}=rG_{n-2}, \label{Gbiprime}
\end{gather}
and for an even $n$:
\begin{gather}
\widetilde{F}_{n}^{\prime}=f^{\prime}G_{n}-\rho\bigl(  f^{\prime}\widetilde
{G}_{n-1}-g^{\prime}\widetilde{F}_{n-1}\bigr), \label{Ftilprime}\\
\widetilde{G}_{n}^{\prime}=g^{\prime}F_{n}-\rho\bigl(  f^{\prime}\widetilde
{G}_{n-1}-g^{\prime}\widetilde{F}_{n-1}\bigr), \label{Gtilprime}\\
\bigl(  p\widetilde{F}_{n}^{\prime}\bigr)^{\prime}+q\widetilde{F}%
_{n}=r\widetilde{F}_{n-2}, \label{Ftilbiprime}\\
\bigl(  p\widetilde{G}_{n}^{\prime}\bigr)^{\prime}+q\widetilde{G}%
_{n}=r\widetilde{G}_{n-2}. \label{Gtilbiprime}%
\end{gather}
\end{lemma}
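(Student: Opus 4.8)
The plan is to prove the first-order relations \eqref{Fprime}, \eqref{Ftilprime}, \eqref{Gtilprime} first and then substitute them into the second-order relations \eqref{Fbiprime}, \eqref{Gbiprime}, \eqref{Ftilbiprime}, \eqref{Gtilbiprime}. Three ingredients are used throughout: the fact that $f$ and $g$ solve \eqref{SL0}, so that $(pf')'=-qf$ and $(pg')'=-qg$; the Wronskian identity \eqref{W[f,g]} in the form $g'f-gf'=1/(\rho p)$; and the derivatives of the members that are defined by integration, namely $F_n'=F_{n-1}fr$, $G_n'=G_{n-1}gr$ for even $n$ (from \eqref{D5}, \eqref{D6}) and $\widetilde{F}_n'=\widetilde{F}_{n-1}fr$, $\widetilde{G}_n'=\widetilde{G}_{n-1}gr$ for odd $n$ (from \eqref{D3}, \eqref{D4}). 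The hypothesis $\{f,g,pf',pg'\}\subset C^1[a,b]$ guarantees that all functions produced are smooth enough for the differentiations below.

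To obtain \eqref{Fprime} for odd $n$, I differentiate \eqref{D2} by the product rule, which gives $F_n'=\rho(g'F_{n-1}-f'G_{n-1})+\rho(gF_{n-1}'-fG_{n-1}')$; it then suffices to show that the second group vanishes. Since $n-1$ is even, substituting $F_{n-1}'=F_{n-2}fr$ and $G_{n-1}'=G_{n-2}gr$ turns it into $fgr(F_{n-2}-G_{n-2})$, which is zero because $F_{n-2}=G_{n-2}$ for the odd index $n-2$ by \eqref{D2}. The even-$n$ relations \eqref{Ftilprime} and \eqref{Gtilprime} are handled the same way: differentiating \eqref{D7} and \eqref{D8} leaves the extra contributions $fG_n'-\rho(f\widetilde{G}_{n-1}'-g\widetilde{F}_{n-1}')$ and $gF_n'-\rho(f\widetilde{G}_{n-1}'-g\widetilde{F}_{n-1}')$; inserting $G_n'=G_{n-1}gr$, $F_n'=F_{n-1}fr$ and the odd-index derivatives of $\widetilde{F}_{n-1},\widetilde{G}_{n-1}$, each collapses to an expression of the form $fgr\bigl(G_{n-1}-\rho(\widetilde{G}_{n-2}-\widetilde{F}_{n-2})\bigr)$ (using $F_{n-1}=G_{n-1}$ for \eqref{Gtilprime}), which vanishes by relation \eqref{D10}.

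For \eqref{Fbiprime} and \eqref{Gbiprime} I start from \eqref{Fprime}, multiply by $p$, differentiate once more, and replace $(pf')'$, $(pg')'$ by $-qf$, $-qg$. After adding $qF_n$ and using \eqref{D2}, all terms carrying an undifferentiated $F_{n-1}$ or $G_{n-1}$ cancel, leaving $\rho(pg'F_{n-1}'-pf'G_{n-1}')$. Substituting $F_{n-1}'=F_{n-2}fr$, $G_{n-1}'=G_{n-2}gr$ and $F_{n-2}=G_{n-2}$ converts this into $\rho\,prF_{n-2}(g'f-gf')$, whereupon \eqref{W[f,g]} reduces the last factor to $1/(\rho p)$ and yields exactly $rF_{n-2}$; the case \eqref{Gbiprime} is identical since $F_n=G_n$ and $F_{n-2}=G_{n-2}$. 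The tilde identities \eqref{Ftilbiprime}, \eqref{Gtilbiprime} follow by the same scheme from \eqref{Ftilprime}, \eqref{Gtilprime}: after differentiating $p\widetilde{F}_n'$, replacing $(pf')',(pg')'$ as above and adding $q\widetilde{F}_n$, the terms proportional to $G_n$, $\widetilde{G}_{n-1}$ and $\widetilde{F}_{n-1}$ cancel; then using the integral derivatives of $G_n,\widetilde{F}_{n-1},\widetilde{G}_{n-1}$ together with \eqref{D10} to eliminate $G_{n-1}$ again exposes the combination $g'f-gf'$, which by \eqref{W[f,g]} produces $r\widetilde{F}_{n-2}$.

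The product-rule differentiations are routine; the one place demanding care is the biprime computation, where the main obstacle is organizing the cancellations so that precisely the combination $g'f-gf'$ is exposed and then invoking \eqref{W[f,g]} (together with \eqref{D2} or \eqref{D10}) at exactly the right moment to collapse the expression to a single formal power. Because Definition \ref{Def Systems Fn} interlocks odd and even indices, I would present each relation as one direct computation valid for the relevant parity of $n$, reading the needed lower-index facts ($F_{n-2}=G_{n-2}$ and relation \eqref{D10}) off the definitions rather than setting up a separate formal induction.
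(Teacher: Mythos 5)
Your proof is correct and follows essentially the same route as the paper's: differentiate the defining relations, use the integral formulas \eqref{D3}--\eqref{D6} for the derivatives of the integrated members, invoke $F_{n}=G_{n}$ (odd $n$) and \eqref{D10} to kill the extra terms, and then apply $(pf')'=-qf$, $(pg')'=-qg$ together with the Wronskian identity \eqref{W[f,g]} to collapse the second-order expressions to $rF_{n-2}$ and $r\widetilde{F}_{n-2}$. The paper likewise treats each parity by a direct computation with no separate induction, so there is nothing to add.
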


\begin{proof}
Let $n$ be odd. Then from (\ref{D2}), (\ref{D5}) and (\ref{D6}) we have
$F_{n}^{\prime}=\rho(  g^{\prime}F_{n-1}-f^{\prime}G_{n-1})  +\rho
fgr(  F_{n-2}-G_{n-2})  $. Due to (\ref{D2}) the difference in the
last brackets equals zero and hence (\ref{Fprime}) holds.

Consider $\left(  pF_{n}^{\prime}\right)  ^{\prime}=\rho\left(  \left(
pg^{\prime}\right)  ^{\prime}F_{n-1}-\left(  pf^{\prime}\right)  ^{\prime
}G_{n-1}+pr\left(  g^{\prime}fF_{n-2}-f^{\prime}gG_{n-2}\right)  \right)  $.
Now from (\ref{D2}), (\ref{W[f,g]}) and the fact that $f$ and $g$ are
solutions of (\ref{SL0}) we obtain $\left(  pF_{n}^{\prime}\right)  ^{\prime
}=-q\rho\left(  gF_{n-1}-fG_{n-1}\right)  +rF_{n-2}$ and hence (\ref{Fbiprime}%
). Equality (\ref{Gbiprime}) is proved similarly.

Let $n$ be even. Differentiating (\ref{D7}) and using (\ref{D3}), (\ref{D4})
and (\ref{D6}) we obtain%
\[
\widetilde{F}_{n}^{\prime}=f^{\prime}G_{n}+fgrG_{n-1}-\rho\bigl(  f^{\prime
}\widetilde{G}_{n-1}-g^{\prime}\widetilde{F}_{n-1}\bigr)  -\rho fgr\bigl(
\widetilde{G}_{n-2}-\widetilde{F}_{n-2}\bigr).
\]
Now using (\ref{D10}) we obtain (\ref{Ftilprime}). Equality (\ref{Gtilprime})
is proved analogously. Consider
\begin{align*}
\bigl(  p\widetilde{F}_{n}^{\prime}\bigr)^{\prime}  &  =(  pf^{\prime
})^{\prime}G_{n}+pf^{\prime}grG_{n-1}-\rho\bigl(\left(  pf^{\prime
}\right)  ^{\prime}\widetilde{G}_{n-1}-\left(  pg^{\prime}\right)  ^{\prime
}\widetilde{F}_{n-1}\bigr)-\rho\bigl(  pf^{\prime}\widetilde{G}_{n-1}^{\prime}-pg^{\prime}%
\widetilde{F}_{n-1}^{\prime}\bigr) \\
&  =-qfG_{n}+pf^{\prime}grG_{n-1}-q\rho\bigl(  g\widetilde{F}_{n-1}%
-f\widetilde{G}_{n-1}\bigr)  -\rho pr\bigl(  f^{\prime}g\widetilde{G}%
_{n-2}-g^{\prime}f\widetilde{F}_{n-2}\bigr) \\
&  =-q\widetilde{F}_{n}+\rho pr\bigl(  f^{\prime}g\bigl(  \widetilde{G}%
_{n-2}-\widetilde{F}_{n-2}\bigr)  -\bigl(  f^{\prime}g\widetilde{G}%
_{n-2}-g^{\prime}f\widetilde{F}_{n-2}\bigr)  \bigr) \\
&  =-q\widetilde{F}_{n}+r\widetilde{F}_{n-2}.
\end{align*}
Thus, (\ref{Ftilbiprime}) is true. Equality (\ref{Gtilbiprime}) is proved analogously.
\end{proof}

\begin{lemma}\label{Lemma Estimates}For the functions defined by Definition
\ref{Def Systems Fn} the following inequalities hold.
\begin{align}
&\left\vert F_{2k}(x)\right\vert \leq a_{2k}(c_{1}c_{2}c_{3})^{k}\left\vert
x-x_{0}\right\vert ^{k},& &\left\vert G_{2k}(x)\right\vert \leq a_{2k}%
(c_{1}c_{2}c_{3})^{k}\left\vert x-x_{0}\right\vert ^{k}, \label{F2k}\\
&\left\vert F_{2k+1}(x)\right\vert \leq a_{2k+1}c_{1}c_{3}(c_{1}c_{2}c_{3}
)^{k}\left\vert x-x_{0}\right\vert ^{k},& &\left\vert G_{2k+1}(x)\right\vert
\leq a_{2k+1}c_{1}c_{3}(c_{1}c_{2}c_{3})^{k}\left\vert x-x_{0}\right\vert
^{k}, \label{F2k+1}\\
&\vert \widetilde{F}_{2k}(x)\vert \leq b_{2k}c_{3}(c_{1}c_{2}%
c_{3})^{k}\left\vert x-x_{0}\right\vert ^{k},& &\vert \widetilde
{G}_{2k}(x)\vert \leq b_{2k}c_{3}(c_{1}c_{2}c_{3})^{k}\left\vert
x-x_{0}\right\vert ^{k}, \label{Ftil2k}\\
&\vert \widetilde{F}_{2k+1}(x)\vert \leq b_{2k+1}c_{2}c_{3}%
(c_{1}c_{2}c_{3})^{k}\left\vert x-x_{0}\right\vert ^{k+1},& &\vert
\widetilde{G}_{2k+1}(x)\vert \leq b_{2k+1}c_{2}c_{3}(c_{1}c_{2}%
c_{3})^{k}\left\vert x-x_{0}\right\vert ^{k+1}, \label{Ftil2k+1}%
\end{align}
where $c_{1}=\left\vert \rho\right\vert $, $c_{2}=\max\left(  \max
_{x\in\lbrack a,b]}\left\vert fr\right\vert ,\max_{x\in\lbrack a,b]}\left\vert
gr\right\vert \right)  $, $c_{3}=\max\left(  \max_{x\in\lbrack a,b]}\left\vert
f\right\vert ,\max_{x\in\lbrack a,b]}\left\vert g\right\vert \right)  $,
$a_{2k}=\frac{2^{k}}{k!}$, $a_{2k+1}=\frac{2^{k+1}}{k!}$, $b_{2k}=\frac
{2^{k}(k+1)}{k!}$, $b_{2k+1}=\frac{2^{k}}{k!}$, $k=0,1,\ldots$.
\end{lemma}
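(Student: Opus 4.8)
The strategy is a straightforward induction on $k$, using the recursive definitions from Definition \ref{Def Systems Fn} to pass the bounds up the ladder. Because the functions split naturally into the ``$F,G$'' pair and the ``$\widetilde F,\widetilde G$'' pair, and because each parity ($n$ even vs.\ odd) is governed by a different relation, the induction will interleave the four estimates. The base cases are immediate from \eqref{D0}: $|F_0|=|G_0|=1\le a_0=1$, while $|\widetilde F_0|=|f|\le c_3=b_0 c_3$ and $|\widetilde G_0|=|g|\le c_3$, matching \eqref{F2k} and \eqref{Ftil2k} at $k=0$. The constants $c_2,c_3$ are designed exactly so that multiplying by $f r$ or $g r$ contributes a factor $c_2$, and multiplying by $f$ or $g$ alone contributes a factor $c_3$, under the integral.

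\textbf{The inductive step for $F,G$.} First I would treat the even index $F_{2k}$ via \eqref{D5}: since $F_{2k}(x)=\int_{x_0}^{x}F_{2k-1}(s)f(s)r(s)\,ds$, I bound $|F_{2k-1}|$ by the odd estimate \eqref{F2k+1} (with $k$ replaced by $k-1$) and $|fr|$ by $c_2$, then integrate $|s-x_0|^{k-1}$ to gain one power of $|x-x_0|$ and a factor $1/k$. The combinatorial bookkeeping reduces to checking $a_{2k-1}c_1 c_3\cdot c_2\cdot\frac1k=a_{2k}(c_1c_2c_3)$, i.e.\ $a_{2k}=a_{2k-1}/k$, which holds since $a_{2k-1}=2^{k}/(k-1)!$ and $a_{2k}=2^{k}/k!$. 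For the odd index $F_{2k+1}=G_{2k+1}$ I use \eqref{D10}, $F_{2k+1}=\rho(\widetilde G_{2k}-\widetilde F_{2k})$, so $|F_{2k+1}|\le |\rho|(|\widetilde G_{2k}|+|\widetilde F_{2k}|)\le c_1\cdot 2 b_{2k}c_3(c_1c_2c_3)^k|x-x_0|^k$; this is where the factor $2$ in the $a_{2k+1}$ and $b_{2k}$ constants comes from, and one verifies $2 c_1 b_{2k}c_3\le a_{2k+1}c_1c_3$ reduces to $2b_{2k}\le a_{2k+1}$, i.e.\ $a_{2k+1}=2^{k+1}/k!=2\cdot 2^{k}/k!$ against $2b_{2k}=2\cdot 2^{k}(k+1)/k!$—so I should double-check the direction of this inequality, since it appears $b_{2k}\ge a_{2k+1}/2$, meaning the cruder bound actually goes the wrong way and the estimate must instead be tightened using cancellation in $\widetilde G_{2k}-\widetilde F_{2k}$ rather than the triangle inequality.

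\textbf{The main obstacle.} The delicate point, and the part I expect to require genuine care rather than routine estimation, is precisely the odd-index bound for $F_{2k+1}$: the naive triangle-inequality bound on $\widetilde G_{2k}-\widetilde F_{2k}$ is too lossy to close the induction with the stated constants. I would instead use the representation \eqref{D2}, $F_{2k+1}=\rho(gF_{2k}-fG_{2k})$, bounding $|gF_{2k}-fG_{2k}|\le c_3(|F_{2k}|+|G_{2k}|)\le 2 c_3 a_{2k}(c_1c_2c_3)^k|x-x_0|^k$, giving $|F_{2k+1}|\le c_1\cdot 2c_3 a_{2k}(c_1c_2c_3)^k|x-x_0|^k=2 a_{2k}\,c_1 c_3(c_1c_2c_3)^k|x-x_0|^k$, and then checking $2a_{2k}=a_{2k+1}$, which does hold since $2\cdot 2^{k}/k!=2^{k+1}/k!$. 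This route through \eqref{D2} rather than \eqref{D10} is the correct one and sidesteps the obstacle cleanly.

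\textbf{The inductive step for $\widetilde F,\widetilde G$.} For the odd index I use \eqref{D3}, $\widetilde F_{2k+1}=\int_{x_0}^x\widetilde F_{2k}(s)f(s)r(s)\,ds$, bounding $|\widetilde F_{2k}|$ by \eqref{Ftil2k} and $|fr|$ by $c_2$, integrating to produce the extra power $|x-x_0|^{k+1}$ and factor $1/(k+1)$; the constant check is $b_{2k}c_3\cdot c_2\cdot\frac{1}{k+1}=b_{2k+1}c_2c_3$, i.e.\ $b_{2k+1}=b_{2k}/(k+1)=2^{k}(k+1)/k!\cdot\frac{1}{k+1}=2^{k}/k!$, which matches. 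For the even index I use \eqref{D7}, $\widetilde F_{2k}=fG_{2k}-\rho(f\widetilde G_{2k-1}-g\widetilde F_{2k-1})$, so $|\widetilde F_{2k}|\le c_3|G_{2k}|+c_1 c_3(|\widetilde G_{2k-1}|+|\widetilde F_{2k-1}|)$; inserting \eqref{F2k} and \eqref{Ftil2k+1} (at $k-1$) gives a sum of terms each of the form $(\text{const})c_3(c_1c_2c_3)^k|x-x_0|^k$, and the final bookkeeping $a_{2k}+2 c_1 c_2 c_3\cdot b_{2k-1}\cdot(\text{appropriate powers})\le b_{2k}$ must be arranged so the $(k+1)$ in $b_{2k}=2^{k}(k+1)/k!$ absorbs both contributions—this is the one spot where I would track the powers of $|x-x_0|$ most carefully, since the $\widetilde G_{2k-1},\widetilde F_{2k-1}$ terms already carry $|x-x_0|^{k}$ from their own estimates while $fG_{2k}$ carries only $|x-x_0|^{k}$ as well, so the powers align and only the numerical constants need reconciling. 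The symmetric estimates for $\widetilde G$ follow by interchanging the roles of $f$ and $g$, as in the proof of \eqref{R4}.
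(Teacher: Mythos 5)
Your proof is correct and follows essentially the same route as the paper's: induction using \eqref{D2} for the odd-index $F_{2k+1}$ (giving $2a_{2k}=a_{2k+1}$), \eqref{D5} for the even-index $F_{2k}$, \eqref{D3} for $\widetilde F_{2k+1}$, and \eqref{D7} combined with the already-established bound \eqref{F2k} for $\widetilde F_{2k}$, closed by the identity $b_{2k}=a_{2k}+2b_{2k-1}$. Your self-correction away from the lossy triangle-inequality bound via \eqref{D10} lands you exactly on the paper's argument, so there is nothing to add.
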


\begin{proof}
For $k=0$ all the inequalities are easily verified. Next, we assume that both
inequalities (\ref{F2k}) are true for some $k\in\mathbb{N}$ and consider
\begin{align*}
\left\vert F_{2k+1}(x)\right\vert  &  =\left\vert G_{2k+1}(x)\right\vert
=\left\vert \rho\left(  g(x)F_{2k}(x)-f(x)G_{2k}(x)\right)  \right\vert
\leq2a_{2k}c_{1}c_{3}(c_{1}c_{2}c_{3})^{k}\left\vert x-x_{0}\right\vert ^{k}\\
&  =a_{2k+1}c_{1}c_{3}(c_{1}c_{2}c_{3})^{k}\left\vert x-x_{0}\right\vert ^{k}.
\end{align*}
Hence
\[
\left\vert F_{2k+2}(x)\right\vert \leq\frac{a_{2k+1}}{k+1}(c_{1}c_{2}%
c_{3})^{k+1}\left\vert x-x_{0}\right\vert ^{k+1}=a_{2k+2}(c_{1}c_{2}%
c_{3})^{k+1}\left\vert x-x_{0}\right\vert ^{k+1}.
\]
Thus, (\ref{F2k}) and (\ref{F2k+1}) are proved.

Now, suppose that (\ref{Ftil2k}) hold for some $k\in\mathbb{N}$. Then
\[
\vert \widetilde{F}_{2k+1}(x)\vert \leq b_{2k}c_{2}c_{3}(c_{1}%
c_{2}c_{3})^{k}\frac{\left\vert x-x_{0}\right\vert ^{k+1}}{k+1}=b_{2k+1}%
c_{2}c_{3}(c_{1}c_{2}c_{3})^{k}\left\vert x-x_{0}\right\vert ^{k+1}.
\]
Consequently,
\begin{align*}
\vert \widetilde{F}_{2k+2}(x)\vert  &  =\left\vert f(x)G_{2k+2}%
(x)+\rho\bigl(  g(x)\widetilde{F}_{2k+1}(x)-f(x)\widetilde{G}_{2k+1}%
(x)\bigr)  \right\vert \\
&  \leq a_{2k+2}c_{3}(c_{1}c_{2}c_{3})^{k+1}\left\vert x-x_{0}\right\vert
^{k+1}+2b_{2k+1}c_{1}c_{2}c_{3}^{2}(c_{1}c_{2}c_{3})^{k}\left\vert
x-x_{0}\right\vert ^{k+1}.
\end{align*}
Notice that $b_{2k+2}=a_{2k+2}+2b_{2k+1}$ and hence $\vert \widetilde
{F}_{2k+2}(x)\vert \leq b_{2k+2}c_{3}(c_{1}c_{2}c_{3})^{k+1}\left\vert
x-x_{0}\right\vert ^{k+1}$. Thus, (\ref{Ftil2k}) and (\ref{Ftil2k+1}) are proved.
\end{proof}

Now we are in a position to prove the SPPS representations for solutions of
(\ref{SL}) in terms of the formal powers from Definition \ref{Def Systems Fn}.

\begin{theorem}[Modified SPPS representations]\label{Th Modified SPPS} Let $p$ and $q$ be
such that there exist two linearly independent solutions $f$ and $g$ of
equation \eqref{SL0} such that $\left\{  f,\,g,\,pf^{\prime},\,pg^{\prime
}\right\}  \subset C^{1}[a,b]$ and $f(x_{0})=g(x_{0})=1$ where $x_{0}$ is any
point of $[a,b]$ such that $p(x_{0})\neq0$. Let $r$ be such that $\left\{
fr,\,gr\right\}  \subset C[a,b]$. Then the general solution of \eqref{SL} on
$(a,b)$ has the form \eqref{genmain} where
\begin{equation}
u_{1}=\sum_{k=0}^{\infty}
\lambda^{k}\widetilde{F}_{2k}\quad\text{and}\quad u_{2}=\sum_{k=0}^{\infty}
\lambda^{k}F_{2k+1}. \label{u1u2}%
\end{equation}
The derivatives of $u_{1}$ and $u_{2}$ have the form%
\begin{equation}
pu_{1}^{\prime}=pf^{\prime}+\sum_{k=1}^{\infty}
\lambda^{k}\left(  pf^{\prime}G_{2k}-\rho\bigl(  pf^{\prime}\widetilde{G}%
_{2k-1}-pg^{\prime}\widetilde{F}_{2k-1}\bigr)  \right)  \label{u1prime}%
\end{equation}
and
\begin{equation}
pu_{2}^{\prime}=\rho\sum_{k=0}^{\infty}
\lambda^{k}\left(  pg^{\prime}F_{2k}-pf^{\prime}G_{2k}\right)  . \label{u2prime}%
\end{equation}
All series in \eqref{u1u2}--\eqref{u2prime} converge uniformly on $[a,b]$. The
solutions $u_{1}$ and $u_{2}$ satisfy the initial conditions
\begin{equation}
u_{1}(x_{0})=1,\quad u_{1}^{\prime}(x_{0})=f^{\prime}(x_{0}),\quad u_{2}%
(x_{0})=0,\quad u_{2}^{\prime}(x_{0})=\frac{1}{p(x_{0})}. \label{initial}%
\end{equation}
\end{theorem}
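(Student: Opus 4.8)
The plan is to establish the series representations by verifying three things: that the functions $u_1,u_2$ in \eqref{u1u2} solve \eqref{SL}, that the series converge well enough to justify term-by-term differentiation, and that the initial conditions \eqref{initial} hold. I would begin with convergence, since it is the prerequisite for everything else. Lemma \ref{Lemma Estimates} gives explicit bounds on $|\widetilde{F}_{2k}|$ and $|F_{2k+1}|$ that decay like $(c_1c_2c_3)^k|x-x_0|^k/k!$ (up to the combinatorial prefactors $b_{2k}$, $a_{2k+1}$). Multiplying by $|\lambda|^k$ and summing, the series for $u_1$ and $u_2$ are dominated on $[a,b]$ by convergent exponential-type majorants $\sum_k C|\lambda|^k M^k/k!$, uniformly in $x$; hence both series in \eqref{u1u2} converge uniformly and absolutely on $[a,b]$ for every fixed $\lambda$. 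The same Lemma, applied to the bounds on $\widetilde{G}_{2k-1}$, $\widetilde{F}_{2k-1}$, $F_{2k}$, $G_{2k}$ together with the continuity of $pf'$, $pg'$ (which are bounded on $[a,b]$ since they lie in $C^1[a,b]$), shows that the formally differentiated series \eqref{u1prime} and \eqref{u2prime} are likewise uniformly convergent.

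Next I would justify that these differentiated series are genuinely the derivatives. The natural tool is the standard theorem that if $\sum g_k$ converges at one point and $\sum g_k'$ converges uniformly, then $\sum g_k$ is differentiable with derivative $\sum g_k'$. I apply this twice. For $u_2 = \sum_k \lambda^k F_{2k+1}$, formula \eqref{Fprime} gives $F_{2k+1}' = \rho(g'F_{2k}-f'G_{2k})$, so $pF_{2k+1}' = \rho(pg'F_{2k}-pf'G_{2k})$, matching \eqref{u2prime} termwise; the uniform convergence established above then legitimizes $pu_2' = \sum_k\lambda^k(pF_{2k+1}')$. For $u_1=\sum_k\lambda^k\widetilde{F}_{2k}$, formula \eqref{Ftilprime} gives $\widetilde{F}_{2k}' = f'G_{2k}-\rho(f'\widetilde{G}_{2k-1}-g'\widetilde{F}_{2k-1})$, so $p\widetilde{F}_{2k}'$ matches the $k$-th term of \eqref{u1prime}, and the same differentiation theorem applies. (The $k=0$ terms, $\widetilde F_0'=f'$ and its $p$-multiple $pf'$, account for the isolated leading terms displayed in \eqref{u1prime}.)

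With the derivatives identified, I would verify that $u_1$ and $u_2$ satisfy \eqref{SL}. Applying the operator $(p\,\cdot\,')'+q$ to $u_1$ termwise --- legitimate once we know both $u_1$ and $pu_1'$ are given by uniformly convergent series whose termwise derivatives also converge uniformly --- and invoking the key relation \eqref{Ftilbiprime}, namely $(p\widetilde{F}_{2k}')'+q\widetilde{F}_{2k}=r\widetilde{F}_{2k-2}$, produces
\[
(pu_1')'+qu_1=\sum_{k=0}^{\infty}\lambda^k\bigl((p\widetilde{F}_{2k}')'+q\widetilde{F}_{2k}\bigr)=\sum_{k=1}^{\infty}\lambda^k r\widetilde{F}_{2k-2}=\lambda r\sum_{j=0}^{\infty}\lambda^j\widetilde{F}_{2j}=\lambda r u_1,
\]
where the $k=0$ term vanishes because $\widetilde{F}_{-2}\equiv0$ by \eqref{D-}, and the index shift $j=k-1$ reconstitutes $u_1$. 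The identical computation for $u_2$ using \eqref{Fbiprime} gives $(pu_2')'+qu_2=\lambda r u_2$. Thus both are solutions of \eqref{SL}. Finally, the initial conditions \eqref{initial} follow by evaluating the series at $x=x_0$: every formal power with a positive even or odd index is an integral from $x_0$ to $x$ and hence vanishes at $x_0$, so only the $k=0$ terms survive. This gives $u_1(x_0)=\widetilde{F}_0(x_0)=f(x_0)=1$ and $u_2(x_0)=F_1(x_0)=0$; for the derivatives, \eqref{u1prime} evaluated at $x_0$ leaves only $pf'(x_0)$, and \eqref{u2prime} leaves $\rho(pg'-pf')(x_0)=\rho p(x_0)(g'(x_0)-f'(x_0))=1$ by the definition of $\rho$, so $u_2'(x_0)=1/p(x_0)$. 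Since the Wronskian of $u_1$ and $u_2$ at $x_0$ equals $u_1(x_0)u_2'(x_0)-u_1'(x_0)u_2(x_0)=1/p(x_0)\neq0$, the two solutions are linearly independent and \eqref{genmain} is the general solution.

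The main obstacle I anticipate is not any single identity but the careful bookkeeping required to rigorously justify the two termwise differentiations and the termwise application of the second-order operator all at once. The cleanest route is to treat $u$ and $pu'$ as the fundamental pair of unknowns (rather than $u$ and $u'$), since the relations in Lemma \ref{Lemma Derivatives} are stated for $pF_n'$-type quantities and the hypothesis $pf',pg'\in C^1[a,b]$ is exactly what makes $(pF_n')'$ well behaved even where $p$ vanishes; this is precisely the device that lets the argument tolerate zeros of $p$. One must confirm that the uniform-convergence hypotheses of the differentiation theorem genuinely hold for the once- and twice-differentiated series --- the bounds of Lemma \ref{Lemma Estimates}, combined with the boundedness of $f',g'$ (equivalently of $pf',pg'$ and $1/p$ away from zeros, or better, directly of $f,g\in C^1$) supply these, but the estimates for $(p\widetilde F_{2k}')'$ need to be read off from the right-hand side $r\widetilde F_{2k-2}$ via \eqref{Ftilbiprime} rather than from the product rule, which keeps the constants controlled.
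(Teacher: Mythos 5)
Your proposal is correct and follows essentially the same route as the paper's own proof: uniform convergence from Lemma \ref{Lemma Estimates}, the solution property and the derivative formulas \eqref{u1prime}--\eqref{u2prime} from the identities of Lemma \ref{Lemma Derivatives}, the initial conditions from the vanishing of all formal powers at $x_{0}$, and linear independence via the Wronskian at $x_{0}$. The only difference is one of detail: you spell out the termwise-differentiation justification (and the observation that the second-derivative terms should be bounded through $r\widetilde{F}_{2k-2}$, i.e.\ through $fr$ and $gr$, rather than by the product rule), which the paper compresses into the single sentence ``Due to Lemma \ref{Lemma Derivatives} we obtain that $u_{1}$ and $u_{2}$ are indeed solutions.''
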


\begin{remark}\label{Rmk P in Der}
The function $p$ in \eqref{u1prime} and \eqref{u2prime} is necessary only in the case when this function possesses zeros and the derivatives $f'$ and $g'$ increase to infinity near the zeros of the function $p$. In all other cases we can easily remove all occurrences of $p$ in \eqref{u1prime} and \eqref{u2prime}.
\end{remark}

\begin{proof}
Lemma \ref{Lemma Estimates} guarantees the uniform convergence of all the
involved series. Moreover, it is not difficult to see that the majorizing
series for $\left\vert u_{1}(x)\right\vert $ converges to the function
$c_{3}\left(  1+c\left\vert x-x_{0}\right\vert \right)  e^{c\left\vert
x-x_{0}\right\vert }$ where $c=2\left\vert \lambda\right\vert c_{1}c_{2}c_{3}$
meanwhile the majorizing series corresponding to $\left\vert u_{2}%
(x)\right\vert $ converges to $2c_{1}c_{3}e^{c\left\vert x-x_{0}\right\vert }%
$. Indeed, we have
\[
\left\vert u_{1}(x)\right\vert \leq\sum_{k=0}^{\infty}
\left\vert \lambda\right\vert ^{k}\bigl\vert \widetilde{F}_{2k}(x)\bigr\vert
\leq c_{3}\sum_{k=0}^{\infty}
\left\vert \lambda\right\vert ^{k}\frac{2^{k}(k+1)}{k!}(c_{1}c_{2}c_{3}%
)^{k}\left\vert x-x_{0}\right\vert ^{k}.
\]
Observe that $\sum_{k=0}^{\infty}
\frac{(k+1)c^{k}}{k!}t^{k}=\left(  te^{ct}\right)  ^{\prime}=\left(
1+ct\right)  e^{ct}$. Hence%
\[
\left\vert u_{1}(x)\right\vert \leq c_{3}\left(  1+c\left\vert x-x_{0}%
\right\vert \right)  e^{c\left\vert x-x_{0}\right\vert }%
\]
where $c=2\left\vert \lambda\right\vert c_{1}c_{2}c_{3}$. Analogously we
have
\[
\left\vert u_{2}(x)\right\vert \leq2c_{1}c_{3}e^{c\left\vert x-x_{0}%
\right\vert }.
\]

Due to Lemma \ref{Lemma Derivatives} we obtain that $u_{1}$ and $u_{2}$ are
indeed solutions of (\ref{SL}) as well as the equalities (\ref{u1prime}) and
(\ref{u2prime}).

The equalities (\ref{initial}) follow from the fact that all formal powers
$F_{n}$, $G_{n}$, $\widetilde{F}_{n}$ and $\widetilde{G}_{n}$ vanish at
$x=x_{0}$ for any $n\in\mathbb{N}$. Finally, from (\ref{initial}) it follows
that $u_{1}$ and $u_{2}$ are linearly independent.
\end{proof}

\begin{remark}
The requirement to know two particular solutions of equation \eqref{SL0} as well as values of their derivatives at some point in Theorem \ref{Th Modified SPPS} does not present any difficulty for numerical applications, a variety of numerical methods can be used in order to construct two particular solutions, e.g., the SPPS representation can be successfully applied, see \cite{KrPorter2010}. Solely the case when only one particular solution is known exactly gives some advantage to the formulas \eqref{Xgen1}--\eqref{Xgen3}.
\end{remark}

\begin{remark}
The Modified SPPS representation presented in Theorem \ref{Th Modified SPPS}
works not only when particular solutions are available for $\lambda_0 = 0$, but in fact when two particular solutions of the equation $(pv')' + q v = \lambda_0 r v$ are known for some fixed $\lambda_0$. The solution \eqref{u1u2} now takes the form
\begin{equation}
u_{1}=\sum_{k=0}^{\infty}
(\lambda-\lambda_0)^{k}\widetilde{F}_{2k}\quad\text{and}\quad u_{2}=\sum_{k=0}^{\infty}
(\lambda-\lambda_0)^{k}F_{2k+1}. \label{u1u2SpShift}
\end{equation}
The procedure of using particular solutions at some point $\lambda_0\ne 0$ is called the spectral shift technique.
\end{remark}

\begin{remark}\label{RmkPiecewiseContinuousCoeffs}
The conditions $\left\{  f,\,g,\,pf^{\prime},\,pg^{\prime
}\right\}  \subset C^{1}[a,b]$ and $\left\{
fr,\,gr\right\}  \subset C[a,b]$ in Theorem \ref{Th Modified SPPS} are superfluous and are necessarily only if we are interested in the classical solutions of equation \eqref{SL}. If we allow weak solutions, the SPPS representations of the general solution (both original and modified) can be obtained under weaker assumptions on the coefficients, namely when $\left\{  f,\,g,\,pf^{\prime},\,pg^{\prime
}\right\}  \subset AC[a,b]$ and $\left\{
fr,\,gr\right\}  \subset L^1[a,b]$. We refer the reader to \cite{BCK} for further details.
\end{remark}

Since the formal powers are the essential ingredient of several methods for
solving equations and corresponding spectral problems it is important to verify whether the method of their calculation based on two particular solutions (Definition \ref{Def Systems Fn}), we will call it the new method, presents computational advantages in comparison to the direct recursive integration (formulas (\ref{Xgen1})--(\ref{Xgen3})), the old method. It is clear that the new method of construction of the formal powers is applicable even when the function $1/(f^{2}p)$ is not necessarily continuous on $[a,b]$. For example, $f$ and $p$ can possess zeros on $[a,b]$. This is an important extension of applicability of the SPPS approach. Apart from it, we can highlight the following computational advantages of the new method.

\begin{enumerate}
\item The first several formal powers (whose contribution in the final result usually is greater than that of subsequent formal powers) are computed with a higher accuracy.
\item More formal powers can be computed. See for details \cite[Examples 7.3 and 7.7]{KrT2013}.
\item Computation of formal powers is considerably more stable, especially
when the particular solution $f$ is of a larger change or nearly vanishing on the interval of interest.
\item Computation of the formal powers by the new method requires the same number of integrations as by the old method and only several more algebraic operations, i.e., the computation time essentially does not increase. In some cases the new method may be several times faster than the old one, this is due to the necessity to use complex-valued particular solution for the old method to ensure that this solution does not vanish, meanwhile for the new method one still can work with real-valued particular solutions.
\item Accuracy is much higher when the particular solution $f$ or/and the
coefficient $p$ possess values close to zero on $[a,b]$.
\end{enumerate}

Below we illustrate these points.

\begin{example}
Consider the function $f(x)=1+cx$ which is obviously a particular solution of
the equation $f^{\prime\prime}(x)=0$ and $f(0)=1$. As a second particular
solution of the same equation satisfying the condition $g(0)=1$ we can choose
the function $g\equiv1$. The corresponding formal powers will be considered on
the segment $[0,10]$. It is easy to see that $G_{n}(x)=x^{n}/n!$. Moreover,
due to \eqref{D2} we have that for an odd $n$: $F_{n}(x)=x^{n}/n!$ meanwhile
for an even $n$ the formal powers $F_{n}$ have the form $F_{n}(x)=\frac{x^{n}%
}{(n+1)!}\left(  n\left(  1+cx\right)  +1\right)  $. In a similar way the
formal powers $\widetilde{F}_{n}$ for this example can be written down
explicitly by means of Definition \ref{Def Systems Fn}. All the calculations
of the recursive integrals were performed in Matlab using the Newton-Cottes 6
point integration formula of 7-th order (see, e.g., \cite{Rabinowitz}) with
$10^{5}$ uniformly distributed nodes. In all cases the computation took
several seconds. The presented numerical results correspond to odd $n$, and
the figures show the following difference $\left\vert x^{n}-n!F_{n}%
(x)\right\vert /\max_{[0,10]}x^{n}=\left\vert x^{n}-n!F_{n}(x)\right\vert
/10^{n}$.

First, we consider a case when $f$ is a nice function: $c=1$. The first few formal powers are computed more accurately by the new method meanwhile for the higher formal powers the old method resulted to be preferable. Nevertheless even in this ``nice'' case the error produced by the new method is not much worse than the error of the old method, see Fig. \ref{Fig Accuracy F} (a).

\begin{figure}[htb]
\centering
\begin{tabular}{cc}
\includegraphics[
height=2in,
width=3in,
bb=198 324 414 468
]
{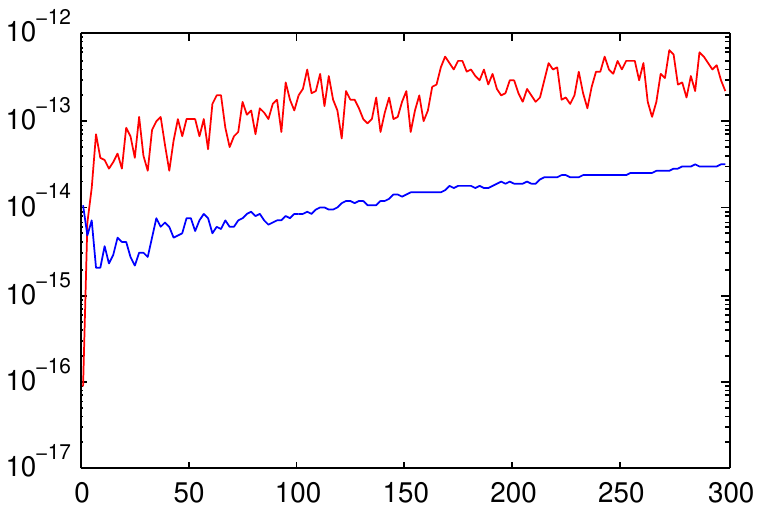}
&
 \includegraphics[
height=2in,
width=3in,
bb=198 324 414 468
]
{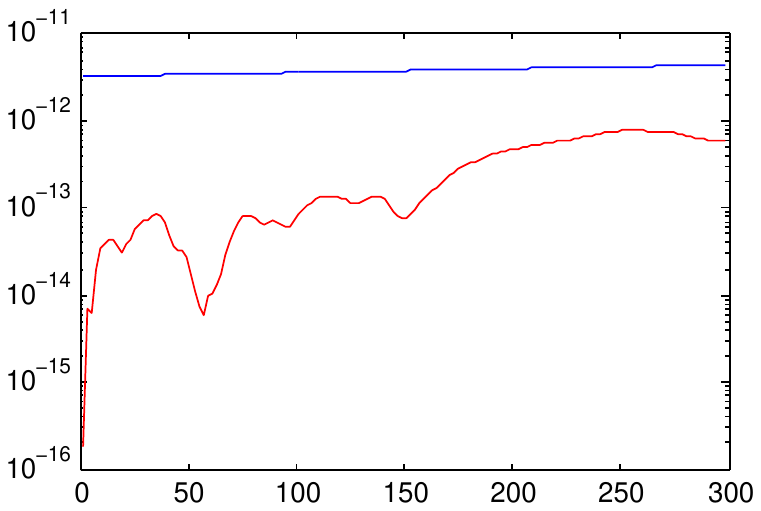}\\
(a) & (b) \\
\includegraphics[
height=2in,
width=3in,
bb=198 324 414 468
]
{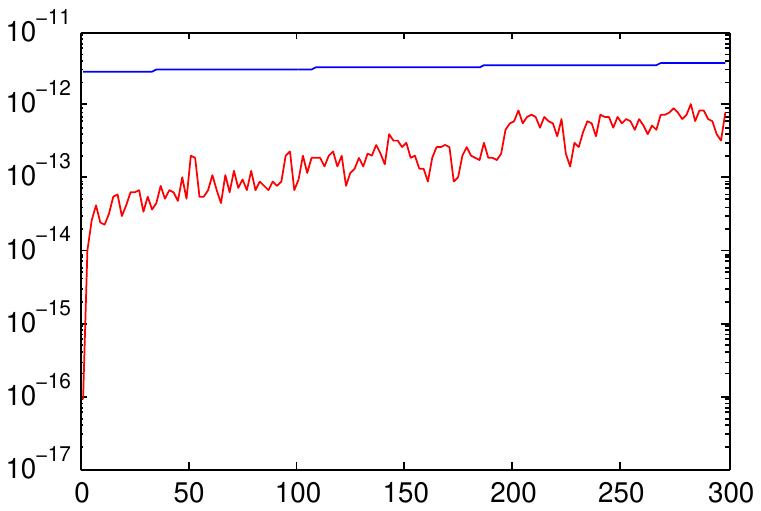}
&
\includegraphics[
height=2in,
width=3in,
bb=198 324 414 468
]
{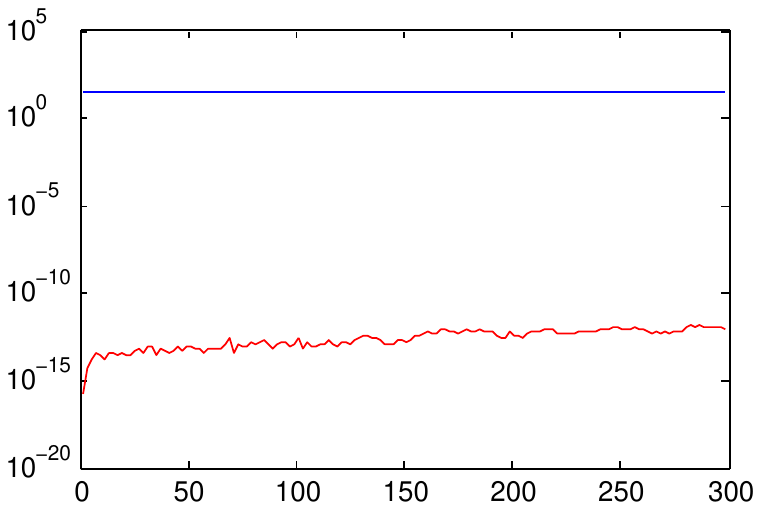}\\
(c) & (d)
\end{tabular}
\caption{The blue line (which starts above) shows the error of the formal
powers $F_{n}$, for odd $n$ computed by the old method. The red line
(starts below) shows the same but computed by the new method. The following values of the parameter $c$ are used:
(a)  $c=1$, (b) $c=0.0001-1/10$, (c) $c=100$ and (d) $c=1000000$.}
\label{Fig Accuracy F}
\end{figure}

Fig. \ref{Fig Accuracy F} (b) shows that the accuracy
achieved in the case of an almost vanishing function $f$ (here $c=0.0001-1/10$) is considerably better when the new method is applied.

Taking $c=100$ one can observe on Fig. \ref{Fig Accuracy F} (c) that the situation with the accuracy changes considerably for the old method meanwhile the new method delivers similar results as on Fig. \ref{Fig Accuracy F} (a). Moreover, further increasing $c$ and hence making the function $f$ take larger
values we easily arrive at a situation when the old method becomes practically
useless meanwhile the new method keeps delivering accurate results. Fig.
\ref{Fig Accuracy F} (d) corresponds to $c=1000000$.
\end{example}

\subsection{General solution in terms of the formal powers for Darboux associated equations}\label{subsect Darboux}
Suppose that $f$ and $g$ are nonvanishing on a segment of interest $[a,b]$
linearly independent solutions of (\ref{SL0}) such that $f(x_{0})=g(x_{0})=1$,
$x_{0}\in\lbrack a,b]$. Then together with equation (\ref{SL}) let us consider
the following Sturm-Liouville equations%
\begin{equation}
\left(  \frac{1}{r}v^{\prime}\right)  ^{\prime}+q_{1/f}v=\lambda\frac{1}%
{p}v\label{SLv}%
\end{equation}
and
\begin{equation}
\left(  \frac{1}{r}w^{\prime}\right)  ^{\prime}+q_{1/g}w=\lambda\frac{1}%
{p}w\label{SLw}%
\end{equation}
where%
\[
q_{1/f}=-\left(  \frac{q}{pr}+\frac{2}{r}\left(  \frac{f^{\prime}}{f}\right)
^{2}+\frac{f^{\prime}}{fr}\frac{\left(  pr\right)  ^{\prime}}{pr}\right)
\]
and $q_{1/g}$ has the same form as $q_{1/f}$ with $f$ being replaced
everywhere by $g$.

The functions $1/f$ and $1/g$ are solutions of (\ref{SLv}) and (\ref{SLw}) corresponding to $\lambda=0$
respectively. We will call (\ref{SLv}) and (\ref{SLw}) the Sturm-Liouville
equations Darboux associated with (\ref{SL}).

Let us observe that the functions
\[
v_{1}=\frac{1}{f}\sum_{k=0}^{\infty}
\lambda^{k}X^{(2k)}\qquad\text{and}\qquad v_{2}=\frac{1}{f}\sum_{k=0}^{\infty}
\lambda^{k}\widetilde{X}^{(2k+1)}
\]
are linearly independent solutions of \eqref{SLv} as well as the functions
\[
w_{1}=\frac{1}{g}\sum_{k=0}^{\infty}
\lambda^{k}Y^{(2k)}\qquad\text{and}\qquad w_{2}=\frac{1}{g}\sum_{k=0}^{\infty}
\lambda^{k}\widetilde{Y}^{(2k+1)}
\]
are linearly independent solutions of \eqref{SLw}.

Now, from \eqref{u1u2} and \eqref{D7} we have that
\begin{equation}
u_{1}=fg\left(  w_{1}-\lambda\rho\left(  w_{2}-v_{2}\right)  \right)
,\label{u1viaDarboux}%
\end{equation}
and from \eqref{u1u2} and \eqref{D2},
\begin{equation}
u_{2}=\rho fg\left(  v_{1}-w_{1}\right)  .\label{u2viaDarboux}%
\end{equation}
Equalities \eqref{u1viaDarboux} and \eqref{u2viaDarboux} give us expressions
for the solutions of \eqref{SL} in terms of solutions of the
Darboux-associated equations \eqref{SLv} and \eqref{SLw}.

\begin{remark}\label{Rmk Darboux}
The observation that for a Darboux-associated equation one has to calculate
the same formal powers as for the original Sturm-Liouville equation can be
used in the following way. Suppose that $1/p$ is a ``nice'' function meanwhile $r$ is ``nasty'', e.g., has a singularity or even an ``almost'' singularity, achieving very large values. In this
case one might prefer to calculate the integrals containing $1/(fp)$ in the
integrand rather than those containing $fr$. For this it is sufficient to
consider equation \eqref{SLv} and follow the described above construction
begining with Definition \ref{Def Systems Fn} where now the roles of $p$ and
$r$ result to be interchanged.
\end{remark}

\section{SPPS representations for solutions of pencils of Sturm-Liouville
operators}\label{Section3}
In this section we show that the SPPS representations analogous to those
established in Theorem \ref{Th Modified SPPS} can also be obtained for
solutions of Sturm-Liouville equations of the form
\begin{equation}
(p(x)u')'+q(x)u=\sum_{k=1}^{N}
\lambda^{k}R_{k}\left[  u\right]  ,\qquad x\in(a,b) \label{pencil}%
\end{equation}
where $R_{k}$ are linear differential operators of the first order,
$R_{k}\left[  u\right]  :=r_{k}(x)u+s_{k}(x)u^{\prime}$, $k=1,\ldots N$, the
complex-valued functions $p$, $q$, $r_{k}$, $s_{k}$ are continuous on the
finite segment $\left[  a,b\right]  $.

\subsection{SPPS representation for solutions of pencils}
It is possible to obtain the general solution of equation \eqref{pencil} by slightly changing the definition of formal powers (\ref{Xgen1})--(\ref{Xgen3}). We define the formal powers for equation \eqref{pencil} as follows
\begin{align}
\widetilde{\mathcal{X}}^{\left(  -n\right)  }&\equiv\mathcal{X}^{\left(
-n\right)  }\equiv0\qquad\text{ for }n\in\mathbb{N},\label{Xpencil1}\\
\widetilde{\mathcal{X}}^{\left(  0\right)  }&\equiv\mathcal{X}^{\left(  0\right)  }\equiv1,\\
\displaybreak[2]
\widetilde{\mathcal{X}}^{\left(  n\right)  }(x)   &  =
\begin{cases}\displaystyle\int_{x_{0}}^{x}
f(s)\sum_{k=1}^{N}R_{k}\left[  f(s)\widetilde{\mathcal{X}}^{\left(
n-2k+1\right)  }(s)\right]  ds, & n\text{ - odd,}\smallskip \\
\displaystyle\int_{x_{0}}^{x}
\widetilde{\mathcal{X}}^{\left(  n-1\right)  }\left(  s\right)  \dfrac
{ds}{f^{2}\left(  s\right)  p\left(  s\right)  }, & n\text{ - even,}
\end{cases}
\label{eq: equis tilde}\\
\mathcal{X}^{\left(  n\right)  }(  x)   &  =
\begin{cases}\displaystyle\int_{x_{0}}^{x}
\mathcal{X}^{\left(  n-1\right)  }\left(  s\right)  \dfrac{ds}{f^{2}\left(
s\right)  p\left(  s\right)  }, & n\text{ - odd,}\smallskip  \\
\displaystyle\int_{x_{0}}^{x}
f(s)\sum_{k=1}^{N}R_{k}\left[  f(s)\mathcal{X}^{\left(  n-2k+1\right)
}(s)\right]  ds, & n\text{ - even}%
\end{cases}
\label{eq: equis sin tilde}%
\end{align}
where $x_{0}$ is an arbitrary point of the segment $\left[  a,b\right]  $ such
that $p(x_{0})\neq0$. The following theorem generalizes Theorem \ref{ThSolSL}.

\begin{theorem}[SPPS representations for polynomial pencils of operators]
\label{ThSPPS_Pencil} Assume that on a finite interval $[a,b]$, equation
\eqref{SL0} possesses a particular solution $f$ such that the functions
$fR_{k}[f]$, $k=1,\ldots,N$ and $\frac{1}{f^{2}p}$ are continuous on $\left[
a,b\right]  $. Then the general solution of \eqref{pencil} has the form
$u=c_{1}u_{1}+c_{2}u_{2}$, where $c_{1}$ and $c_{2}$ are arbitrary complex
constants and
\begin{equation}
u_{1}=f\sum_{n=0}^{\infty}\lambda^{n}\widetilde{\mathcal{X}}^{\left(
2n\right)  }\qquad\text{and}\qquad u_{2}=f\sum_{n=0}^{\infty}\lambda
^{n}\mathcal{X}^{\left(  2n+1\right)  }.\label{solPencil}%
\end{equation}
Both series in \eqref{solPencil} converge uniformly on $\left[  a,b\right]$.
\end{theorem}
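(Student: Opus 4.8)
The plan is to mirror the structure of the proof of Theorem~\ref{Th Modified SPPS}, reducing the statement to two routine verifications: uniform convergence of the series in \eqref{solPencil}, and the fact that the functions $u_1$ and $u_2$ formally satisfy \eqref{pencil}. First I would establish the convergence. Following the method of Lemma~\ref{Lemma Estimates}, I would prove by induction on $n$ that the formal powers $\mathcal{X}^{(n)}$ and $\widetilde{\mathcal{X}}^{(n)}$ admit majorant bounds of the form $|\mathcal{X}^{(n)}(x)|\le C_n|x-x_0|^{\lceil n/2\rceil}$ for suitable constants $C_n$. The complication relative to Lemma~\ref{Lemma Estimates} is that the odd-step recursion in \eqref{eq: equis tilde}--\eqref{eq: equis sin tilde} now couples $\widetilde{\mathcal{X}}^{(n)}$ to all lower formal powers $\widetilde{\mathcal{X}}^{(n-2k+1)}$ for $k=1,\dots,N$, rather than just to the immediately preceding one; moreover the operator $R_k[f\widetilde{\mathcal{X}}^{(n-2k+1)}]$ contains a derivative $s_k\,(f\widetilde{\mathcal{X}}^{(n-2k+1)})'$. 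I would handle the derivative by noting that the integrand in the odd-step integral is exactly $f\sum_k R_k[f\widetilde{\mathcal{X}}^{(n-2k+1)}]$, and that $(f\widetilde{\mathcal{X}}^{(m)})'$ can be bounded using the even-step defining relation for $\widetilde{\mathcal{X}}^{(m+1)}$ together with continuity of the coefficients; the multi-term coupling only inflates the combinatorial constants. The upshot is a majorant series dominated, after summation over $\lambda$, by an entire function of $|\lambda|$ and $|x-x_0|$, giving uniform convergence on $[a,b]\times\{|\lambda|\le M\}$ for every $M$, hence the claimed uniform convergence in $x$ for each fixed $\lambda$.

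Next I would verify that $u_1$ and $u_2$ solve \eqref{pencil}. The key computation is the analogue of Lemma~\ref{Lemma Derivatives}: I claim that the recursively defined formal powers satisfy the pointwise identities
\begin{equation*}
(p(f\mathcal{X}^{(n)})')'+q(f\mathcal{X}^{(n)})=f\sum_{k=1}^{N}R_k\bigl[f\mathcal{X}^{(n-2k+1)}\bigr]
\end{equation*}
for odd $n$, and the corresponding identity for $f\widetilde{\mathcal{X}}^{(n)}$ with even $n$, while for the opposite parities the left-hand side vanishes. To prove these I would differentiate the product $f\mathcal{X}^{(n)}$ twice, using the Leibniz rule together with the defining integral relations, and exploit the standard SPPS identity $(p(fh)')'+q(fh)=\frac{1}{f}(pf^2h')'$ valid because $f$ solves \eqref{SL0}; applied with $h=\mathcal{X}^{(n)}$ the even-step relation $\mathcal{X}^{(n)}=\int \mathcal{X}^{(n-1)}/(f^2p)$ makes $pf^2(\mathcal{X}^{(n)})'=\mathcal{X}^{(n-1)}$, and then the odd-step relation produces exactly the source term. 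Summing over $n$ against $\lambda^n$ and collecting powers of $\lambda$, the shift $n\mapsto n-2k+1$ in the source precisely reproduces the right-hand side $\sum_{k=1}^N\lambda^k R_k[u_i]$ of \eqref{pencil}; this bookkeeping of indices is where the exponent $2k-1$ in the definitions earns its place.

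Finally I would record the initial conditions: since every formal power with $n\ge 1$ vanishes at $x_0$ and $\mathcal{X}^{(0)}=\widetilde{\mathcal{X}}^{(0)}\equiv 1$, evaluating \eqref{solPencil} and its derivative at $x_0$ shows $u_1(x_0)=f(x_0)$, $u_2(x_0)=0$ with Wronskian $p(x_0)^{-1}\ne 0$, so $u_1,u_2$ are linearly independent and \eqref{solPencil} is the general solution. I expect the main obstacle to be the derivative bound inside the odd-step convergence estimate: because $R_k$ involves $s_k u'$, controlling $(f\widetilde{\mathcal{X}}^{(m)})'$ requires feeding the even-step relation back in and verifying that the differentiated formal power still obeys a majorant of the right polynomial degree, so that the induction closes. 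Once this single estimate is in place, both the convergence and the term-by-term differentiation justifying the Leibniz computation follow, and the rest is the index bookkeeping described above.
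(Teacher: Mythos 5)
Your proof is necessarily a different route from the paper's, because the paper in fact presents \emph{no} proof of Theorem \ref{ThSPPS_Pencil}: it cites \cite{KTV} for the case $s_{k}\equiv0$, remarks that that proof "can be easily generalized", and deliberately omits the argument in favour of proving the stronger Theorem \ref{Th Modified SPPS copy(1)}. So what you are doing is reconstructing the omitted direct proof, and your skeleton is the right one: majorant estimates for the formal powers, the identity $(p(fh)')'+q(fh)=\frac{1}{f}(pf^{2}h')'$ combined with the defining recursions \eqref{eq: equis tilde}--\eqref{eq: equis sin tilde}, collection of powers of $\lambda$, and the Wronskian evaluation at $x_{0}$. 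This is also exactly the architecture the paper uses for the stronger result (Lemmas \ref{Lemma Derivatives Pencil} and \ref{Lemma Estimates Pencil}, Corollary \ref{Corr Estimates Pencil}), and your device of bounding the $s_{k}u'$ terms by feeding the recursion back in (so that no formal power is ever actually differentiated) is precisely the paper's own trick in \eqref{RkF2n+1}--\eqref{GnPencilAlt}.

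However, two of your displayed claims are wrong as stated, and each would make its step fail if carried through literally. First, the key identity should read, for odd $n$,
\begin{equation*}
\bigl(p(f\mathcal{X}^{(n)})'\bigr)'+q\,f\mathcal{X}^{(n)}=\sum_{k=1}^{N}R_{k}\bigl[f\mathcal{X}^{(n-2k)}\bigr]:
\end{equation*}
by \eqref{eq: equis sin tilde} one has $pf^{2}(\mathcal{X}^{(n)})'=\mathcal{X}^{(n-1)}$, then $(\mathcal{X}^{(n-1)})'=f\sum_{k}R_{k}[f\mathcal{X}^{(n-2k)}]$, and the prefactor $f$ is cancelled by the $1/f$ in your standard identity. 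Your version carries an extra factor $f$ and the index $n-2k+1$; since $n-2k+1$ is \emph{even} while $u_{2}$ is built only from odd-index powers, collecting powers of $\lambda$ with your indices cannot reproduce $\sum_{k}\lambda^{k}R_{k}[u_{2}]$ (the parities do not match), and the spurious $f$ would yield $f\sum_{k}\lambda^{k}R_{k}[u]$ instead of $\sum_{k}\lambda^{k}R_{k}[u]$. With the shift $n\mapsto n-2k$ the bookkeeping works exactly as you describe. Second, the induction hypothesis $|\mathcal{X}^{(n)}(x)|\le C_{n}|x-x_{0}|^{\lceil n/2\rceil}$ is false for $N\ge2$: for example, $\mathcal{X}^{(4)}$ contains, through the $k=2$ term and the derivative part of $R_{2}$, the contribution $\int_{x_{0}}^{x}\frac{s_{2}(s)}{p(s)}\,ds$ (since $f R_{2}[f\mathcal{X}^{(1)}]=fR_{2}[f]\,\mathcal{X}^{(1)}+\frac{s_{2}}{p}\mathcal{X}^{(0)}$), which is of first order in $x-x_{0}$ and cannot be dominated by $|x-x_{0}|^{2}$ near $x_{0}$. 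The coupling to $N$ previous powers does not merely "inflate combinatorial constants": it lowers the minimal attainable degree, and the correct majorant is a sum of powers of varying degrees with factorial denominators, of the shape of Lemma \ref{Lemma Estimates Pencil}; the resulting factorial $\bigl(\bigl[\tfrac{n}{N}\bigr]\bigr)!$ still produces an entire majorant of finite order (cf.\ Corollary \ref{Corr Estimates Pencil}), so your conclusion of uniform convergence survives. Both repairs are mechanical and stay within your framework; once they are made, your argument is a complete proof --- essentially the one from \cite{KTV} that the paper chose not to reproduce.
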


The formulation and the proof of this theorem in the case $s_{k}\equiv0$, $k=1,\ldots,N$ can be found in \cite{KTV}. An analogous theorem for a perturbed Bessel equation in the case $N=1$ can be found in \cite{CKT2013}. The proof from \cite{KTV} can be easily generalized onto the case considered
here. Nevertheless we do not present here the proof of Theorem \ref{ThSPPS_Pencil} because below we
prove a stronger result generalizing Theorem \ref{Th Modified SPPS} and allowing particular solution to have zeros.

\subsection{Modified SPPS representation for solutions of pencils}
We introduce the following definition (cf. Definition \ref{Def Systems Fn})
where in order not to overload this paper with additional notations we use the
same characters as above.

\begin{definition}
\label{Def Powers Pencil}Let equation \eqref{SL0} admit two linearly
independent solutions $f$ and $g$ such that $\left\{  f,\,g,\,pf^{\prime
},\,pg^{\prime}\right\}  \subset C^{1}[a,b]$ and $f(x_{0})=g(x_{0})=1$ where
$x_{0}$ is any point of $[a,b]$ such that $p(x_{0})\neq0$. Then the following
systems of functions $\{  F_{n}\}  $, $\{  \widetilde{F}_{n}\}$,
$\{  G_{n}\}$, $\{\widetilde{G}_{n}\}$ are defined recursively as follows
\begin{align}
F_{-n}&\equiv G_{-n}\equiv\widetilde{F}_{-n}\equiv\widetilde{G}_{-n}%
\equiv0\qquad\text{for }n\in\mathbb{N},\label{FGnegPencil}\\
F_{0}&\equiv G_{0}\equiv1, \qquad\widetilde{F}_{0}\equiv f, \qquad
\widetilde{G}_{0}\equiv g,\nonumber
\end{align}
for an odd $n$:
\begin{align}
F_{n}&=G_{n}=\rho\left(  gF_{n-1}-fG_{n-1}\right), \nonumber\\
\widetilde{F}_{n}(x)&=
\int_{x_{0}}^{x}
f(s)\sum_{k=1}^{N}R_{k}\left[  \widetilde{F}_{n-2k+1}(s)\right]  \,ds,
\label{FtilnPencil}\\
\widetilde{G}_{n}(x)&=
\int_{x_{0}}^{x}
g(s)\sum_{k=1}^{N}R_{k}\left[  \widetilde{G}_{n-2k+1}(s)\right]  \,ds,
\label{GtilnPencil}%
\end{align}
and for an even $n$:
\begin{align}
F_{n}(x)&=\int_{x_{0}}^{x}
f(s)\sum_{k=1}^{N}R_{k}\left[  F_{n-2k+1}(s)\right]  \,ds, \label{FnPencil}\\
G_{n}(x)&=\int_{x_{0}}^{x}
g(s)\sum_{k=1}^{N}R_{k}\left[  G_{n-2k+1}(s)\right]  \,ds, \label{GnPencil}\\
\widetilde{F}_{n}&=fG_{n}-\rho\left(  f\widetilde{G}_{n-1}-g\widetilde{F}_{n-1}\right),\nonumber\\
\widetilde{G}_{n}&=gF_{n}-\rho\left(  f\widetilde{G}_{n-1}-g\widetilde{F}_{n-1}\right).\nonumber
\end{align}
\end{definition}

From the last two equalities we have
\[
\widetilde{G}_{2n}-\widetilde{F}_{2n}=gF_{2n}-fG_{2n}.
\]
This definition may give an impression that the calculation of the formal
powers involves their differentiation (application of the operators $R_{k}$
under the sign of integral). Nevertheless it is easy to see that such
differentiation is superfluous. Namely, we have the following equalities for
the $F$-formal powers
\begin{align}
R_{k}\left[  F_{2n+1}\right]  &=\rho\left(  R_{k}\left[  g\right]  F_{2n}%
-R_{k}\left[  f\right]  G_{2n}\right)  , \label{RkF2n+1}\\
R_{k}[\widetilde{F}_{2n}]  &=R_{k}\left[  f\right]  G_{2n}%
+\rho\bigl(  R_{k}\left[  g\right]  \widetilde{F}_{2n-1}-R_{k}\left[
f\right]  \widetilde{G}_{2n-1}\bigr)  \label{RkFtil}%
\end{align}
as well as analogous equalities for the $G$-formal powers $G_{2n+1}$ and
$\widetilde{G}_{2n}$ with obvious substitution of $f$ by $g$ and vice versa.
For the proof of (\ref{RkF2n+1}) it is sufficient to observe that
$gF_{2n}^{\prime}-fG_{2n}^{\prime}=0$. Indeed,
\[
gF_{2n}^{\prime}-fG_{2n}^{\prime}=fg\left(  \sum_{k=1}^{N}R_{k}\left[
F_{2n-2k+1}\right]  -\sum_{k=1}^{N}R_{k}\left[  G_{2n-2k+1}\right]  \right)
\]
which equals zero because every operator $R_{k}$ is linear and $F_{2n-2k+1}%
\equiv G_{2n-2k+1}$ by definition. Equality (\ref{RkFtil}) is proved in a
similar way.

Thus, for a practical use of Definition \ref{Def Powers Pencil} instead of
(\ref{FtilnPencil}) and (\ref{GtilnPencil}) it is convenient to use an
alternative form of these equalities which does not require differentiation of
formal powers%
\begin{align}
\widetilde{F}_{2n+1}(x)    &=\int_{x_{0}}^{x}
f(s)\sum_{k=1}^{N}\left(  R_{k}\left[  f(s)\right]  G_{2n-2k+2}(s)+\rho\bigl(
R_{k}\left[  g(s)\right]  \widetilde{F}_{2n-2k+1}(s)-R_{k}\left[  f(s)\right]
\widetilde{G}_{2n-2k+1}(s)\bigr)  \right)  \,ds,\label{FtilnPencilAlt}\\
\widetilde{G}_{2n+1}(x)    &=\int_{x_{0}}^{x}
g(s)\sum_{k=1}^{N}\left(  R_{k}\left[  g(s)\right]  F_{2n-2k+2}(s)+\rho\bigl(
R_{k}\left[  g(s)\right]  \widetilde{F}_{2n-2k+1}(s)-R_{k}\left[  f(s)\right]
\widetilde{G}_{2n-2k+1}(s)\bigr)  \right)  \,ds,\label{GtilnPencilAlt}
\end{align}
and analogously, instead of (\ref{FnPencil}) and (\ref{GnPencil}) their
alternative form
\begin{align}
F_{2n}(x)&=\rho\int_{x_{0}}^{x}
f(s)\sum_{k=1}^{N}\left(  R_{k}\left[  g(s)\right]  F_{2n-2k}(s)-R_{k}\left[
f(s)\right]  G_{2n-2k}(s)\right)  \,ds,
\label{FnPencilAlt}\\
G_{2n}(x)&=\rho\int_{x_{0}}^{x}
g(s)\sum_{k=1}^{N}\left(  R_{k}\left[  g(s)\right]  F_{2n-2k}(s)-R_{k}\left[
f(s)\right]  G_{2n-2k}(s)\right)  \,ds.
\label{GnPencilAlt}%
\end{align}

\begin{lemma}
\label{Lemma Derivatives Pencil}For the functions defined by Definition
\ref{Def Powers Pencil} the following relations hold.

For an odd $n$:%
\begin{gather}
F_{n}'=G_{n}'=\rho\left(  g^{\prime}F_{n-1}-f^{\prime}%
G_{n-1}\right),\nonumber\\
(pF_{n}')'+qF_{n}=\sum_{k=1}^{N}R_{k}\left[
F_{n-2k}\right]  , \label{pFnprimePencil}\\
(pG_{n}')'+qG_{n}=\sum_{k=1}^{N}R_{k}\left[
G_{n-2k}\right],\nonumber
\end{gather}
and for an even $n$:
\begin{gather*}
\widetilde{F}_{n}^{\prime}=f^{\prime}G_{n}-\rho\bigl(  f^{\prime}\widetilde
{G}_{n-1}-g^{\prime}\widetilde{F}_{n-1}\bigr),\\
\widetilde{G}_{n}^{\prime}=g^{\prime}F_{n}-\rho\bigl(  f^{\prime}\widetilde
{G}_{n-1}-g^{\prime}\widetilde{F}_{n-1}\bigr),\\
(p\widetilde{F}_{n}')'+q\widetilde{F}_{n}%
=\sum_{k=1}^{N}R_{k}\bigl[  \widetilde{F}_{n-2k}\bigr],\\
(p\widetilde{G}_{n}')'+q\widetilde{G}_{n}
=\sum_{k=1}^{N}R_{k}\bigl[  \widetilde{G}_{n-2k}\bigr].
\end{gather*}

\end{lemma}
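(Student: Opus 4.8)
The plan is to mirror the proof of Lemma \ref{Lemma Derivatives}, treating the odd and even cases separately and differentiating the recursive relations of Definition \ref{Def Powers Pencil} directly. Three ingredients do all the work: the linearity of each operator $R_k$, the fact that $f$ and $g$ solve \eqref{SL0}, i.e. $(pf')'=-qf$ and $(pg')'=-qg$, and the Wronskian identity \eqref{W[f,g]} in the form $g'f-f'g=1/(\rho p)$. Throughout I use the parity relation $F_m=G_m$ for odd $m$ together with the pencil counterpart of \eqref{D10}, namely $F_{2n+1}=G_{2n+1}=\rho(\widetilde{G}_{2n}-\widetilde{F}_{2n})$, which follows from the relation $\widetilde{G}_{2n}-\widetilde{F}_{2n}=gF_{2n}-fG_{2n}$ recorded just after Definition \ref{Def Powers Pencil}.

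First, the first-order formulas. For odd $n$ I differentiate $F_n=\rho(gF_{n-1}-fG_{n-1})$, obtaining the claimed expression $\rho(g'F_{n-1}-f'G_{n-1})$ plus the extra term $\rho(gF_{n-1}'-fG_{n-1}')$. Since $n-1$ is even, \eqref{FnPencil} and \eqref{GnPencil} give $F_{n-1}'=f\sum_k R_k[F_{n-2k}]$ and $G_{n-1}'=g\sum_k R_k[G_{n-2k}]$, so the extra term equals $\rho fg\sum_k R_k[F_{n-2k}-G_{n-2k}]$, which vanishes because each index $n-2k$ is odd (so $F_{n-2k}=G_{n-2k}$) and $R_k$ is linear. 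Differentiating the even-$n$ relation $\widetilde{F}_n=fG_n-\rho(f\widetilde{G}_{n-1}-g\widetilde{F}_{n-1})$ produces the stated $\widetilde{F}_n'$ once the surviving integral terms $fG_n'-\rho(f\widetilde{G}_{n-1}'-g\widetilde{F}_{n-1}')$ are shown to cancel; inserting $G_n'=g\sum_k R_k[G_{n-2k+1}]$, $\widetilde{G}_{n-1}'=g\sum_k R_k[\widetilde{G}_{n-2k}]$, $\widetilde{F}_{n-1}'=f\sum_k R_k[\widetilde{F}_{n-2k}]$ and replacing $G_{n-2k+1}=\rho(\widetilde{G}_{n-2k}-\widetilde{F}_{n-2k})$ (odd index) makes the whole bracket collapse to zero by linearity of $R_k$.

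Next, the second-order identity for odd $n$. I write $pF_n'=\rho(pg'F_{n-1}-pf'G_{n-1})$ and differentiate once more. Replacing $(pf')'$ and $(pg')'$ by $-qf$ and $-qg$ regroups the undifferentiated terms into $-q\rho(gF_{n-1}-fG_{n-1})=-qF_n$, while the terms carrying $F_{n-1}'$ and $G_{n-1}'$ assemble into $\rho p(g'f-f'g)\sum_k R_k[F_{n-2k}]$, using again $F_{n-2k}=G_{n-2k}$ for these odd indices. The Wronskian identity turns the prefactor $\rho p(g'f-f'g)$ into $1$, yielding \eqref{pFnprimePencil}; the statement for $G_n$ is identical by the parity relation.

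The main obstacle is the even-$n$ identity $(p\widetilde{F}_n')'+q\widetilde{F}_n=\sum_k R_k[\widetilde{F}_{n-2k}]$, where the bookkeeping is heaviest. Starting from $p\widetilde{F}_n'=pf'G_n-\rho(pf'\widetilde{G}_{n-1}-pg'\widetilde{F}_{n-1})$ and differentiating, the $(pf')',(pg')'$ substitutions again collapse the $q$-terms into $-q\widetilde{F}_n$. The remaining terms, after inserting $G_n'=g\sum_k R_k[G_{n-2k+1}]$, $\widetilde{G}_{n-1}'=g\sum_k R_k[\widetilde{G}_{n-2k}]$ and $\widetilde{F}_{n-1}'=f\sum_k R_k[\widetilde{F}_{n-2k}]$, are $pf'g\sum_k R_k[G_{n-2k+1}]-\rho pf'g\sum_k R_k[\widetilde{G}_{n-2k}]+\rho pg'f\sum_k R_k[\widetilde{F}_{n-2k}]$. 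The key step is to eliminate $R_k[G_{n-2k+1}]$ via $R_k[G_{n-2k+1}]=\rho R_k[\widetilde{G}_{n-2k}-\widetilde{F}_{n-2k}]$, legitimate since $n-2k+1$ is odd and $R_k$ is linear; after this substitution the $R_k[\widetilde{G}_{n-2k}]$ contributions cancel in pairs and one is left with $\rho p(g'f-f'g)\sum_k R_k[\widetilde{F}_{n-2k}]$, which the Wronskian identity reduces to $\sum_k R_k[\widetilde{F}_{n-2k}]$. I expect the only real difficulty to be keeping the $f$/$g$ factors and the shifted indices aligned across the sum over $k$; the $\widetilde{G}_n$ identities then follow by interchanging the roles of $f$ and $g$.
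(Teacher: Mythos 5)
Your proposal is correct and follows essentially the same route as the paper: differentiate the defining recursions, use the linearity of the $R_{k}$, the identities $(pf')'=-qf$, $(pg')'=-qg$, and the Wronskian relation \eqref{W[f,g]}, with the pencil analogue of \eqref{D10} handling the cancellations. The paper only writes out the odd-$n$ second-order identity (which matches your computation exactly) and dismisses the first-derivative formulas and the even-$n$ case as analogous, so your proposal simply supplies, correctly, the details the paper omits.
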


\begin{proof}
The proof of the equalities for the first derivatives of the formal powers is
completely analogous to that from Lemma \ref{Lemma Derivatives}. We will prove
(\ref{pFnprimePencil}), the rest of the equalities involving second
derivatives of the formal powers are proved similarly. Consider
\begin{equation}
(pF_{n}')'=\rho\bigl(  (pg')' F_{n-1}-(pf')'
G_{n-1}\bigr)  +\rho p\left(  g^{\prime}F_{n-1}^{\prime}-f^{\prime}%
G_{n-1}^{\prime}\right)  . \label{vsp1}%
\end{equation}
Since $p\left(  g^{\prime}F_{n-1}^{\prime}-f^{\prime}G_{n-1}^{\prime}\right)
=p\left(  g^{\prime}f-f^{\prime}g\right)  \sum_{k=1}^{N}R_{k}\left[
F_{n-2k}\right]  =\frac{1}{\rho}\sum_{k=1}^{N}R_{k}\left[  F_{n-2k}\right]  $,
from (\ref{vsp1}) we have
\[
(pF_{n}')'=-\rho q\left(  gF_{n-1}%
-fG_{n-1}\right)  +\sum_{k=1}^{N}R_{k}\left[  F_{n-2k}\right]
\]
which is (\ref{pFnprimePencil}).
\end{proof}

\begin{lemma}
\label{Lemma Estimates Pencil}Let
$c_{1}=\left\vert \rho\right\vert $, $c_{2}=\max_{k=\overline{1,N}}(  \max_{x\in[a,b]}\left\vert
R_{k}[f]\right\vert ,\max_{x\in[a,b]}\left\vert R_{k}[g]\right\vert)$ and $c_{3}=\max(  \max_{x\in[a,b]}\left\vert f\right\vert,$ $\max
_{x\in[a,b]}\left\vert g\right\vert )$.
Then for the functions
defined by Definition \ref{Def Powers Pencil} the following inequalities
hold.
\begin{align}
\left\vert F_{2n}(x)\right\vert &\leq
\sum_{k=0}^{n-\left[  \frac{n}{N}\right]}
\binom{n}{k}\frac{(2c_{1}c_{2}c_{3})^{n-k}\left\vert x-x_{0}\right\vert
^{n-k}}{\left(  n-k\right)  !},\label{F2nPencil}\\\displaybreak[2]
\left\vert G_{2n}(x)\right\vert &\leq
\sum_{k=0}^{n-\left[  \frac{n}{N}\right]  }
\binom{n}{k}\frac{(2c_{1}c_{2}c_{3})^{n-k}\left\vert x-x_{0}\right\vert
^{n-k}}{\left(  n-k\right)  !} \label{G2nPencil}\\\displaybreak[2]
\left\vert F_{2n+1}(x)\right\vert =\left\vert G_{2n+1}(x)\right\vert
&\leq2c_{1}c_{3}\sum_{k=0}^{n-\left[  \frac{n}{N}\right]  }
\binom{n}{k}\frac{(2c_{1}c_{2}c_{3})^{n-k}\left\vert x-x_{0}\right\vert
^{n-k}}{\left(  n-k\right)  !}, \label{F2n+1Pencil}\\\displaybreak[2]
\left\vert \widetilde{F}_{2n}(x)\right\vert &\leq c_{3}\sum_{k=0}^{n-\left[  \frac{n}{N}\right]  }
\binom{n}{k}\frac{(2c_{1}c_{2}c_{3})^{n-k}\left\vert x-x_{0}\right\vert
^{n-k}(n-k+1)}{\left(  n-k\right)  !},\label{Ftil2nPencil}\\\displaybreak[2]
\left\vert \widetilde{G}_{2n}%
(x)\right\vert &\leq c_{3}\sum\limits_{k=0}^{n-\left[  \frac{n}{N}\right]  }
\binom{n}{k}\frac{(2c_{1}c_{2}c_{3})^{n-k}\left\vert x-x_{0}\right\vert
^{n-k}(n-k+1)}{\left(  n-k\right)  !}, \label{Gtil2nPencil}\\\displaybreak[2]
\left\vert \widetilde{F}_{2n+1}(x)\right\vert &\leq c_{2}c_{3}
\sum_{k=0}^{n+1-\left[  \frac{n+1}{N}\right]  }
\binom{n+1}{k}\frac{(2c_{1}c_{2}c_{3})^{n-k}\left\vert x-x_{0}\right\vert
^{n+1-k}}{\left(  n-k\right)  !}, \label{Ftil2n+1Pencil}\\
\left\vert \widetilde{G}_{2n+1}(x)\right\vert &\leq c_{2}c_{3}
\sum_{k=0}^{n+1-\left[  \frac{n+1}{N}\right]  }
\binom{n+1}{k}\frac{(2c_{1}c_{2}c_{3})^{n-k}\left\vert x-x_{0}\right\vert
^{n+1-k}}{\left(  n-k\right)  !}, \label{Gtil2n+1Pencil}
\end{align}
where $[x]$ denotes the largest integer less than or equal to $x$.
\end{lemma}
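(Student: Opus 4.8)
The plan is to prove the eight estimates simultaneously by strong induction on $n$, using the differentiation-free forms \eqref{FtilnPencilAlt}--\eqref{GnPencilAlt} of the recurrences in Definition \ref{Def Powers Pencil}. Writing $A=2c_1c_2c_3$ and $t=\left\vert x-x_0\right\vert$ and reindexing each bound by its degree $j$ in $t$ (so that the letter $k$ is freed for the recurrences), every claimed inequality acquires the uniform shape of a binomial-weighted partial exponential sum; for instance \eqref{F2nPencil} reads $\left\vert F_{2n}\right\vert\leq\sum_{j=[n/N]}^{n}\binom{n}{j}(At)^{j}/j!$, while \eqref{Ftil2nPencil} and \eqref{Ftil2n+1Pencil} carry the extra factor $(j+1)$ and the prefactors $c_3$, $c_2c_3$. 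I would run the induction in this normalized form. The base case $n=0$ is immediate: $\left\vert F_0\right\vert=\left\vert G_0\right\vert=1$, $\left\vert F_1\right\vert=\left\vert\rho(g-f)\right\vert\leq2c_1c_3$, $\left\vert\widetilde F_0\right\vert=\left\vert f\right\vert\leq c_3$, and $\left\vert\widetilde F_1\right\vert\leq c_2c_3t$, once one adopts the conventions $1/m!=0$ and $\binom{m}{i}=0$ for negative integers $m$, which simultaneously absorb every negative-index formal power entering the recurrences.

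Within the inductive step at level $n$ I would establish the four pairs in the order dictated by the recurrences: first the even powers $F_{2n},G_{2n}$, then $F_{2n+1}=G_{2n+1}$, then the even tilde-powers $\widetilde F_{2n},\widetilde G_{2n}$, and finally $\widetilde F_{2n+1},\widetilde G_{2n+1}$. Two of these are purely algebraic. The relation $F_{2n+1}=\rho(gF_{2n}-fG_{2n})$ needs no integration and yields \eqref{F2n+1Pencil} at once with the factor $2c_1c_3$. For $\widetilde F_{2n}=fG_{2n}-\rho(f\widetilde G_{2n-1}-g\widetilde F_{2n-1})$ I would combine the just-proved bound for $G_{2n}$ with the hypothesis for $\widetilde F_{2n-1},\widetilde G_{2n-1}$, whose degrees already share the lower limit $[n/N]$; the sole manipulation is $1/(j-1)!=j/j!$, which merges the two contributions into the factor $(j+1)$ of \eqref{Ftil2nPencil}.

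The two integrated cases, $F_{2n}$ from \eqref{FnPencilAlt} and $\widetilde F_{2n+1}$ from \eqref{FtilnPencilAlt}, carry the real content. In each I would bound the integrand termwise by $\left\vert f\right\vert,\left\vert g\right\vert\leq c_3$, $\left\vert R_k[f]\right\vert,\left\vert R_k[g]\right\vert\leq c_2$, $\left\vert\rho\right\vert=c_1$ together with the induction hypothesis on the lower levels $n-1,\dots,n-N$, then integrate $\int_{0}^{t}\tau^{j}\,d\tau=t^{j+1}/(j+1)$ and reindex to degree $i=j+1$. After this, the coefficient of $(At)^{i}/i!$ becomes a sum of binomials $\sum_{k=1}^{N}\binom{n-k}{i-1}$ (for the odd tilde case the algebra first collapses the pair of fractions through $\frac{1}{(j+1)!}+\frac{1}{(j+1)(j-1)!}=\frac{1}{j!}$, and the relevant integer is $n+1$ rather than $n$). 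The Pascal relation $\binom{n-k}{i-1}=\binom{n-k+1}{i}-\binom{n-k}{i}$ telescopes this to $\binom{n}{i}-\binom{n-N}{i}\leq\binom{n}{i}$, which is exactly the target coefficient; dropping the admissibility indicator coming from the lower summation limit of the hypothesis only decreases the sum, so the bound is preserved.

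I expect the main obstacle to be the bookkeeping of the \emph{lower} limits $[n/N]$ (respectively $[(n+1)/N]$), i.e. showing that the coefficient of $(At)^{i}/i!$ truly vanishes for $i<[n/N]$ rather than merely being dominated. The key arithmetical fact is $[(n-k)/N]\geq[(n-N)/N]=[n/N]-1$ for $1\leq k\leq N$, which forces every admissible sub-power to feed only degrees $i\geq[n/N]$, while the extremal degree $i=[n/N]$ is supplied precisely by the $k=N$ term and is present exactly when $n\geq N$. One must keep this floor arithmetic consistent across the even and odd cases and check that it degrades gracefully when $n<N$, where the stated lower limit is simply an over-estimate. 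Once the coefficientwise inequalities hold for all four pairs, the induction closes and \eqref{F2nPencil}--\eqref{Gtil2n+1Pencil} follow.
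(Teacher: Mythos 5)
Your proposal is correct and follows essentially the same route as the paper's proof: induction using the differentiation-free recurrences \eqref{FnPencilAlt}--\eqref{GnPencilAlt} and \eqref{FtilnPencilAlt}, termwise bounding and integration, the floor-function estimate $\bigl[\tfrac{n-k}{N}\bigr]\geq\bigl[\tfrac{n}{N}\bigr]-1$ for the summation limits, the factorial collapse $\tfrac{1}{(j+1)!}+\tfrac{1}{(j+1)(j-1)!}=\tfrac{1}{j!}$ in the odd-tilde case, and the purely algebraic derivation of the remaining cases from the definition. The only deviations are cosmetic: you reindex the sums by the degree in $\left\vert x-x_{0}\right\vert$ and close the binomial sum by telescoping Pascal's rule to $\binom{n}{i}-\binom{n-N}{i}$, whereas the paper rearranges over $\ell=k+j-1$ and invokes the parallel-summation identity $\sum_{j=0}^{\ell}\binom{m-1-j}{\ell-j}=\binom{m}{\ell}$ --- two equivalent forms of the same computation.
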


\begin{remark}
In the case when $N=1$ and $s_1\equiv 0$, the estimates \eqref{F2nPencil}--\eqref{Gtil2n+1Pencil} coincide with the estimates given in Lemma \ref{Lemma Estimates}.
\end{remark}

\begin{proof}
Clearly inequalities \eqref{F2nPencil} and \eqref{G2nPencil} hold for $n=0$. Assume that inequalities \eqref{F2nPencil} and \eqref{G2nPencil} hold for all $n$, $0\le n < m$ for some $m\in\mathbb{N}$. Then taking into account \eqref{FGnegPencil} we obtain from \eqref{FnPencilAlt} that
\begin{align*}
    |F_{2m}(x)|&=\biggl|\rho\int_{x_0}^x f(s)\sum_{j=1}^{\min(N,m)}\left(  R_{j}\left[  g(s)\right]  F_{2m-2j}(s)-R_{j}\left[
f(s)\right]  G_{2m-2j}(s)\right)ds\biggr|\\
&\le 2c_1c_2c_3\sum_{j=1}^{\min(N,m)}\int_{x_0}^x \sum_{k=0}^{m-j-\left[  \frac{m-j}{N}\right]}
\binom{m-j}{k}\frac{(2c_{1}c_{2}c_{3})^{m-j-k}\left\vert x-x_{0}\right\vert
^{m-j-k}}{\left(  m-j-k\right)  !}ds\\
&=\sum_{j=1}^{\min(N,m)}\sum_{k=0}^{m-j-\left[  \frac{m-j}{N}\right]}
\binom{m-j}{k}\frac{(2c_{1}c_{2}c_{3})^{m-j-k+1}\left\vert x-x_{0}\right\vert
^{m-j-k+1}}{\left(  m-j-k+1\right)  !}.
\end{align*}
We rearrange the terms with respect to $\ell = k+j-1$. It follows from $1\le j\le \min(N,m)$ and $0\le k\le m-j-\bigl[\frac{m-j}N\bigr]$ that $0\le \ell\le m-1-\bigl[\frac{m-j}N\bigr]\le m-1-\bigl[\frac{m-N}N\bigr] = m-\bigl[\frac{m}N\bigr]$ and that $j\le \min(N,m,\ell+1)$. Hence
\begin{align*}
    |F_{2m}(x)|&\le \sum_{\ell = 0}^{m-\left[\frac{m}{N}\right]}\sum_{j=1}^{\min(N,m,\ell+1)}
    \binom{(m-1)-(j-1)}{\ell - (j-1)}\frac{(2c_{1}c_{2}c_{3})^{m-\ell}\left\vert x-x_{0}\right\vert
^{m-\ell}}{\left(  m-\ell\right)  !}\\
&\le \sum_{\ell = 0}^{m-\left[\frac{m}{N}\right]}
    \frac{(2c_{1}c_{2}c_{3})^{m-\ell}\left\vert x-x_{0}\right\vert
^{m-\ell}}{\left(  m-\ell\right)  !}\sum_{j=0}^{\ell}\binom{m-1-j}{\ell - j}=
\sum_{\ell = 0}^{m-\left[\frac{m}{N}\right]}\binom{m}{\ell}
    \frac{(2c_{1}c_{2}c_{3})^{m-\ell}\left\vert x-x_{0}\right\vert
^{m-\ell}}{\left(  m-\ell\right)  !}.
\end{align*}
Similarly we obtain inequality \eqref{G2nPencil}. Now \eqref{F2n+1Pencil} easily follows from the definition.

It is easy to see from \eqref{FtilnPencilAlt}, \eqref{GtilnPencilAlt} that inequalities \eqref{Ftil2n+1Pencil} and \eqref{Gtil2n+1Pencil} hold for $n=0$. Assume that inequalities \eqref{Ftil2n+1Pencil} and \eqref{Gtil2n+1Pencil} hold for all $n$, $0\le n<m$. Similarly to the first part of the proof we obtain from \eqref{FtilnPencilAlt} that
\begin{align*}
    |\widetilde F_{2m+1}(x)|& \le \sum_{j=1}^{\min(N,m+1)}c_2c_3\sum_{k=0}^{m+1-j-\left[  \frac{m+1-j}{N}\right]}
\binom{m+1-j}{k}\frac{(2c_{1}c_{2}c_{3})^{m-j-k+1}\left\vert x-x_{0}\right\vert
^{m-j-k+2}}{\left(  m-j-k+2\right)  !}\\
& \qquad +
\sum_{j=1}^{\min(N,m)}2c_1c_2c_3\cdot c_2c_3\sum_{k=0}^{m+1-j-\left[  \frac{m+1-j}{N}\right]}
\binom{m+1-j}{k}\frac{(2c_{1}c_{2}c_{3})^{m-j-k}\left\vert x-x_{0}\right\vert
^{m-j-k+2}}{(m-j-k+2)\cdot\left(  m-j-k\right)  !}\\
&\le c_2c_3\sum_{j=1}^{\min(N,m+1)}\sum_{k=0}^{m+1-j-\left[  \frac{m+1-j}{N}\right]}
\binom{m+1-j}{k}\frac{(2c_{1}c_{2}c_{3})^{m-j-k+1}\left\vert x-x_{0}\right\vert
^{m-j-k+2}}{\left(  m-j-k+1\right)  !},
\end{align*}
end the proof can be finished as in the first part.

Now inequalities \eqref{Ftil2nPencil} and \eqref{Gtil2nPencil} easily follow from the definition.
\end{proof}

The following corollary presents rougher estimates than those in Lemma \ref{Lemma Estimates Pencil} however better suited for the convergency testing.
\begin{corollary}\label{Corr Estimates Pencil}
Under the conditions of Lemma \ref{Lemma Estimates Pencil} define
\begin{equation*}
    C(n,x):=\frac{(1+2c_1c_2c_3|x-x_0|)^n}{\bigl(\bigl[\frac nN\bigr]\bigr)!}.
\end{equation*}
Then for the functions $F_n$, $\widetilde F_n$, $n\ge 0$, the following estimates hold.
\begin{align*}
    |F_{2n}(x)|&\le C(n,x), & |F_{2n+1}(x)| &\le 2c_1c_3 C(n,x),\\
    |\widetilde F_{2n}(x)|&\le (n+1)c_3  C(n,x), & |\widetilde F_{2n+1}(x)| &\le \frac{n+1}{2c_1} C(n+1,x).
\end{align*}
The same estimates hold for the functions $G_n$, $\widetilde G_n$.
\end{corollary}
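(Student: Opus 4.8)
The plan is to derive the Corollary directly from the sharp bounds in Lemma~\ref{Lemma Estimates Pencil} by overestimating each binomial-weighted sum with a single closed-form expression. The key observation is that the right-hand sides in \eqref{F2nPencil}--\eqref{Gtil2n+1Pencil} are partial sums of binomial expansions. First I would treat the even-index case. Starting from \eqref{F2nPencil}, I would extend the summation range upward to the full range $0\le k\le n$ (legitimate since every added term is nonnegative) and recognize the resulting complete sum $\sum_{k=0}^{n}\binom{n}{k}\frac{(2c_1c_2c_3|x-x_0|)^{n-k}}{(n-k)!}$. The factorial in the denominator is the only obstacle to a clean binomial identity, so the natural move is the crude bound $\frac{1}{(n-k)!}\le \frac{1}{([n/N])!}$, valid because the lower cutoff $k\le n-[n/N]$ forces $n-k\ge[n/N]$ throughout the \emph{original} sum. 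Pulling the constant $1/([n/N])!$ out front and applying the binomial theorem $\sum_{k=0}^n\binom{n}{k}t^{n-k}=(1+t)^n$ with $t=2c_1c_2c_3|x-x_0|$ yields exactly $C(n,x)$, establishing $|F_{2n}(x)|\le C(n,x)$.

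For the remaining even-index estimates I would proceed analogously. The bound $|F_{2n+1}(x)|\le 2c_1c_3\,C(n,x)$ is immediate from \eqref{F2n+1Pencil} by the same manipulation applied to the identical inner sum. For $|\widetilde F_{2n}(x)|$ from \eqref{Ftil2nPencil}, the only new feature is the extra factor $(n-k+1)\le(n+1)$; bounding it by $n+1$ and extracting it, then repeating the factorial-and-binomial argument above, gives $(n+1)c_3\,C(n,x)$.

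The odd-tilde case \eqref{Ftil2n+1Pencil} is slightly more delicate because the summation index ranges over $0\le k\le (n+1)-[(n+1)/N]$ with binomial coefficients $\binom{n+1}{k}$, the power of $|x-x_0|$ is $n+1-k$, and the factorial is $(n-k)!$ rather than $(n+1-k)!$. Here the cutoff $k\le(n+1)-[(n+1)/N]$ guarantees $n+1-k\ge[(n+1)/N]$, hence $n-k\ge[(n+1)/N]-1$, so I would use $\frac{1}{(n-k)!}=\frac{n+1-k}{(n+1-k)!}\le\frac{n+1}{(n+1-k)!}$ together with $\frac{1}{(n+1-k)!}\le\frac{1}{([(n+1)/N])!}$. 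After factoring out $c_2c_3\cdot(n+1)/([(n+1)/N])!$ and noting the leftover power of $2c_1c_2c_3$ is one less than the power of $|x-x_0|$, I would match the prefactor to $\frac{n+1}{2c_1}$ by writing $\frac{(2c_1c_2c_3)^{n-k}|x-x_0|^{n+1-k}}{}=\frac{1}{2c_1c_2c_3}(2c_1c_2c_3|x-x_0|)^{n+1-k}$, so the constant $c_2c_3/(2c_1c_2c_3)=1/(2c_1)$ emerges and the binomial sum $\sum_{k=0}^{n+1}\binom{n+1}{k}(2c_1c_2c_3|x-x_0|)^{n+1-k}=(1+2c_1c_2c_3|x-x_0|)^{n+1}$ produces $C(n+1,x)$, giving the claimed $\frac{n+1}{2c_1}C(n+1,x)$. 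The main obstacle throughout is bookkeeping the factorial and the summation cutoff so that $([n/N])!$ (respectively $([(n+1)/N])!$) appears correctly; the binomial summation itself is routine once the cutoff-driven lower bound on $n-k$ is correctly identified. Finally, every estimate for $G_n$ and $\widetilde G_n$ follows verbatim since \eqref{G2nPencil}, \eqref{G2nPencil}, \eqref{Gtil2nPencil}, \eqref{Gtil2n+1Pencil} are identical in form to their $F$-counterparts.
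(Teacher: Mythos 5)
Your proof is correct and takes essentially the same approach as the paper: use the summation cutoff to bound the factorial by $1/\bigl(\bigl[\frac nN\bigr]\bigr)!$ (in the odd-tilde case after first writing $\frac{1}{(n-k)!}=\frac{n+1-k}{(n+1-k)!}\le\frac{n+1}{(n+1-k)!}$ and absorbing $c_2c_3$ into $\frac{1}{2c_1}(2c_1c_2c_3)^{n+1-k}$), then complete the truncated sum to the full binomial sum; the paper writes this out only for $\widetilde F_{2n+1}$ and declares the remaining cases similar. The one caution is ordering in your even-index step: the bound $\frac{1}{(n-k)!}\le\frac{1}{\left(\left[n/N\right]\right)!}$ must be applied \emph{before} extending the summation range to $0\le k\le n$, since it fails on the added terms where $n-k<\left[n/N\right]$ --- your parenthetical about the bound holding "throughout the original sum" shows you understand this, but the steps as literally written perform the extension first.
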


\begin{proof}
Consider the inequality \eqref{Ftil2n+1Pencil}. We have $n+1-k\ge \bigl[\frac {n+1}N\bigr]$ hence
\begin{align*}
    |\widetilde F_{2n+1}(x)|&\le \frac{1}{2c_1}
\sum_{k=0}^{n+1-\left[  \frac{n+1}{N}\right]  }
\binom{n+1}{k}\frac{(n+1-k)(2c_{1}c_{2}c_{3})^{n+1-k}\left\vert x-x_{0}\right\vert
^{n+1-k}}{\left(  n+1-k\right)!} \\
    &\le \frac{n+1}{2c_1\bigl(\bigl[\frac {n+1}N\bigr]\bigr)!} \sum_{k=0}^{n+1-\left[  \frac{n+1}{N}\right]  }
\binom{n+1}{k}(2c_{1}c_{2}c_{3})^{n+1-k}\left\vert x-x_{0}\right\vert
^{n+1-k}\le \frac{n+1}{2c_1} C(n+1,x).
\end{align*}
Other inequalities can be obtained similarly.
\end{proof}

\begin{theorem}[Modified SPPS representations for Sturm-Liouville pencils]
\label{Th Modified SPPS copy(1)} Let $p$ and $q$ be such that there exist two
linearly independent solutions $f$ and $g$ of equation \eqref{SL0} such that
$\left\{  f,\,g,\,pf^{\prime},\,pg^{\prime}\right\}  \subset C^{1}[a,b]$ and
$f(x_{0})=g(x_{0})=1$ where $x_{0}$ is any point of $[a,b]$ such that
$p(x_{0})\neq0$. Let the operators $R_{k}$ in \eqref{pencil} be such that
$\left\{  R_{k}[f],\,R_{k}[g]\right\}  \subset C[a,b]$, $k=\overline{1,N}$.
Then the general solution of \eqref{pencil} on $(a,b)$ has the form
\eqref{genmain} where
\begin{equation}
u_{1}=\sum_{n=0}^{\infty}
\lambda^{n}\widetilde{F}_{2n}\qquad\text{and}\qquad u_{2}=
\sum_{n=0}^{\infty}
\lambda^{n}F_{2n+1}. \label{u1u2Pencil}%
\end{equation}
The derivatives of $u_{1}$ and $u_{2}$ have the form%
\begin{equation}
pu_{1}^{\prime}=pf^{\prime}+\sum_{n=1}^{\infty}
\lambda^{n}\left(  pf^{\prime}G_{2n}-\rho\bigl(  pf^{\prime}\widetilde{G}%
_{2n-1}-pg^{\prime}\widetilde{F}_{2n-1}\bigr)  \right)  \label{u1PrimePencil}%
\end{equation}
and
\begin{equation}
pu_{2}^{\prime}=\rho\sum_{n=0}^{\infty}
\lambda^{n}\left(  pg^{\prime}F_{2n}-pf^{\prime}G_{2n}\right)  .
\label{u2PrimePencil}%
\end{equation}
All series in \eqref{u1u2Pencil}--\eqref{u2PrimePencil} converge uniformly on
$[a,b]$ (see also Remark \ref{Rmk P in Der}). The solutions $u_{1}$ and $u_{2}$ satisfy the initial conditions
\begin{equation}
u_{1}(x_{0})=1,\quad u_{1}^{\prime}(x_{0})=f^{\prime}(x_{0}),\quad u_{2}%
(x_{0})=0,\quad u_{2}^{\prime}(x_{0})=\frac{1}{p(x_{0})}.
\label{initialPencil}%
\end{equation}
\end{theorem}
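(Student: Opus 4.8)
The plan is to follow the same three-step scheme used in the proof of Theorem~\ref{Th Modified SPPS}: first establish uniform convergence of all the series in \eqref{u1u2Pencil}--\eqref{u2PrimePencil} together with the twice-differentiated series, then substitute into \eqref{pencil} and verify term by term that the equation holds, and finally read off the initial data \eqref{initialPencil} and deduce linear independence. The genuinely new feature compared with Theorem~\ref{Th Modified SPPS} is that the right-hand side of \eqref{pencil} is now a polynomial of degree $N$ in $\lambda$ carrying the first-order operators $R_k$, so the verification that $u_1,u_2$ are solutions will hinge on a Cauchy-product (reindexing) argument; the convergence, in turn, must be controlled by the sharper estimates of Lemma~\ref{Lemma Estimates Pencil} and Corollary~\ref{Corr Estimates Pencil} rather than by the simpler bounds of Lemma~\ref{Lemma Estimates}.

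For convergence I would invoke Corollary~\ref{Corr Estimates Pencil}: with $C(n,x)=(1+2c_1c_2c_3|x-x_0|)^n/([n/N])!$, the bounds $|\widetilde F_{2n}(x)|\le (n+1)c_3 C(n,x)$ and $|F_{2n+1}(x)|\le 2c_1c_3 C(n,x)$ majorize $u_1$ and $u_2$ by series whose general term carries the super-exponentially growing factor $([n/N])!$ in the denominator; since $[n/N]\to\infty$, this dominates any fixed-base geometric-times-polynomial factor $|\lambda|^{n}(n+1)(1+2c_1c_2c_3|x-x_0|)^{n}$, so both series converge uniformly on $[a,b]$. The series for $pu_1'$, $pu_2'$ I would bound through their explicit forms \eqref{u1PrimePencil}, \eqref{u2PrimePencil} and the corresponding corollary estimates for $G_{2n}$, $\widetilde F_{2n-1}$, $\widetilde G_{2n-1}$, again with the same factorial-tailed majorant. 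To differentiate once more term by term I would replace $(p\widetilde F_{2n}')'$ and $(pF_{2n+1}')'$ by their values from Lemma~\ref{Lemma Derivatives Pencil}, namely $-q\widetilde F_{2n}+\sum_k R_k[\widetilde F_{2n-2k}]$ and $-qF_{2n+1}+\sum_k R_k[F_{2n+1-2k}]$; here the crucial point is that each $R_k[\widetilde F_m]$, $R_k[F_m]$ is expressed via \eqref{RkF2n+1}--\eqref{RkFtil} in terms of the continuous data $R_k[f]$, $R_k[g]$ and the formal powers alone, hence is continuous and uniformly bounded even when $p$ or $f$ vanishes, so the twice-differentiated series converges uniformly and termwise differentiation is legitimate.

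The core of the argument is to verify that $u_1$ solves \eqref{pencil}. Differentiating twice term by term and substituting the second-order relation for $\widetilde F_{2n}$ from Lemma~\ref{Lemma Derivatives Pencil} I would obtain
\[
(pu_1')'+qu_1=\sum_{n=0}^{\infty}\lambda^{n}\sum_{k=1}^{N}R_{k}\bigl[\widetilde F_{2n-2k}\bigr].
\]
On the other hand, applying each $R_k$ to $u_1=\sum_{m}\lambda^{m}\widetilde F_{2m}$ term by term---legitimate because $R_k[\widetilde F_{2m}]$ is continuous and uniformly bounded through \eqref{RkFtil}---and reindexing the resulting Cauchy product by $n=k+m$ gives
\[
\sum_{k=1}^{N}\lambda^{k}R_{k}[u_1]=\sum_{k=1}^{N}\sum_{m=0}^{\infty}\lambda^{k+m}R_{k}\bigl[\widetilde F_{2m}\bigr]=\sum_{n=0}^{\infty}\lambda^{n}\sum_{k=1}^{N}R_{k}\bigl[\widetilde F_{2n-2k}\bigr],
\]
the last step using $\widetilde F_{2n-2k}\equiv 0$ whenever $k>n$. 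The two right-hand sides coincide, so $u_1$ is a solution; the computation for $u_2$ is identical with $\widetilde F_{2m}$ replaced by $F_{2m+1}$ and the odd-$n$ relation of Lemma~\ref{Lemma Derivatives Pencil} used instead. I expect this reindexing---matching the recursive shifts $n\mapsto n-2k+1$ built into Definition~\ref{Def Powers Pencil} against the polynomial pencil $\sum_k\lambda^k R_k$---to be the main point to get exactly right, since it is precisely the mechanism by which the degree-$N$ structure on the right of \eqref{pencil} is reproduced.

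Finally, the derivative formulas \eqref{u1PrimePencil}, \eqref{u2PrimePencil} are just the termwise statements of the first-derivative relations in Lemma~\ref{Lemma Derivatives Pencil} (with the $n=0$ term of $pu_1'$ producing the isolated summand $pf'$, since $\widetilde F_0=f$, $G_0=1$, $\widetilde F_{-1}=\widetilde G_{-1}=0$). The initial conditions \eqref{initialPencil} then follow because every $F_n,G_n,\widetilde F_n,\widetilde G_n$ with $n\ge 1$ is an integral from $x_0$ and thus vanishes at $x_0$, while $\widetilde F_0=f$ with $f(x_0)=1$ and $F_0=G_0=1$; evaluating \eqref{u1PrimePencil}, \eqref{u2PrimePencil} at $x_0$ and using $\rho=1/(p(x_0)(g'(x_0)-f'(x_0)))$ gives $u_1'(x_0)=f'(x_0)$ and $u_2'(x_0)=1/p(x_0)$. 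The nonvanishing Wronskian $W(u_1,u_2)(x_0)=1/p(x_0)$ then yields linear independence, which together with the fact that \eqref{pencil} is a second-order equation shows that $c_1u_1+c_2u_2$ is its general solution and completes the proof.
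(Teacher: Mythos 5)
Your proposal is correct and follows essentially the same route as the paper's own proof: uniform convergence from Corollary \ref{Corr Estimates Pencil} (the paper makes your ``factorial dominates geometric'' remark precise by splitting the majorant into residue classes mod $N$ to get a bound of the form $\bigl(\sum_{n=0}^{N-1}M^n\bigr)\exp(M^N)$), verification that $u_1,u_2$ solve \eqref{pencil} by combining the second-order relations of Lemma \ref{Lemma Derivatives Pencil} with exactly your reindexing $n=k+m$ and the vanishing of negative-index formal powers, and the initial conditions \eqref{initialPencil} plus linear independence read off from the vanishing of all formal powers at $x_0$. You merely spell out a few details the paper leaves terse (legitimacy of termwise differentiation and of applying $R_k$ under the sum via \eqref{RkF2n+1}--\eqref{RkFtil}), which only strengthens the argument.
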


\begin{proof}
Corollary \ref{Corr Estimates Pencil} guarantees the uniform convergence of all the involved series. For example, we have
\begin{align*}
    |u_2|&\le \sum_{n=0}^\infty |\lambda|^n|F_{2n+1}|\le 2c_1c_3\sum_{n=0}^\infty\frac{\bigl(1+2c_1c_2c_3|x-x_0|\bigr)^n|\lambda|^n}{\bigl(\bigl[\frac nN\bigr]\bigr)!}\\
    &= 2c_1c_3\biggl(\sum_{n=0}^{N-1}M^n\biggr)\sum_{m=0}^\infty
    \frac{M^{mN}}{m!}=2c_1c_3\biggl(\sum_{n=0}^{N-1}M^n\biggr)\exp(M^N),
\end{align*}
where $M=\bigl(1+2c_1c_2c_3|x-x_0|\bigr)|\lambda|$.

Due to Lemma \ref{Lemma Derivatives Pencil} we obtain that $u_{1}$ and $u_{2}$
are indeed solutions of (\ref{pencil}) as well as the equalities
(\ref{u1PrimePencil}) and (\ref{u2PrimePencil}). Indeed, let us consider
application of the operator $L$ to $u_{1}$,%
\begin{equation*}
L\left[
\sum_{n=0}^{\infty}
\lambda^{n}\widetilde{F}_{2n}\right]     =\sum_{n=0}^{\infty}
\lambda^{n}\sum_{k=1}^{N}R_{k}\left[  \widetilde{F}_{2n-2k}\right]   =\sum_{k=1}^{N}\lambda^{k}R_{k}\left[
\sum_{n=0}^{\infty}
\lambda^{n-k}\widetilde{F}_{2n-2k}\right]  .
\end{equation*}
Taking into account that the formal powers with negative subindices equal zero
we obtain that $u_{1}$ satisfies (\ref{pencil}). For $u_{2}$ the proof is analogous.

The equalities (\ref{initial}) follow from the fact that all formal powers
$F_{n}$, $G_{n}$, $\widetilde{F}_{n}$ and $\widetilde{G}_{n}$ vanish at
$x=x_{0}$ for any $n\in\mathbb{N}$. Finally, from (\ref{initial}) it follows
that $u_{1}$ and $u_{2}$ are linearly independent.
\end{proof}

\subsection{Spectral shift for pencils}
Let $\lambda_0$ be a fixed complex number and $\lambda=\lambda_0+\Lambda$. The right hand side of equation \eqref{pencil} can be written in the form
\begin{align*}
\sum_{k=1}^{N}\lambda^{k}R_{k}[u]  &  =\sum
_{k=1}^{N}R_{k}[u]\sum_{\ell=0}^{k}\binom{k}{\ell}\lambda_{0}^{\ell}
\Lambda^{k-\ell}\\
&  =\sum_{k=1}^{N}\lambda_{0}^{k} R_{k}[u]+\sum_{k=1}^{N}\Lambda^{k}\sum
_{\ell=0}^{N-k}\binom{k+\ell}{\ell}\lambda_{0}^{\ell}R_{k+\ell}[u],
\end{align*}
therefore equation \eqref{pencil} can be transformed into equation
\begin{equation}\label{pencilshifted}
    L_0u = \sum_{k=1}^{N}\Lambda^{k}\sum
_{\ell=0}^{N-k}\binom{k+\ell}{\ell}\lambda_{0}^{\ell}R_{k+\ell}[u],
\end{equation}
where
\begin{equation*}
    L_0u = (pu')'+qu - \sum_{k=1}^{N}\lambda_{0}^{k} R_{k}[u] = (pu')'+u\biggl(q - \sum_{k=1}^{N}\lambda_{0}^{k} r_{k}\biggr)-u'\sum_{k=1}^{N}\lambda_{0}^{k} s_{k}.
\end{equation*}
Equation \eqref{pencilshifted} is of the form \eqref{pencil} only for some special cases, say all the coefficients $s_k$ are identically zeros or the coefficients $s_k$ are linearly dependent and such that for some special values of $\lambda_0$ the expression $\sum_{k=1}^{N}\lambda_{0}^{k} s_{k}$ equals zero. In other situations equation \eqref{pencilshifted} has nonzero coefficient near $u'$. To overcome this difficulty we multiply all terms of equation \eqref{pencilshifted} by
\[
P(x):=\exp\biggl(-\int_{x_0}^x \frac 1{p(s)}\sum_{k=1}^N \lambda_0^k s_k(s)\,ds\biggr),
\]
and transform it into the equation
\begin{equation}\label{pencilshifted2}
    (\widetilde pu')' +\widetilde q u = \sum_{k=1}^{N}\Lambda^{k}\widetilde R_{k}[u],
\end{equation}
where
\begin{equation}\label{pencilshiftedLcoeffs}
    \widetilde p=p\cdot P,\qquad \widetilde q = P\biggl(q-\sum_{k=1}^N\lambda_0^k r_k\biggr)
\end{equation}
and
\begin{equation}\label{pencilshiftedRcoeffs}
    \widetilde  R_k[u]=\widetilde r_k u+\widetilde s_ku'\quad\text{with}\quad
    \widetilde r_k=P\cdot\sum_{\ell=0}^{N-k}\binom{k+\ell}{\ell}\lambda_0^\ell r_{k+\ell},\quad \widetilde s_k=P\cdot\sum_{\ell=0}^{N-k}\binom{k+\ell}{\ell}\lambda_0^\ell s_{k+\ell}.
\end{equation}

Note that a particular solution of \eqref{pencilshifted2} corresponding to $\Lambda=0$ is the particular solution of \eqref{pencil} corresponding to $\lambda=\lambda_0$. Hence applying Theorem \ref{Th Modified SPPS copy(1)} to equation \eqref{pencilshifted2} and taking into account that $\widetilde p(x_0) = p(x_0)$ we obtain the following corollary.
\begin{corollary}[Spectral shift for the modified SPPS representation]
Let equation \eqref{pencil} admit for $\lambda=\lambda_0$ two linearly independent solutions $f$ and $g$ such that $\left\{  f,\,g,\,pf^{\prime},\,pg^{\prime}\right\}  \subset C^{1}[a,b]$ and
$f(x_{0})=g(x_{0})=1$ where $x_{0}$ is any point of $[a,b]$ such that
$p(x_{0})\neq0$. Let $\frac 1p\sum_{k=1}^N\lambda_0^k s_k\in C[a,b]$ and
$\left\{  R_{k}[f],\,R_{k}[g]\right\}  \subset C[a,b]$, $k=\overline{1,N}$.
Then the general solution of \eqref{pencil} on $(a,b)$ has the form
\eqref{genmain} where
\begin{equation}
u_{1}=\sum_{n=0}^{\infty}
(\lambda-\lambda_0)^{n}\widetilde{F}_{2n}\qquad\text{and}\qquad u_{2}=
\sum_{n=0}^{\infty}
(\lambda-\lambda_0)^{n}F_{2n+1}, \label{u1u2PencilShifted}
\end{equation}
and the functions $\{F_n\}$ and $\{\widetilde F_n\}$ are obtained by applying formulas from Definition \ref{Def Powers Pencil} to the functions $f$, $g$ and $\widetilde R_k[f]$, $\widetilde R_k[g]$.

The derivatives of $u_{1}$ and $u_{2}$ have the form
\begin{equation}
pu_{1}^{\prime}=pf^{\prime}+\sum_{n=1}^{\infty}
(\lambda-\lambda_0)^{n}\left(  pf^{\prime}G_{2n}-\rho\bigl(  pf^{\prime}\widetilde{G}%
_{2n-1}-pg^{\prime}\widetilde{F}_{2n-1}\bigr)  \right)  \label{u1PrimePencilShifted}%
\end{equation}
and
\begin{equation}
pu_{2}^{\prime}=\rho\sum_{n=0}^{\infty}
(\lambda-\lambda_0)^{n}\left(  pg^{\prime}F_{2n}-pf^{\prime}G_{2n}\right)  .
\label{u2PrimePencilShifted}%
\end{equation}
All series in \eqref{u1u2PencilShifted}--\eqref{u2PrimePencilShifted} converge uniformly on
$[a,b]$. The solutions $u_{1}$ and $u_{2}$ satisfy the same initial conditions \eqref{initialPencil}.
\end{corollary}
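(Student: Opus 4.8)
The plan is to recognize the statement as a direct application of Theorem \ref{Th Modified SPPS copy(1)} to the transformed equation \eqref{pencilshifted2}, which was already derived from \eqref{pencil} in the preceding discussion. First I would confirm that \eqref{pencilshifted2} is again a Sturm-Liouville pencil of the form \eqref{pencil}, now with spectral parameter $\Lambda=\lambda-\lambda_0$, leading coefficient $\widetilde p$, potential $\widetilde q$, and first-order operators $\widetilde R_k$ given by \eqref{pencilshiftedLcoeffs}--\eqref{pencilshiftedRcoeffs}. The crucial structural fact, established in the derivation of \eqref{pencilshifted2}, is that a solution of \eqref{pencil} at $\lambda=\lambda_0$ is precisely a solution of \eqref{pencilshifted2} at $\Lambda=0$; hence $f$ and $g$ are linearly independent solutions of $(\widetilde p v')'+\widetilde q v=0$, and are exactly the particular solutions needed to run Theorem \ref{Th Modified SPPS copy(1)} on \eqref{pencilshifted2}.

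Next I would verify that the hypotheses of Theorem \ref{Th Modified SPPS copy(1)} transfer to \eqref{pencilshifted2}. Because $\frac 1p\sum_{k=1}^N\lambda_0^k s_k\in C[a,b]$, the integrating factor $P$ belongs to $C^1[a,b]$ and never vanishes; in particular $P(x_0)=1$, so that $\widetilde p(x_0)=p(x_0)\neq 0$ and the constant $\rho$ is unchanged. Writing $\widetilde p f'=(pf')\,P$ and $\widetilde p g'=(pg')\,P$, the assumption $\{f,g,pf',pg'\}\subset C^1[a,b]$ together with $P\in C^1[a,b]$ gives $\{f,g,\widetilde p f',\widetilde p g'\}\subset C^1[a,b]$. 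Finally, since $\widetilde R_k[f]=P\sum_{\ell=0}^{N-k}\binom{k+\ell}{\ell}\lambda_0^\ell R_{k+\ell}[f]$ (and likewise for $g$), the hypothesis $\{R_k[f],R_k[g]\}\subset C[a,b]$ yields $\{\widetilde R_k[f],\widetilde R_k[g]\}\subset C[a,b]$. Thus Theorem \ref{Th Modified SPPS copy(1)} applies verbatim to \eqref{pencilshifted2}, and its formal powers are exactly those produced by Definition \ref{Def Powers Pencil} fed with $f$, $g$, $\widetilde R_k[f]$, $\widetilde R_k[g]$.

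Applying the theorem then yields the general solution of \eqref{pencilshifted2} as $c_1u_1+c_2u_2$ with the series \eqref{u1u2PencilShifted} in powers of $\Lambda=\lambda-\lambda_0$, and uniform convergence on $[a,b]$ is inherited from the theorem. Since \eqref{pencilshifted2} and \eqref{pencil} have the same solution set (they differ only by the nonvanishing factor $P$), these $u_1,u_2$ are also the announced solutions of \eqref{pencil}. The one point requiring care---and the place I would expect the only genuine subtlety---is reconciling the derivative formulas: Theorem \ref{Th Modified SPPS copy(1)} delivers expressions for $\widetilde p\,u_1'$ and $\widetilde p\,u_2'$, whereas \eqref{u1PrimePencilShifted}--\eqref{u2PrimePencilShifted} are stated with $p$. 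I would resolve this by observing that $\widetilde p=pP$ occurs as a common multiplicative factor in every term on the right-hand side of the theorem's derivative formulas, so factoring it out and dividing (valid wherever $p\neq 0$) removes it entirely, after which multiplying by $p$ reinstates precisely the factors $pf'$, $pg'$ appearing in \eqref{u1PrimePencilShifted}--\eqref{u2PrimePencilShifted}. The initial conditions \eqref{initialPencil} then follow unchanged, since all formal powers vanish at $x_0$ and $u_2'(x_0)=1/\widetilde p(x_0)=1/p(x_0)$.
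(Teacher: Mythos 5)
Your proof is correct and follows essentially the same route as the paper: the corollary is obtained by applying Theorem \ref{Th Modified SPPS copy(1)} to the transformed equation \eqref{pencilshifted2}, noting that $f$, $g$ solve it for $\Lambda=0$ and that $\widetilde p(x_{0})=p(x_{0})$. Your additional verifications (that $P\in C^{1}[a,b]$ is nonvanishing, that the hypotheses transfer to $\widetilde p$, $\widetilde R_{k}$, and that the factor $P$ cancels in the derivative formulas) simply make explicit what the paper leaves to the reader.
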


\section{Numerical solution of spectral problems}\label{Section4}
\subsection{The general scheme}
The general scheme of using the modified SPPS representation for the solution of spectral problems for equation \eqref{SL} and more general \eqref{pencil} is similar to that for the original SPPS representation, see \cite{KrPorter2010}, \cite{KrT2013}.

Consider boundary conditions
\begin{gather}\label{BC1}
    \alpha_a u(a) + \beta_a p(a)u'(a)=0\\
    \alpha_b u(b) + \beta_b p(b) u'(b)=0,\label{BC2}
\end{gather}
where $\alpha_a$, $\beta_a$, $\alpha_b$ and $\beta_b$ are complex numbers such that $|\alpha_a|+|\beta_a|\ne 0$ and $|\alpha_b|+|\beta_b|\ne 0$. Suppose that the function $p$ is continuous at one of the endpoints and is different from zero at that endpoint. We may assume that $a$ is such endpoint. Let $f$ and $g$ be two linearly independent solutions of \eqref{SL0} satisfying $f(a)=g(a)=1$, and denote $h:=f'(a)$. Consider the systems of functions $\{  F_{n}\}  $, $\{  \widetilde{F}_{n}\}$, $\{  G_{n}\}$, $\{\widetilde{G}_{n}\}$ constructed from the solutions $f$ and $g$ by Definition \ref{Def Systems Fn} or by Definition \ref{Def Powers Pencil} using the point $x_0=a$.
Then due to the initial conditions \eqref{initial} or \eqref{initialPencil} the solution $u(x;\lambda)$ defined by
\begin{equation*}
    u(x;\lambda)=\beta_a u_1(x;\lambda) - (\alpha_a+\beta_a h) u_2(x;\lambda),
\end{equation*}
where the functions $u_1$ and $u_2$ are given by \eqref{u1u2} or \eqref{u1u2Pencil}, satisfies the first boundary condition \eqref{BC1}. Hence the second boundary condition \eqref{BC2} gives us the characteristic function \begin{equation}\label{CharFun}
    \Phi(\lambda):=\alpha_b u(b;\lambda) + \beta_b p(b) u'(b;\lambda).
\end{equation}
The set of zeros of the function $\Phi$ coincides with the set of eigenvalues of the spectral problem \eqref{BC1}, \eqref{BC2} for the equation \eqref{pencil}. Truncating the series in \eqref{CharFun} we obtain a polynomial approximating the characteristic function. The roots of this polynomial closest to zero give us approximations of the eigenvalues. The Rouche theorem guarantees that these roots are indeed the approximations to the eigenvalues and are not spurious roots appearing as a result of the truncation of the series.

In the case when the function $p$ is not continuous or equals zero at the endpoints, we cannot calculate the formal powers starting from one of the endpoints and cannot take advantage of the initial conditions \eqref{initial} or \eqref{initialPencil}. Instead we consider the general solution $u=c_1u_1 + c_2u_2$ constructed using some point $x_0\in (a,b)$. Then a point $\lambda$ is an eigenvalue of the problem if and only if the determinant of the following system
\begin{equation}\label{NastyCharEq}
    \det \left(
           \begin{array}{cc}
             \alpha_a u_1(a;\lambda)+\beta_ap(a) u_1'(a;\lambda) & \alpha_a u_2(a;\lambda)+\beta_ap(a) u_2'(a;\lambda) \\
             \alpha_b u_1(b;\lambda)+\beta_bp(b) u_1'(b;\lambda) & \alpha_b u_2(b;\lambda)+\beta_bp(b) u_2'(b;\lambda)
           \end{array}
         \right)=0,
\end{equation}
is equal to zero, see, e.g., \cite[\S 1.3]{Marchenko},
and we can proceed as before: taking the partial sums of the involved series, obtaining a polynomial approximating the characteristic equation and choosing the roots closest to zero.

\subsection{Numerical examples for Sturm-Liouville problems}
In the paper \cite{KrPorter2010} the authors illustrated the numerical performance of the SPPS method for solving Sturm-Liouville spectral problems. Since the difference between the original SPPS representation and the modified SPPS representation consists only in the way of calculating coefficients, the performance of the modified SPPS method is similar to that of the SPPS method when all the involved recursive integrals can be calculated equally precise. Usually it is the case when a particular solution $f$ and functions $1/p$, $r$ do not grow rapidly and are sufficiently separated from zero. In the opposite case one may expect a better performance of the modified SPPS method. One of the examples with a rapidly growing particular solution $f$, the Coffey-Evans equation, is considered in \cite{KrT2013} where we observe that a combination of the Clenshaw-Curtis integration formula with the formulas \eqref{D2}--\eqref{D8} allows us to compute twice as many formal powers in comparison with the formulas \eqref{Xgen2}, \eqref{Xgen3}.

In this subsection we consider several ``nasty'' examples (according to \cite[Appendix B]{Pryce}) involving unbounded however absolutely integrable functions $1/p$, $r$, $q$. Even though some of the problems do not satisfy the conditions of Theorem \ref{Th Modified SPPS}, the modified SPPS method demonstrates an excellent accuracy, meanwhile the performance of the SPPS method is considerably worse for the problems with unbounded functions $1/p$ or $r$. Moreover, the numerical implementation of the SPPS method is several times slower for these problems due to the necessity to use complex-valued functions in order to obtain non-vanishing particular solutions.

\begin{example}\label{Ex10Pryce}
Consider the following problem (Problem 10 from \cite{Pryce})
\begin{equation*}\label{EqEx10Pryce}
\begin{cases}
-\left(\sqrt{1-x^2}u'\right)'=\lambda u,\\
\sqrt{1-x^2}u'(x)\big|_{x=-1}=0,\quad u(1)=0,
\end{cases}
\end{equation*}
a problem with  a ``nasty'' $p=\sqrt{1-x^2}$ and ``good'' $q$ and $r$.

Since the function $p$ equals zero at both endpoints, we used the determinant approach described in the previous subsection.

The functions $f(x)=1$ and $g(x)=1+\arcsin(x)$ were chosen as two particular solutions of equation \eqref{SL0} satisfying the conditions of Theorem \ref{Th Modified SPPS}.

We obtained approximate eigenvalues of the problem applying the spectral shift technique, on each step finding one new approximate eigenvalue as the root of the polynomial approximating the characteristic equation closest to the current spectral shift center and using this value as the spectral shift for the next step. On each step we computed $N=100$ formal powers using machine precision arithmetics in MATLAB with $x_0=0$ and $M=2\cdot 10^5-1$ points for the Newton-Cottes 6 points integration scheme. We also tested the ``old'' SPPS method on this problem. In order to deal with the zeros of the function $p$ at the endpoints we approximated it by a function having small, however non-zero values at the endpoints. The results from the SPPS representation were obtained using the same parameters and the strategy for the spectral shift, with the only difference that we have taken a complex-valued combination $u_1+iu_2$ on each step for a particular solution to be non-vanishing. The obtained results are presented in Table \ref{Ex10PryceTable1} together with the values from \cite{Pryce} and the results produced by SLEIGN2 package \cite{BEZ2}. Another well-known package, MATSLISE \cite{LVV}, can not solve this problem at all. Unfortunately the exact characteristic equation for this problem is unknown. Note that the results of the modified SPPS method are in a good agreement with those presented in \cite{Pryce}, meanwhile the results produced by SLEIGN2 differ in 3rd--5th decimal place, the results of the SPPS method are even worse.

\begin{table}
\centering
\begin{tabular}
{ccccc}\hline
$n$ & $\lambda_n$ (\cite{Pryce}) & $\lambda_{n}$ (our method) & $\lambda_{n}$ (``old'' SPPS method) & $\lambda_n$ (SLEIGN2) \\\hline
0 & 0.3856819 & 0.385681872027002 & 0.3863 & 0.385684539\\
1 & & 3.80741155419017  & 3.8114 & 3.807427952\\
2 & &10.6772827352614  & 10.6867 & 10.677320922\\
3 & &20.9871308475868  & 21.0036 & 20.987197576\\
5 & &51.9221036193997  & 51.9570 &  51.922245020\\
10 & &189.421910262487  &  189.5241 & 189.422324959\\
15 & &412.863500805267  & 413.0592 & 412.864294034\\
20 & &722.245619500433  & 722.5567 & 722.246883258\\
24 & 1031.628 & 1031.62824937392 &  1032.047 & 1031.629950116\\
\hline
\end{tabular}
\caption{The eigenvalues of the Problem 10 from \cite{Pryce} (Example \ref{Ex10Pryce}).}
\label{Ex10PryceTable1}%
\end{table}
\end{example}

\begin{example}\label{Ex09Pryce}
Consider the following problem (Problem 9 from \cite{Pryce}). The interval is $[-1,1]$, ``nice'' $p=1/\sqrt{1-x^2}$ and $q=0$, ``nasty'' $r=1/\sqrt{1-x^2}$ with the Dirichlet boundary conditions $u(-1)=u(1)=0$.

We tested the performance of the Darboux-associated equations approach proposed in Subsection \ref{subsect Darboux} and Remark \ref{Rmk Darboux} on this problem. Even using the spectral shift technique, the results for the higher eigenvalues were mediocre, see Table \ref{Ex09PryceTable1}. Such behavior of the method can be explained by the additional steps related with the Darboux associated equations, namely construction of the potentials $q_{1/f}$ and of a second particular solution of these associated equations. Obtained potentials $q_{1/f}$ possessed large peaks inside the interval leading to large errors in the calculated formal powers.

Additionally we applied the direct approach to check whether our method can be applied in the situations not covered by Theorem \ref{Th Modified SPPS}. For that we chose $f(x)=1$ and $g(x)=1+\bigl(x\sqrt{1-x^2}+\arcsin x\bigr)/2$ as particular solutions of \eqref{SL0} satisfying the conditions of Theorem \ref{Th Modified SPPS}, changed values of $r$ at the endpoints to be equal to some rather large values and proceeded exactly as described in Example \ref{Ex10Pryce}. The obtained results are presented in Table \ref{Ex09PryceTable1} and are in an excellent agreement with those reported in \cite{Pryce}. Some of the eigenvalues computed by SLEIGN2 package differ from our results in 3-5th decimal place. Also we tested the performance of the SPPS method. Produced eigenvalues are closer than in the previous example to the obtained by the modified SPPS method and agree up to 4-6 decimal places.
\begin{table}
\centering
\begin{tabular}
{ccccc}\hline
$n$ & $\lambda_n$ (\cite{Pryce}) & $\lambda_{n}$ (our method) & $\lambda_{n}$ (our method, based on&  $\lambda_n$ (SLEIGN2) \\
& & & Darboux-associated eqns.) &\\
\hline
0 & 3.559279966 & 3.55927997532677 &  3.559280003 &   3.559279975351\\
1 & & 12.1562946865237 & 12.15629481 &   12.15637\\
2 & & 25.7034532288478 & 25.70345354 &   25.70345322896\\
3 & & 44.1919717455476 & 44.19197235 &   44.19206\\
5 & & 95.9831209203069 & 95.98312252 &   95.98332\\
9 & 258.8005854& 258.800585373152 & 258.8005909 &  258.7976\\
14 & & 573.369367026965 & 573.36944  &  573.3693670289 \\
19 & & 1011.31532988447 & 1011.19    & 1011.3153298853\\
24 & 1572.635284 & 1572.63528434735 & --         & 1572.6352843481\\
\hline
\end{tabular}
\caption{The eigenvalues of the Problem 9 from \cite{Pryce} (Example \ref{Ex09Pryce}).}
\label{Ex09PryceTable1}%
\end{table}
\end{example}

\begin{example}\label{Ex11Pryce}
Consider the following problem (Problem 11 from \cite{Pryce})
\begin{equation*}\label{EqEx11Pryce}
\begin{cases}
-u''+u\ln x=\lambda u,\\
u(0)=u(4)=0.
\end{cases}
\end{equation*}
Again, this problem is not covered by Theorem \ref{Th Modified SPPS}. Nevertheless we checked the performance of our method on this problem. Two particular solutions of equation \eqref{SL0} were computed using the SPPS representation. After that we proceeded exactly as in Examples \ref{Ex10Pryce} and \ref{Ex09Pryce} using the point $x_0=2$ to calculate the formal powers. We also checked the performance of the SPPS method. Obtained results together with the results from \cite{Pryce} and the results produced by SLEIGN2 package are presented in Table \ref{Ex11PryceTable1}.
\begin{table}
\centering
\begin{tabular}
{ccccc}\hline
$n$ & $\lambda_n$ (\cite{Pryce}) & $\lambda_{n}$ (our method) & $\lambda_{n}$ (old SPPS method) & $\lambda_n$ (SLEIGN2) \\
\hline
0 & 1.1248168097 & 1.12481680968989 &  1.1248168096898 & 1.12481680982\\
1 & & 2.99094198359879  & 2.99094198359867 &  2.990941998\\
2 & & 6.03307162455419  &   6.03307162455413 &  6.03307134\\
4 & & 15.8644572215756  &  15.8644572215752 & 15.86445693\\
9 & & 62.0987975024207  &   62.0987975024165 & 62.0987975072\\
24 & 385.92821596 & 385.928215961012 & 385.928215961016 & 385.928215990\\
\hline
\end{tabular}
\caption{The eigenvalues of the Problem 11 from \cite{Pryce} (Example \ref{Ex11Pryce}).}
\label{Ex11PryceTable1}%
\end{table}
\end{example}

\subsection{High-precision evaluation of eigenvalues}
In this subsection we show that the modified SPPS method can be successfully applied to the calculation of eigenvalues of Sturm-Liouville spectral problems with a high accuracy. However, in contrast to the method proposed in \cite{KrT2013}, the accuracy of the eigenvalues rapidly deteriorates with the eigenvalue index. The situation can be improved to some extent applying the spectral shift technique allowing one to obtain hundreds of highly accurate approximate eigenvalues.

\begin{example}\label{ExHighAccuracy}
Consider the following spectral problem (the second Paine problem, \cite{Paine, Pryce})
\begin{equation*}
\begin{cases}
-u''+\frac{1}{(x+0.1)^2} u=\lambda u, & 0\le x\le \pi,\\
u(0,\lambda)=0,\quad  u(\pi,\lambda)=0. &
\end{cases}
\end{equation*}
This problem was treated in \cite{KrT2013} and appears to be rather tough requiring a large number of formal powers to be used in order to compute highly accurate eigenvalues. In \cite{KrT2013} we were able to achieve the accuracy of order $10^{-43}\div10^{-42}$ almost independent of the eigenvalue index for several thousands of eigenvalues. Further increase of accuracy required significant increase of all the parameters involved (number of the formal powers, precision and the number of points used for the integration). In this example we show that the modified SPPS method allows us to improve the accuracy to the order of $10^{-150}$ using the similar set of parameters however only for the first 187 eigenvalues.

First we verified the precision of the coefficients of the polynomial approximating the exact characteristic function. These coefficients are nothing more than the values of the formal powers at the right endpoint divided by the corresponding factorials. We compared the different methods of indefinite numerical integration used for evaluating the formal powers. Up to now we used three different methods of indefinite numerical integration, see \cite{CKT2013}, \cite{KKTT} and \cite{KrT2013}. The first is the modification of the Newton-Cottes 7th order six point rule, the second is the integration of a spline approximating a formal power and the third is the Clenshaw-Curtis integration based on the approximation of a function by the Tchebyshev polynomials. The computation time required by the second mentioned method highly exceeds the computation time required by the first method providing only a slight improvement of the accuracy. For that reason in the present work we consider only the first and the third integration methods. All the computations were performed in Wolfram Mathematica 8.

For each of the methods a parameter $M$ corresponds to the number of smaller subdivision intervals on the segment $[0,\pi]$ used for numerical integration, i.e., the integrand function was represented by its values in $M+1$ points. For the Clenshaw-Curtis integration we used for $M$ values $512$, $1024$, $2048$ and $3072$. For each of the values of $M$ we computed two particular solutions using the SPPS representation and verified their precision against the exact particular solution $u_0(x)=(1+10x)^{(1+\sqrt{5}) /2}$. The maximum absolute errors were $3.9\cdot 10^{-85}$, $7.5\cdot 10^{-165}$, $1.7\cdot 10^{-323}$ and $8.2\cdot 10^{-482}$ respectively. Therefore we used $100$, $200$, $400$ and $600$ digit arithmetic respectively for the calculation of the formal powers.

For the Newton-Cottes integration scheme we used $M=10^4$, $5\cdot 10^4$ and $25\cdot 10^4$ and performed computations in machine-precision and $64$-digit arithmetics, in both cases using exact particular solutions.

We compared the computed coefficients (values of the formal powers at the right endpoint divided by the corresponding factorials) against the same values produced by means of the Clenshaw-Curtis integration formula with $M=4096$. The relative errors of the formal powers are presented on Figure \ref{ExHPFigCoeffs}. Note the different behavior of the errors. For the Clenshaw-Curtis integration the errors start from much lower values coinciding with the errors of the particular solutions, however rapidly increasing with the increase of the formal power number. For the Newton-Cottes integration the errors in machine-precision are almost constant and are slowly growing in the high precision arithmetic.

\begin{figure}[h]
\centering
\begin{tabular}{cc}
\includegraphics[
height=1.426in,
width=3in,
bb=0 0 305 145
]
{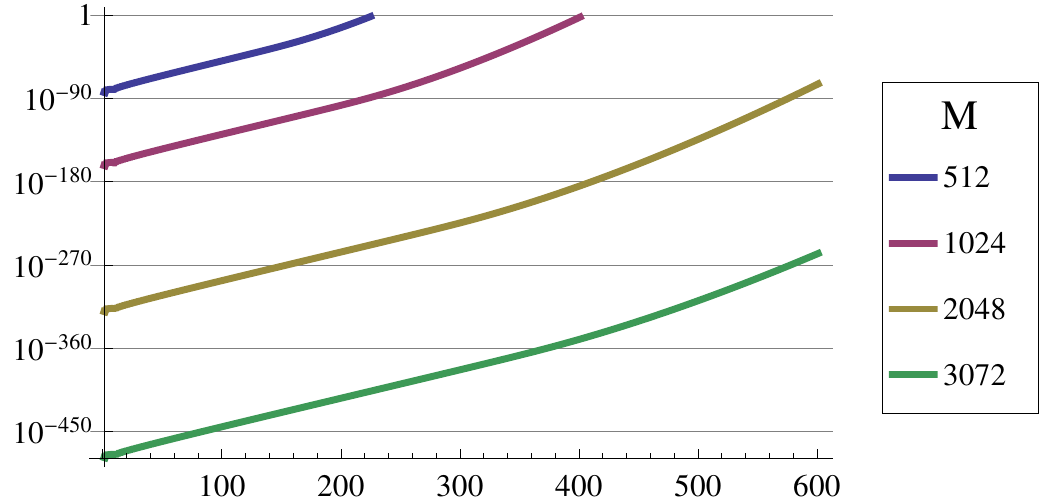}
&
\includegraphics[
height=1.425in,
width=3in,
bb=0 0 300 144
]
{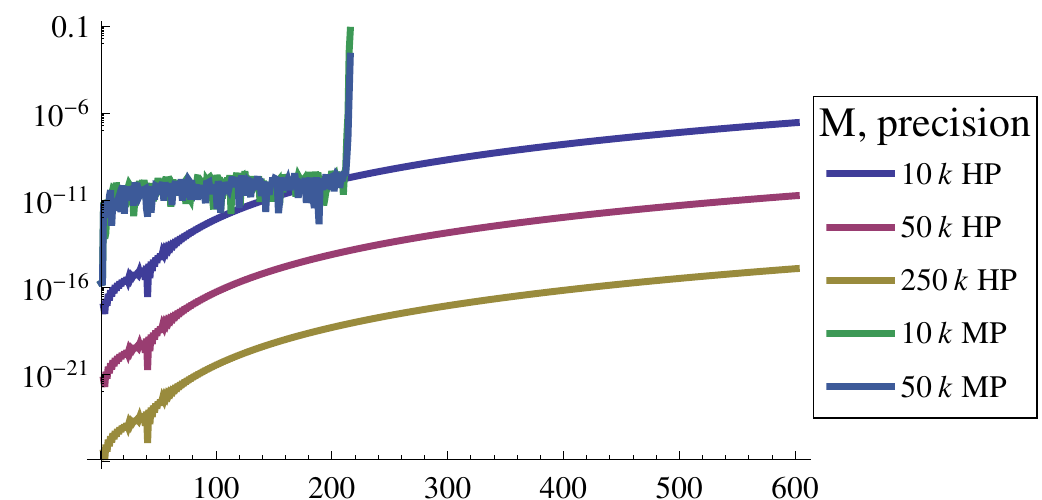}
\end{tabular}
\caption{Relative errors of the first 600 formal powers in Example \ref{ExHighAccuracy} obtained using Clenshaw-Curtis integration (on the left graph) and using the Newton-Cortes integration (on the right graph). $M$ corresponds to the number of points used for representing the integrand, HP means 64 digit precision and MP means machine precision.}\label{ExHPFigCoeffs}
\end{figure}

Using the obtained coefficients we calculated the roots of the polynomial approximating eigenvalues and compared them to the exact ones (see \cite[Example 26]{KrT2013} for the expression of the characteristic equation). Since the problem possesses only real eigenvalues, all roots of the polynomial having large imaginary part were discarded as spurious roots. On Figure \ref{ExHPFigRoots} we present the graphs of the absolute errors of the approximate eigenvalues obtained from the truncation of the modified SPPS representation using $N=100$, $200$, $400$ and $600$ formal powers and without application of the spectral shift.


\begin{figure}[h]
\centering
\begin{tabular}{cc}
\includegraphics[
height=2in,
width=3in,
bb=0 0 216 144
]
{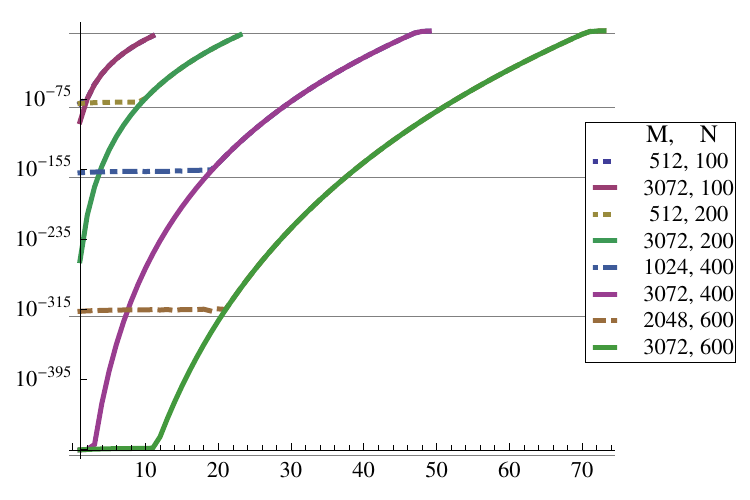}
&
\includegraphics[
height=2in,
width=3in,
bb=0 0 216 144
]
{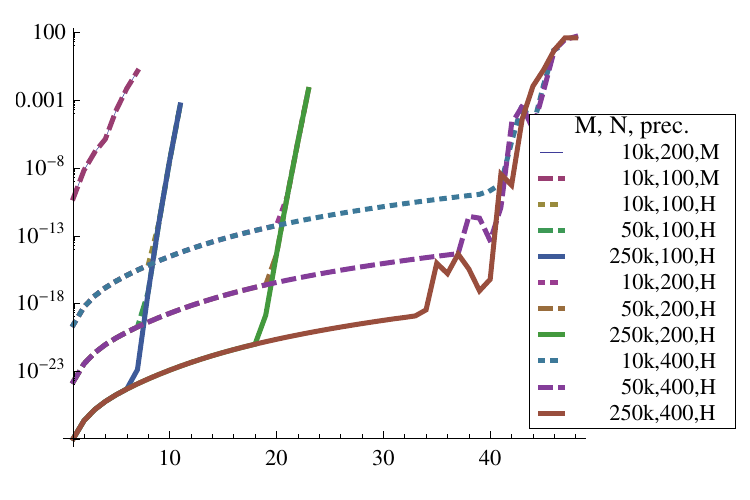}
\end{tabular}
\caption{Absolute errors of the approximate eigenvalues in Example \ref{ExHighAccuracy} obtained using different number of formal powers for approximating the exact characteristic equation (parameter $N$) and using Clenshaw-Curtis integration (on the left graph) and using the Newton-Cortes integration (on the right graph). $M$ corresponds to the number of points used for representing the integrand, H means 64 digit precision and M means machine precision. The horizontal lines on the left graph show the errors of the particular solutions used for the calculation of the formal powers.}\label{ExHPFigRoots}
\end{figure}

Several observations can be made regarding the presented graphs. First, the number of eigenvalues which can be approximately calculated from the truncated SPPS representation depends on the number of used formal powers and almost does not depend on the accuracy of the formal powers. Second, the accuracy of the formal powers has a great influence on the accuracy of the first eigenvalues. The errors of the first approximate eigenvalues are close to the errors achieved while calculating the particular solutions and the first several formal powers, meanwhile the errors of the larger eigenvalues remain roughly constant for different computation precisions used.

\begin{figure}[h]
\centering
\includegraphics[
height=2.8in,
width=4.8in,
bb=0 0 412 240
]
{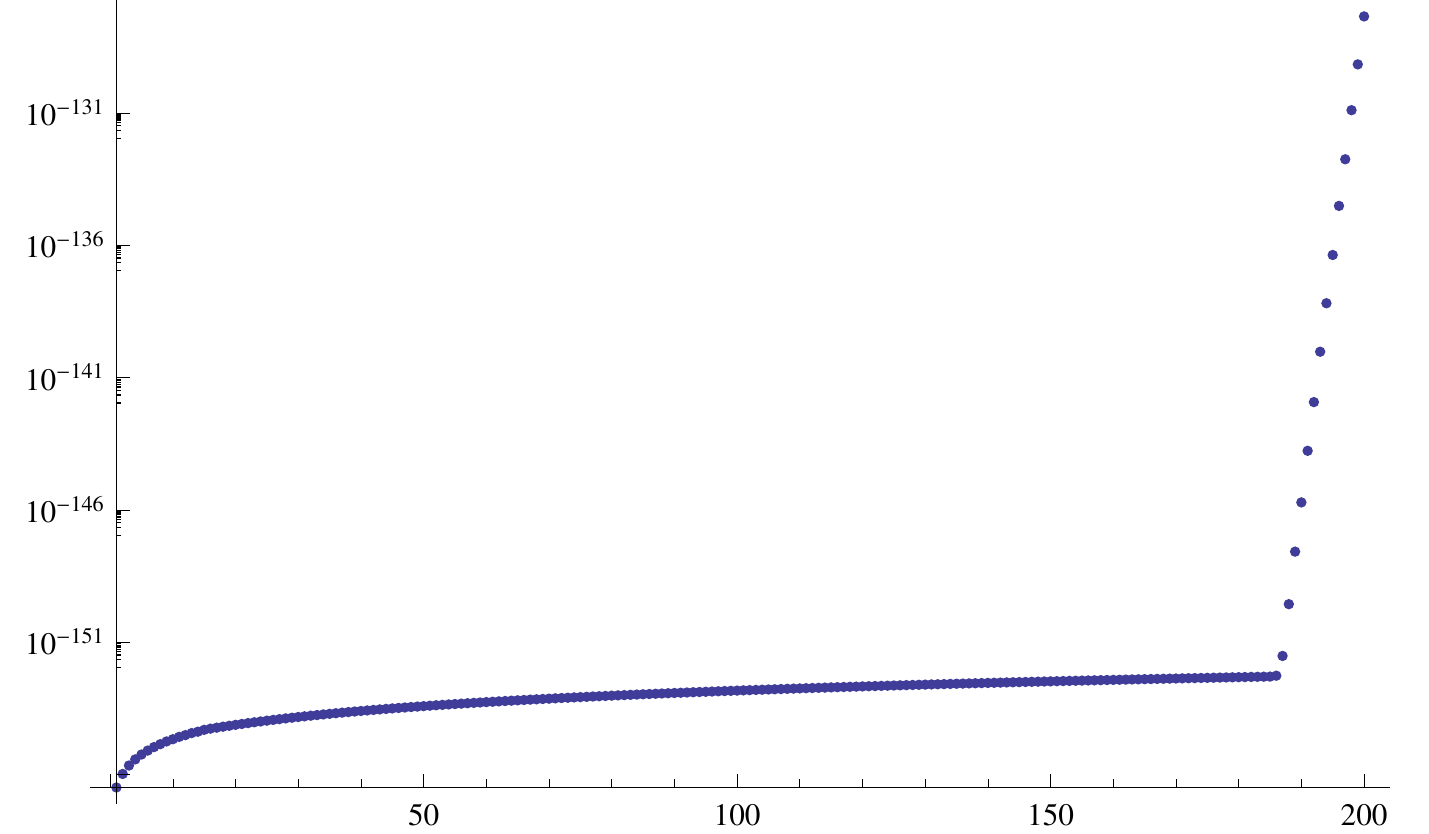}
\caption{Absolute errors of the first 200 approximate eigenvalues in Example \ref{ExHighAccuracy} obtained by the modified SPPS method applying the spectral shift technique.}\label{ExHPFigEigenvalues}
\end{figure}

Finally we computed the approximate eigenvalues applying the spectral shift technique. We performed spectral shifts using values $\lambda_0=250n$, $n=1,\ldots,200$ and on each step calculating $N=400$ formal powers with the help of the Clenshaw-Curtis integration with $M=1024$ and 200-digit arithmetic. The absolute errors of the first 200 found eigenvalues are presented on Figure \ref{ExHPFigEigenvalues}. As one can see, the errors are slowly growing remaining smaller than $10^{-150}$ up to the eigenvalue number 186, for the higher indices the accuracy rapidly deteriorates.
\end{example}

\subsection{Spectral problems for pencils}
In this subsection we consider several examples in which the right-hand side of equation \eqref{pencil} includes a derivative of the unknown function at the spectral parameter or depends polynomially on the spectral parameter.

The first two considered problems are from \cite{AT2007}, \cite{AT2013} and belong to so-called second-order linear pencils.

\begin{example}\label{ExLinearPencil3}
Consider the following problem \cite[Example 3.3]{AT2013}.
\begin{equation}
\label{EqLinearPencil3}
\begin{cases}
-y''+x^2 y=\lambda(2iy'+y), & 0\le x\le 1,\\
y'(0)+i\lambda y(0)=0, & y'(1)+i\lambda y(1)=0.
\end{cases}
\end{equation}
The problem is self-adjoint and possesses a discrete real spectrum. With the help of Mathematica software we found the characteristic equation of the problem is given by the expression
\[
\left(\lambda
   ^2+\lambda -1\right) \, _1F_1\left(\frac{1}{4}
   (5-\lambda  (\lambda
   +1));\frac{3}{2};1\right)+\,
   _1F_1\left(\frac{1}{4} (1-\lambda  (\lambda
   +1));\frac{1}{2};1\right)=0,
\]
where $_1F_1$ is the Kummer confluent hypergeometric function.

We computed two particular solutions of \eqref{SL0} using the SPPS representation with $N=100$ formal powers and $M=10001$ points for the evaluation of the involved integrals by the Newton-Cottes 6 point formula, afterwards we used these particular solutions to compute $N=100$ formal powers and to find the roots of the polynomial approximating the exact characteristic equation, spectral shift technique was used to obtain the higher index eigenvalues. The obtained eigenvalues together with the exact ones and the results from \cite{AT2007} and \cite{AT2013} are presented in Table \ref{TabLinearPencil3}. Note that our results are significantly better than the results from \cite{AT2007} and are comparable with the ones from \cite{AT2013}. However it should be mentioned that the approximations of the characteristic function of the problem \eqref{EqLinearPencil3} from \cite{AT2007} and \cite{AT2013} do not lead to an automatic approximation of the eigenfunctions; require some analytic precomputation as well as the solution of a large number of initial value problems which the authors of \cite{AT2007} and \cite{AT2013} performed by means of Mathematica with a required accuracy. Meanwhile the results delivered by the modified SPPS method were obtained using machine precision, did not require any analytic precomputation and include the eigenfunctions as well.
\begin{table}
\centering
\begin{tabular}
{ccccc}\hline
$n$ & $\lambda_n$ (our method) & $\lambda_{n}$ (exact) & $\lambda_{n}$ (\cite{AT2007}) & $\lambda_n$ (\cite{AT2013}) \\
\hline
-25& -75.90209254554286 & -75.90209254550119 &  & \\
-10& -28.78465916307922 & -28.78465916308716 &  & \\
-5 & -13.08969157402720 & -13.08969157402805 &  & \\
-3 & -6.830508103259227 & -6.830508103259007 &  & \\
-2 & -3.741923372554198 & -3.741923372554521 &  -3.7419233703827506 & -3.7419233725545213\\
-1 & -1.258249036460409 & -1.2582490364604132 & -1.2582490390569894 &  -1.2582490364604124\\
0 & 0.258249036460413 & 0.2582490364604132  &   0.2582490344106217 &  0.25824903646041525\\
1 & 2.741923372554577 & 2.741923372554521  &  2.741923371301097 & 2.7419233725545213\\
2 &  5.830508103259199& 5.830508103259007  &   5.830508103873908 & 5.8305081032590085\\
3 & 8.955988815983204 & 8.955988815983707 &  & \\
5 & 15.22658797653006 & 15.22658797653187 &  & \\
10& 30.92521763113015 & 30.92521763112857 &  & \\
25& 78.04353040058767 & 78.04353040632336 &  & \\
\hline
\end{tabular}
\caption{The eigenvalues of the problem \eqref{EqLinearPencil3} (Example \ref{ExLinearPencil3}).}
\label{TabLinearPencil3}
\end{table}
\end{example}

\begin{example}\label{ExLinearPencil1}
Consider the following boundary value problem \cite[Example 3.1]{AT2013}.
\begin{equation}
\label{EqLinearPencil1}
\begin{cases}
-y''+q(x) y=\lambda(2iy'+y), & 0\le x\le 1,\\
y(0)=0,\quad y'(1)+i\lambda y(1)=0, &
\end{cases}
\end{equation}
where
\[
q(x)=\begin{cases}
1, & 0\le x\le 1/2,\\
0, & 1/2<x\le 1.
\end{cases}
\]
This problem is not covered by Theorem \ref{Th Modified SPPS copy(1)}, however it can be solved by the modified SPPS representation according to Remark \ref{RmkPiecewiseContinuousCoeffs}. There seems to be some error in \cite{AT2007}, \cite{AT2013} because the reported results are not the eigenvalues of the problem \eqref{EqLinearPencil1}. With the help of Wolfram Mathematica we found that the characteristic equation of the problem \eqref{EqLinearPencil1} is given by the expression
\begin{equation}\label{EqLinearPencil1ChEq}
\sqrt{\lambda ^2+\lambda}
   \tanh \left(\frac{1}{2} \sqrt{-\lambda
   (\lambda +1)}\right) \tanh
   \left(\frac{1}{2} \sqrt{1-\lambda
   (\lambda +1)}\right)+\sqrt{\lambda ^2+\lambda -1}=0.
\end{equation}

We applied the modified SPPS method to this problem using the spectral shift technique computing both the particular solutions and the first $100$ formal powers using $M=10001$ for all involved integrals and performing integrations separately on each segment of continuity of the potential $q$. The calculated eigenvalues together with the exact ones obtained from \eqref{EqLinearPencil1ChEq} and with the resulted absolute errors are presented in Table \ref{TabLinearPencil1}.
\begin{table}
\centering
\begin{tabular}
{cccc}\hline
$n$ & $\lambda_n$ (our method) & $\lambda_{n}$ (exact) & Abs. error \\
\hline
-25 & -77.4738498134661 & -77.4738498206540 & $7.8\cdot 10^{-9}$ \\
-10 & -30.3579741391681 & -30.3579741391157 & $6.2\cdot 10^{-11}$ \\
-5 & -14.6624304044055 &  -14.6624304044072 & $1.9\cdot 10^{-12}$ \\
-3 & -8.39761752583675 &  -8.39761752583497 & $3.9\cdot 10^{-12}$ \\
-2 & -5.30260260783015 &  -5.30260260783027 & $2.5\cdot 10^{-12}$ \\
-1 &  -2.20110385479012 &  -2.20110385479002  & $1.1\cdot 10^{-13}$\\
0 & 1.20110385479006 & 1.20110385479002  & $3.7\cdot 10^{-14}$  \\
1 & 4.30260260783056& 4.30260260783027  & $2.8\cdot 10^{-13}$ \\
2 &  7.39761752583498& 7.39761752583497  & $1.4\cdot 10^{-12}$\\
3 &  10.5317097032223& 10.5317097032191  & $3.5\cdot 10^{-12}$\\
5 &  16.8012911248982& 16.8012911248964  & $1.0\cdot 10^{-11}$\\
10 & 32.4978603143171& 32.4978603143055  & $1.2\cdot 10^{-11}$\\
25 & 76.4738498191705& 76.4738498206540  & $7.0\cdot 10^{-9}$\\
\hline
\end{tabular}
\caption{The eigenvalues of the problem \eqref{EqLinearPencil1} (Example \ref{ExLinearPencil1}).}
\label{TabLinearPencil1}
\end{table}
\end{example}

For the next example we considered the following boundary value problem
\begin{equation*}
    \begin{cases}
    \frac{\partial}{\partial s}\left(A(s)\frac{\partial u}{\partial s}\right)-\frac{\partial^2 u}{\partial t^2}-p(s)\frac{\partial u}{\partial t}=0,\\
    u(0,t)=0,\\
    \left.\frac{\partial u}{\partial s}\right|_{s=l}+\nu\left.\frac{\partial u}{\partial t}\right|_{s=l}+\mu \left.\frac{\partial^2 u}{\partial t^2}\right|_{s=l}=0,
    \end{cases}
\end{equation*}
describing small transverse vibrations of a string of stiffness $A(s)$ with a damping coefficient $p(s)>0$. Here $u(s,t)$ is the transverse displacement and $l>0$ is the length of the string. The left end of the string is fixed and the right end is equipped with a ring of mass $\mu>0$ moving in the direction orthogonal to the equilibrium position of the string. The damping coefficient of the ring is $\nu>0$. Similar problems were considered in various papers where theoretical results on direct and inverse problems were obtained, see, e.g., \cite{Jaulent1976}, \cite{Pivovarchik1999}, \cite{Pivovarchik2007}. Substituting $u(s,t)=v(\lambda,s)e^{i\lambda t}$ we obtain the system for the amplitude function $v(\lambda, s)$.
\begin{equation}\label{ExStringEq}
    \begin{cases}
    \bigl(A(s)v'(\lambda,s)\bigr)' +\lambda^2 v(\lambda,s)-ip(s)\lambda v(\lambda, s)=0,\\
    v(\lambda, 0)=0,\\
    v'(\lambda, l)+i\nu \lambda v(\lambda,l)-\mu\lambda^2 v(\lambda, l)=0.
    \end{cases}
\end{equation}
The equation in \eqref{ExStringEq} is of the type \eqref{pencil}. In the case of a constant $p(s)\equiv p$ the problem can be reduced to a Sturm-Liouville problem by a change of the spectral parameter, however for a non-constant damping $p(s)$ the equation should be solved as a pencil.

\begin{example}\label{ExString}
To be able to compare the approximate eigenvalues produced by the modified SPPS method with the exact ones we have chosen the following parameters: $A(s)\equiv 1$, $p(s)=s$, $\mu=\nu=1$ and $l=1$. For these parameters we were able to find with the help of Mathematica software the exact characteristic equation
\begin{multline}\label{ExStringCharEq}
    \frac{\pi}{\sqrt[3]{i \lambda
   }}  \left(\operatorname{Bi}\bigl((i \lambda
   )^{4/3}\bigr) \left(\lambda  (\lambda -i)
   \operatorname{Ai}\bigl((i \lambda +1) \sqrt[3]{i
   \lambda }\bigr)-\sqrt[3]{i \lambda }
   \operatorname{Ai}'\bigl((i \lambda +1) \sqrt[3]{i
   \lambda }\bigr)\right)+\right.\\
   \left.\operatorname{Ai}\bigl((i
   \lambda )^{4/3}\bigr) \left(\sqrt[3]{i
   \lambda } \operatorname{Bi}'\bigl((i \lambda +1)
   \sqrt[3]{i \lambda }\bigr)-\lambda
   (\lambda -i) \operatorname{Bi}\bigl((i \lambda +1)
   \sqrt[3]{i \lambda
   }\bigr)\right)\right)=0,
\end{multline}
where $\operatorname{Ai}(x)$ and $\operatorname{Bi}(x)$ are the Airy functions. In Table \ref{TabString1} we present the approximate eigenvalues produced by the modified SPPS method with $N=100$ and $M=10001$ and with the use of the spectral shift technique, the exact eigenvalues obtained from the characteristic equation \eqref{ExStringCharEq} with the help of Mathematica's function \texttt{FindRoot} and the absolute errors of the approximate eigenvalues compared to the exact ones. The eigenvalues are symmetric with respect to the imaginary axis, so we included only the eigenvalues with the positive real part. Note that our method allows one to obtain more eigenvalues, however Mathematica was unable to find more zeros of the characteristic equation.
\begin{table}
\centering
\begin{tabular}
{cccc}\hline
$n$ & $\lambda_n$ (our method) & $\lambda_{n}$ (exact) & Abs. error \\
\hline
1 & $0.724600759561354 + 0.465512975730082i$ & $0.724600759561355+ 0.465512975730082i$  & $1.1\cdot 10^{-15}$  \\
2 & $3.41348175703277 + 0.269073728680318i$ & $3.41348175703277+ 0.26907372868032i$  & $2.1\cdot 10^{-15}$ \\
3 &  $6.43085017426924 + 0.255763443512501i$ &  $6.43085017426926+ 0.255763443512497i$  & $2.4\cdot 10^{-14}$\\
4 &  $9.52497224975746 + 0.252665874553727i$ & $9.5249722497575+ 0.252665874553731i$  & $3.8\cdot 10^{-14}$\\
5 &  $12.6419970813013 + 0.251521276777511i$  & $12.6419970813014+ 0.251521276777512i$  & $4.8\cdot 10^{-14}$\\
7 &  $18.9002072286181 + 0.250683194824278i$ & $18.9002072286181+ 0.250683194824283i$  & $2.5\cdot 10^{-14}$\\
10 &  $28.3081715202515 + 0.250305060446283i$ & $28.3081715202511+ 0.250305060446279i$  & $3.4\cdot 10^{-13}$\\
15 &  $44.0040711901387 + 0.250126347925522i$ & $44.0040711901389 +     0.250126347925464i$  & $2.4\cdot 10^{-13}$\\
20 &  $59.7063095058408 + 0.250068647436092i$ & $59.7063095058413 +     0.250068647435942i$  & $5.8\cdot 10^{-13}$\\
\hline
\end{tabular}
\caption{The eigenvalues of the problem \eqref{ExStringEq} (Example \ref{ExString}).}
\label{TabString1}
\end{table}
\end{example}

\subsection{Spectral problems for Zakharov-Shabat systems}
Zakharov-Shabat systems arise in the application of the inverse scattering transform method to non-linear Schr\"{o}dinger equations, see, e.g., \cite{Ablowitz y Segur, Yang2010, Zakharov-Shabat}. In this subsection we follow definitions and results from the recent papers \cite{Kravchenko-Velasco, KTV}. We consider a generalized Zakharov-Shabat system
\begin{equation}\label{Eq ZS system}
    \begin{cases}
    v_1'=\lambda v_1+Pv_2,\\
    v_2'=-\lambda v_2-Qv_1,
    \end{cases}
\end{equation}
where $v_1$ and $v_2$ are unknown complex valued functions, $\lambda\in\mathbb{C}$ is a spectral parameter, $Q$ and $P$ are complex valued functions such that $Q$ does not vanish, $P$ is continuous and $Q$ is continuously differentiable on the domain of interest. Substituting $v_1=-\frac 1Q (v_2'+\lambda v_2)$ into the first equation in \eqref{Eq ZS system} we obtain an equation of the form
\begin{equation}
\left(\frac{1}{Q}v_{2}^{\prime}\right)^{\prime}+Pv_{2}=\lambda\frac{Q^{\prime}}{Q^{2}}v_{2}+\lambda^{2}\frac{1}{Q}v_{2}.\label{eq:ZS-pencil}
\end{equation}
Equation \eqref{eq:ZS-pencil} is of the form \eqref{pencil}, hence we can apply the results of Section \ref{Section3} to obtain the solution of the Zakharov-Shabat system.

Recall that the eigenvalue problem for the system \eqref{Eq ZS system} consists in finding such values of the spectral parameter $\lambda$ for which there exists a non-trivial Jost solution. In particular, when the potentials $Q$ and $P$ are compactly supported and non-vanishing on $[-a,a]$ (a situation which usually arises when truncating the infinitely supported and rapidly decreasing potentials) the eigenvalue problem reduces to finding such values of $\lambda$ (with $\operatorname{Re}\lambda >0$) for which there exists a solution of \eqref{Eq ZS system} on $(-a,a)$ satisfying the following boundary conditions (see, e.g., \cite{Kravchenko-Velasco})
\begin{align}
v_{1}\left(-a\right)&=1,\qquad  v_{2}\left(-a\right)=0,\label{eq: Jost -a}\\
v_{1}\left(a\right) &=0.\label{eq: Jost +a}
\end{align}

Let $f$ and $g$ be two particular solutions of \eqref{eq:ZS-pencil} for some $\lambda=\lambda_0$ satisfying the conditions of Theorem \ref{Th Modified SPPS copy(1)} and the solutions $u_1$ and $u_2$ be constructed by \eqref{u1u2PencilShifted} using $x_0=-a$ as the initial point in Definition \ref{Def Powers Pencil}. Then the general solution of \eqref{eq:ZS-pencil} has the form $v_2=c_1u_1+c_2u_2$ and it follows from \eqref{eq: Jost -a} and \eqref{initialPencil} that $c_1=0$, while from the boundary condition for the function $v_1=-\frac 1Q (v_2'+\lambda v_2)$ we obtain that $c_2=-1$. Hence due to \eqref{eq: Jost +a} the characteristic equation of the spectral problem reduces to
\[
0=v_1(a)=-\frac 1{Q(a)}\bigl(v_2'(a)+\lambda v_2(a)\bigr)=\frac 1{Q(a)}\bigl(u_2'(a)+\lambda u_2(a)\bigr).
\]
Multiplying both sides by $Q(a)$ we obtain that the eigenvalues of the spectral problem coincide with zeros of the characteristic function
\begin{equation}\label{eq:ZS char fun}
    \begin{split}
    \Phi(\lambda) &= \rho\sum_{n=0}^\infty (\lambda-\lambda_0)^n \bigl(g'(a) F_{2n}(a) - f'(a) G_{2n}(a)\bigr) + \lambda \sum_{n=0}^\infty (\lambda-\lambda_0)^n F_{2n+1}(a)\\
    &=\sum_{n=0}^\infty(\lambda-\lambda_0)^n\Bigl(\rho\bigl(g'(a)F_{2n}(a)-f'(a)G_{2n}(a)\bigr)+F_{2n-1}(a)+\lambda_0 F_{2n+1}(a)\Bigr).
    \end{split}
\end{equation}

\begin{example}\label{ExBronski}
Consider the following problem \cite{Bronski}
\begin{equation}\label{EqBronski}
    \begin{cases}
    i\varepsilon v'=q w + \lambda v,\\
    i\varepsilon w'=\bar q v - \lambda w,
    \end{cases}
\end{equation}
where the potential $q$ is given by
\[
q(x)=A(x)e^{i S(x)/\varepsilon},\qquad A(x)=S(x)=\operatorname{sech}(2x),
\]
$\bar q$ denotes the complex conjugate of $q$ and $\varepsilon$ is a small parameter. According to \cite{Bronski} the problem possesses a finite set of eigenvalues having a ``Y''-shape in the complex domain.

After division by $i\varepsilon$, \eqref{EqBronski} reduces to the Zakharov-Shabat system \eqref{Eq ZS system} with the spectral parameter $\widetilde\lambda = \lambda/i\varepsilon$. This problem was numerically solved in \cite[Example 4.10]{KTV} using machine-precision arithmetic by means of the original SPPS representation for several values of $\varepsilon\ge 0.063$. In \cite{Bronski} the graphs of the eigenvalues on the complex plane are presented for values of $\varepsilon$ as small as $0.023$. Such small values of $\varepsilon$ presented difficulties in \cite[Example 4.10]{KTV}. It was not possible to compute sufficiently many formal powers to obtain all the eigenvalues without using the spectral shift technique, the larger index formal powers became smaller than the smallest numbers in double precision. The spectral shift technique did not help either because of the rapid growth followed by the rapid decay of the particular solutions used for spectral shifts, similar difficulty as in the Coffey-Evans example \cite[Example 7.5]{KrT2013}. One possibility to overcome these difficulties in the framework of the original SPPS method consists in using arbitrary precision arithmetic. However even in this case the Clenshaw-Curtis integration formula allowed us to calculate only a few formal powers accurately, meanwhile the use of the Newton-Cottes integration formula led to elevated computational times.

The modified SPPS representation allowed us to overcome the main computation difficulty of the original SPPS representation --- nearly vanishing solutions. We truncated the potential to the segment $[-8,8]$ and computed two particular solutions of equation \eqref{eq:ZS-pencil} along with more than 2000 formal powers using the Clenshaw-Curtis integration formula. Such amount of formal powers is sufficient to obtain all eigenvalues of the problem \eqref{EqBronski} for all values of $\varepsilon$ reported in \cite{Bronski} directly from the truncated characteristic function \eqref{eq:ZS char fun}. We confirmed the smaller eigenvalues using the spectral shift method. For the larger eigenvalues the spectral shift method failed to produce reliable results with the parameters used because the particular solutions reveal a computationally difficult behavior, starting at 1 they grow to more than $10^{40}$ and than decay. All calculations were performed in Mathematica 8 using arbitrary precision arithmetic. On Figure \ref{ExBronskiEigenvalues} we present the graphs of the obtained eigenvalues for $\varepsilon=0.025$ and $\varepsilon=0.0223$, smallest values from \cite{Bronski}, and in Table \ref{TabBronski1} we present the approximate eigenvalues for $\varepsilon=0.025$.

\begin{figure}[h]
\centering
\includegraphics[
height=2.0in,
width=2.5in,
bb=0 0 180 144
]
{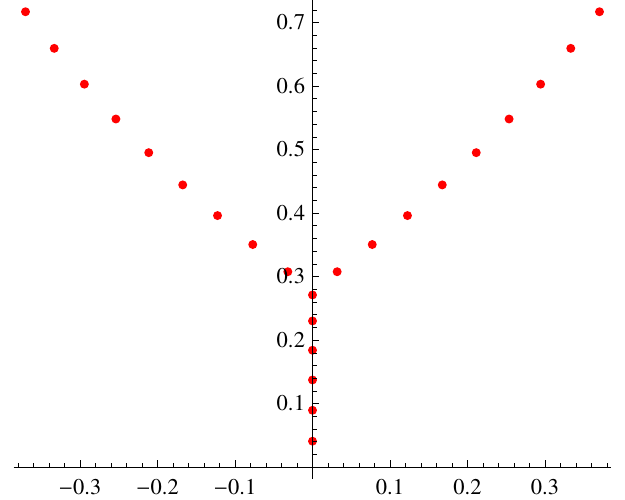}\quad
\includegraphics[
height=2.0in,
width=2.5in,
bb=0 0 180 144
]
{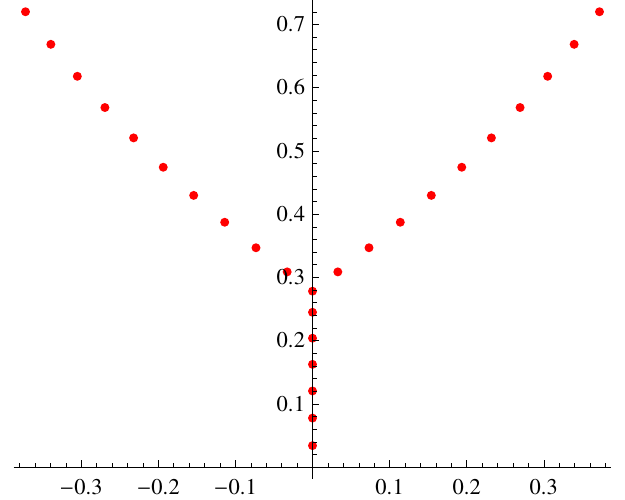}
\caption{Graphs of the eigenvalues of the problem \eqref{EqBronski} from  Example \ref{ExBronski} for $\varepsilon=0.025$ (on the left) and for $\varepsilon=0.0223$ (on the right).}\label{ExBronskiEigenvalues}
\end{figure}

\begin{table}
\centering
\begin{tabular}
{cc}\hline
$n$ & $\lambda_n$ (our method) \\
\hline
1 & $0.0407631404708347818i$\\
2 & $0.0894375732679124748i$\\
3 & $0.1371637819552007166i$\\
4 & $0.183974867838984848i$\\
5 & $0.230058316868567325i$\\
6 & $0.270992331790330981i$\\
7, 8 & $\pm 0.031821868157610443+0.307581835672991608i$\\
9, 10 & $\pm 0.077099873305919148 + 0.350414351766500910i$\\
11, 12 & $\pm 0.122549274277416703 + 0.396004265795121343i$\\
13, 14 & $\pm 0.167461129865269703 + 0.444273217295649703i$\\
15, 16 & $\pm 0.211303802258405430 + 0.495044232076272957i$\\
17, 18 & $\pm 0.253691811351361979 + 0.548028112642247410i$\\
19, 20 & $\pm 0.294386770221071665 + 0.602887000706132156i$\\
21, 22 & $\pm 0.333275636341542323 + 0.659282807236860406i$\\
23, 24 & $\pm 0.370339678528560140 + 0.716906655582204397i$\\
\hline
\end{tabular}
\caption{The eigenvalues of the problem \eqref{EqBronski} for $\varepsilon=0.025$ (Example \ref{ExBronski}).}
\label{TabBronski1}
\end{table}
\end{example}

\end{document}